\numberwithin{equation}{section}
\numberwithin{figure}{section}
\theoremstyle{plain}
\newtheorem{thm}{Theorem}[section]
\theoremstyle{definition}
\newtheorem{defn}[thm]{Definition}
\theoremstyle{plain}
\newtheorem{lem}[thm]{Lemma}
\theoremstyle{plain}
\theoremstyle{plain}
\newtheorem{prop}[thm]{Proposition}
\theoremstyle{remark}
\newtheorem{rem}[thm]{Remark}
\begin{document}
\title{Tangent Flows of K\"ahler Metric Flows}
\author{Max Hallgren and Wangjian Jian}
\maketitle

\begin{abstract} We improve the description of $\mathbb{F}$-limits of noncollapsed Ricci flows in the K\"ahler setting. In particular, the singular strata $\mathcal{S}^k$ of such metric flows satisfy $\mathcal{S}^{2j}=\mathcal{S}^{2j+1}$. We also prove an analogous result for quantitative strata, and show that any tangent flow admits a nontrivial one-parameter action by isometries, which is locally free on the cone link in the static case. The main results are established using parabolic regularizations of conjugate heat kernel potential functions based at almost-selfsimilar points, which may be of independent interest.
\end{abstract}

\section{Introduction}

Suppose $(M_{i}^{2n},g_{i},p_{i})$ is a sequence of pointed, complete
Riemannian manifolds satisfying 
\begin{align} \label{riclowerbound} 
Rc(g_{i})&\geq-(n-1)g_{i},\\
\text{Vol}(B(p_{i},1))&\geq\nu , \label{vollowerbound} \end{align}
where $\nu >0$. Assume moreover that the sequence $(M_{i},d_{g_{i}},p_{i})$ converges
in the pointed Gromov-Hausdorff sense to a metric space $(X,d,p_{\infty})$. It was shown in \cite{cheegercolding1} that any tangent cone based at a point in $X$ is a metric cone. The singular strata $\mathcal{S}^k$ were defined to be set of $x\in X$ such that no tangent cone based at $x$ is isometric to $C(Z)\times \mathbb{R}^{k+1}$, where $Z$ is any compact metric space with diameter at most $\pi$. Moreover, the Hausdorff dimension estimates $\text{dim}_{\mathcal{H}}(\mathcal{S}^k)\leq k$ were established. 

It is natural to ask what additional properties $X$ satisfies if $(M_i,g_i)$ are assumed to be K\"ahler. Theorem 9.1 of \cite{cheegercoldingtian} showed that in this case $\mathcal{S}^{2j}=\mathcal{S}^{2j+1}$
for $j=0,...,n-1$; roughly speaking, if a tangent cone $C(Y)$ of some $x\in X$ splits a factor of $\mathbb{R}^{2j+1}$, then it actually splits a factor of $\mathbb{R}^{2j+2}$. It was also shown in \cite{gangliucone} that any tangent cone $C(Y)$ of $X$ admits a 1-parameter action by isometries, which extends to an effective isometric action by a torus.  This action is used in \cite{szek2} to obtain an embedding $C(Y)\hookrightarrow \mathbb{C}^N$ whose image is a normal affine algebraic variety, such that the action on $C(Y)$ is the restriction of a linear torus action on $\mathbb{C}^N$. 

The goal of this paper is to prove analogous results in the setting of Ricci flow. A Ricci flow analogue of the singular stratification was first introduced for Ricci flows satisfying a Type-I curvature assumption in \cite{giansingset}. A version defined for general Ricci flows was later defined and studied in \cite{bamlergen3}. To state our results more precisely, we let $(M_{i}^{2n},(g_{i,t})_{t\in(-T_{i},0]})$
be any sequence of Ricci flows equipped with conjugate heat kernel measures $(\nu_{x_i,0;t})_{t\in (-T_i,0]}$ based at $(x_i,0)$, where $T_{\infty}:=\lim_{i\to\infty}T_{i}\in(0,\infty]$
and $\mathcal{N}_{x_{i},0}(1)\geq-Y$ for some $Y<\infty$. In \cite{bamlergen1,bamlergen2,bamlergen3}, Bamler establishes a Ricci flow version of Gromov's compactness theorem and Cheeger-Colding theory, where $\mathbb{F}$-convergence takes the role of pointed Gromov-Hausdorff, and the volume noncollapsing assumption is replaced by the assumed lower bound for Nash entropy. We will review related definitions in Section 2. After passing to a subsequence, we can suppose that 
\[
(M_{i}^{2n},(g_{i,t})_{t\in(-T_{i},0]},(\nu_{x_{i},0;t})_{t\in(-T_{i},0]})\xrightarrow[i\to\infty]{\mathbb{F}}(\mathcal{X},(\nu_{x_{\infty};t})_{t\in(-T_{\infty},0]})
\]
uniformly on compact time intervals, for some future-continuous metric flow
$\mathcal{X}$ of full support over $(T_{\infty},0]$. There is  stratification $\mathcal{S}^0 \subseteq \cdots \subseteq \mathcal{S}^{2n-2} = \mathcal{S}$ of the singular set $\mathcal{S}$ of $\mathcal{X}$ analogous to that of Ricci limit spaces (see Section 2 for details). Our first main theorem can then be stated as follows.

\begin{thm} \label{mainthm1} If $(M_i,(g_{i,t})_{t\in (-T_i ,0]})$ are K\"ahler-Ricci flows, then $\mathcal{S}^{2j+1}=\mathcal{S}^{2j}$ for $j=0,...,n-1$. \end{thm}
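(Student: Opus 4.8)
The plan is to follow the strategy of Cheeger--Colding--Tian adapted to the metric-flow setting, and to use the one-parameter isometric action furnished by the other main results of the paper. The statement to prove is the inclusion $\mathcal{S}^{2j+1}\subseteq\mathcal{S}^{2j}$ (the reverse is automatic from the definition of the stratification), equivalently: if a point $x\in\mathcal{X}$ admits a tangent flow that splits off an isometric factor of $\mathbb{R}^{2j+1}$, then it admits a tangent flow splitting off $\mathbb{R}^{2j+2}$. So let $x\in\mathcal{S}^{2j+1}$ and fix a tangent flow $\mathcal{X}_\infty$ at $x$; after passing to a further blow-up we may assume it is a metric soliton (a static or shrinking selfsimilar metric flow) which, by the Euclidean-splitting theory in the metric-flow setting, is isometric to a product $\mathcal{Z}\times\mathbb{R}^{2j+1}$ for some metric soliton $\mathcal{Z}$ on a lower-dimensional space. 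The goal is to show $\mathcal{Z}$ itself splits an $\mathbb{R}$ factor.

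First I would establish that the K\"ahler condition passes to the tangent flow in a usable form: the almost-K\"ahler structure on the approximating Ricci flows (a nearly parallel complex structure $J_i$ compatible with $g_{i,t}$) should converge, on the regular part $\mathcal{R}$ of $\mathcal{X}_\infty$, to a parallel complex structure $J_\infty$ making $(\mathcal{R},g_\infty,J_\infty)$ K\"ahler, with the Ricci-flow selfsimilar equations becoming the corresponding K\"ahler-soliton identities. This requires the $\epsilon$-regularity and the $W^{1,2}_{\mathrm{loc}}$/$C^\infty_{\mathrm{loc}}$ convergence theory on the regular part from \cite{bamlergen2,bamlergen3}; the key analytic input is that $|\nabla J_i|$ is controlled by curvature-type quantities that vanish in the limit on $\mathcal{R}$. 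Next, on the splitting $\mathcal{X}_\infty=\mathcal{Z}\times\mathbb{R}^{2j+1}$, the $2j+1$ parallel splitting vector fields $\partial_1,\dots,\partial_{2j+1}$ on $\mathcal{R}$ give, via $J_\infty$, $2j+1$ more parallel vector fields $J_\infty\partial_1,\dots,J_\infty\partial_{2j+1}$. Since the span of $\{\partial_a\}$ is odd-dimensional, at least one $J_\infty\partial_a$ is not contained in it; this produces an additional parallel vector field $V$ on $\mathcal{R}$, linearly independent from the $\partial_a$, which one must then upgrade to a global isometric $\mathbb{R}$-action (or a parallel splitting) of $\mathcal{X}_\infty$. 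Upgrading ``parallel on $\mathcal{R}$'' to ``genuine splitting of the metric flow'' is where the static/shrinking soliton structure and the estimates near $\mathcal{S}$ (density/Nash-entropy gap, the fact that $V$ has at most linear growth as a soliton-type vector field, and $\dim_{\mathcal H}\mathcal{S}\le 2n-2$ so $\mathcal{R}$ is connected and $V$ extends) are all needed; this is morally the $\mathbb{R}$-factor analogue of the torus-action statement and I expect to import the one-parameter isometric action result of the paper and combine it with the K\"ahler relation $V\perp\mathrm{span}\{\partial_a\}$.

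The main obstacle I anticipate is precisely this last upgrade step: showing that the extra parallel vector field $V$ obtained from $J_\infty$ on the regular part actually integrates to a metric splitting $\mathcal{X}_\infty\cong\mathcal{Z}'\times\mathbb{R}^{2j+2}$ (after possibly passing to a further tangent flow), rather than merely a Killing field or a more degenerate structure. In the smooth Cheeger--Colding setting one uses that a parallel vector field on an open dense set of a metric cone extends and splits; here the analogous statement for static/shrinking metric flows requires care because the ``cone link'' and the time-direction interact, and because $\mathcal{R}$ is only known to be a smooth manifold away from a codimension-$\geq 2$ set. I would handle this by working with the conjugate-heat-kernel potential $f_\infty$ of the soliton: $V$ being parallel and $g_\infty+\nabla^2 f_\infty=\tfrac{1}{2\tau}g_\infty$ forces $\nabla_V\nabla f_\infty$ to be controlled, so $V f_\infty$ is (affine-)linear, and the level-set/flow-of-$V$ argument then produces the splitting globally via Bamler's structure theory for selfsimilar metric flows. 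A secondary technical point is ensuring the blow-up procedure can be iterated so that at each stage we genuinely gain a full extra $\mathbb{R}^2$ (not just $\mathbb{R}$); this follows because once one $J_\infty\partial_a\notin\mathrm{span}\{\partial_b\}$, the K\"ahler relation forces $J_\infty$ to preserve the enlarged span, so the new splitting is by an even-dimensional Euclidean factor, and the induction on $j$ closes.
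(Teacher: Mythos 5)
Your proposal follows the qualitative Cheeger--Colding--Tian template (work directly on a tangent flow, use $J_\infty$ to produce an extra parallel vector field, upgrade to a splitting), and the key idea---use $J$ applied to the splitting directions, noting $Rc$ kills the splitting directions so $J\partial_a$ is again a gradient---is exactly the heuristic the paper starts from. But the paper does \emph{not} carry out this program on the limit. Instead it works entirely quantitatively on the smooth approximating K\"ahler--Ricci flows, and the reason is precisely the step you flag as the ``main obstacle.''

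Concretely: the paper replaces the limiting potential $f_\infty$ by a parabolic regularization $q$ of $4\tau(f-W)$ on the smooth approximants (Proposition~\ref{strongsoliton}), proves $L^p$ bounds on $Rc(\nabla y)$ for a strong almost-splitting map $y$ (Lemma~\ref{tech}), and shows that $z:=\tfrac12\langle\nabla q,J\nabla y\rangle$ is a weak almost-splitting function with $\nabla z\approx J\nabla y$ (Proposition~\ref{meat}). A linear-algebra lemma (Lemma~\ref{vectorspace}) then combines the $y_i$ with the $z_i$ to produce a weak $(2k+2,\epsilon,r)$-splitting map. Everything lives on the smooth flow, so no structure theorem for singular metric solitons is needed. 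To get Theorem~\ref{mainthm1}, the paper also needs the converse passage from split limits to almost-split approximants: this is Proposition~\ref{getsplit} and the equivalence of quantitative and weak quantitative strata (Proposition~\ref{equiv}), neither of which appears in your plan but both of which are genuinely necessary, because $\mathcal{S}^k$ is \emph{defined} via the weak quantitative strata $\widehat{\mathcal{S}}^{\epsilon,k}_{0,\epsilon}$, i.e.\ via smooth Ricci-flow approximants, not via tangent flows.

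The gap in your route is therefore exactly where you feared: going from ``$V=J_\infty\partial_a$ is a parallel unit vector field on $\mathcal{R}$'' to ``the metric soliton $\mathcal{X}_\infty$ splits an extra $\mathbb{R}$-factor.'' This is a structure theorem for singular metric solitons that is not available in \cite{bamlergen2,bamlergen3}, and the present paper deliberately avoids proving it. What you cite as a possible escape---the one-parameter isometric action from Theorem~\ref{mainthm3}---does not apply: that action is generated by $J\nabla f$, a genuine Killing (not parallel) field, and the completeness/extension argument in Proposition~\ref{action} is tailored to the infinitesimal heat-kernel symmetry of $J\nabla f$, not to the parallel fields $J\partial_a$. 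Even granting a version of that argument for $J\partial_a$ and the codimension-four singular-set estimate, one still needs the flow-by-isometries to be free and by translations, to descend the metric to a quotient, and to verify the resulting product is again a metric soliton in Bamler's sense, and then to feed this back into the quantitative definition of $\mathcal{S}^{2j+1}$ via something like Proposition~\ref{getsplit}. None of these steps is routine. The paper's quantitative formulation---Theorem~\ref{mainthm2} for quantitative strata, from which Theorem~\ref{mainthm1} follows by taking unions over $\epsilon$ and $A$---sidesteps all of them and also yields a stronger statement that applies to smooth flows.

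Two smaller inaccuracies: a parallel vector field of constant norm does not have ``at most linear growth''---its norm is literally constant; and the K\"ahler structure on the tangent flow is already established in the paper (Theorems~\ref{bamconvergence} and~\ref{tangentflows}), so that part of your plan is in fact available, but it is the least of the difficulties.
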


For applications to smooth Ricci flows, it is often useful to study the quantitative stratification. A similar stratification was 
studied for Riemannian manifolds satisfying (\ref{riclowerbound}),(\ref{vollowerbound}) in \cite{cheegernaberquant}, where it was used to prove $L^p$ estimates for the Riemannian curvature tensor of Einstein manifolds. The Ricci flow version was again first studied for Type-I Ricci flows \cite{gianquant}, while two different but related definitions for general Ricci flows were used in \cite{bamlergen3}. These are the quantitative strata $\mathcal{S}_{r_1,r_2}^{\epsilon,k}$ and the weak quantitative strata $\widehat{\mathcal{S}}_{r_1,r_2}^{\epsilon,k}$.  Our next result is a quantitative form of Theorem \ref{mainthm1}, from which Theorem \ref{mainthm1} is easily derived. We note that our definition of quantitative strata $\mathcal{S}_{r_1,r_2}^{\epsilon,k}$ is slightly more restrictive than the definition given in \cite{bamlergen3} (see Section 2).

\begin{thm} \label{mainthm2} For any $\epsilon >0$ and $Y,A<\infty$, there exists $\delta =\delta(\epsilon, Y,A)>0$ such that for all $r_2 > r_1 \geq 0$ and $j\in \{0,...,n-1\}$, we have
$$\mathcal{S}_{r_1,r_2}^{\epsilon,2j+1} \cap P^{\ast}(x_{\infty};A,-A^2) \subseteq \mathcal{S}_{r_1,r_2}^{\delta,2j}\cap P^{\ast}(x_{\infty};A,-A^2),$$
$$\widehat{\mathcal{S}}_{r_1,r_2}^{\epsilon,2j+1} \cap P^{\ast}(x_{\infty};A,-A^2) \subseteq \widehat{\mathcal{S}}_{r_1,r_2}^{\delta,2j} \cap P^{\ast}(x_{\infty};A,-A^2).$$
\end{thm}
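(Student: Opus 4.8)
The plan is to argue by contradiction and compactness, reducing the quantitative statement to the qualitative improvement of splitting that underlies Theorem \ref{mainthm1}. Suppose the first inclusion fails: then there exist $\epsilon>0$, $Y,A<\infty$, and sequences $\delta_i\to 0$, scales $r_{2,i}>r_{1,i}\geq 0$, indices $j_i\to j\in\{0,\dots,n-1\}$ (pass to a subsequence so $j_i$ is constant), and points $y_i\in \mathcal{S}^{\epsilon,2j+1}_{r_{1,i},r_{2,i}}\cap P^\ast(x_\infty;A,-A^2)$ with $y_i\notin \mathcal{S}^{\delta_i,2j}_{r_{1,i},r_{2,i}}$. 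The condition $y_i\notin\mathcal{S}^{\delta_i,2j}_{r_{1,i},r_{2,i}}$ means there is some scale $s_i\in[r_{1,i},r_{2,i}]$ at which the flow near $y_i$, parabolically rescaled by $s_i^{-1}$, is $\delta_i$-close (in the $\mathbb{F}$-sense on the relevant parabolic ball) to a metric flow that is $(2j,\delta_i)$-selfsimilar and $(2j,\delta_i)$-static, i.e. essentially splits $\mathbb{R}^{2j}$ off a static/shrinking model. I would then rescale by $s_i^{-1}$, center at $y_i$, and extract an $\mathbb{F}$-limit: the limiting metric flow $\mathcal{X}_\infty$ is a metric soliton that splits an $\mathbb{R}^{2j}$ factor off a static cone, hence is (up to the static/shrinking dichotomy) of the form $\mathbb{R}^{2j}\times C(Z)$ or a Gaussian times $\mathbb{R}^{2j}$; crucially, since the approximating smooth flows $M_i$ are Kähler-Ricci flows, this limit inherits a compatible complex structure on its regular part (the Kähler condition is preserved under $\mathbb{F}$-limits on the regular set, by the estimates in \cite{bamlergen2,bamlergen3}), so the $\mathbb{R}^{2j}$-splitting must be complex: the limit in fact splits $\mathbb{R}^{2j+2}$.

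The key step is then to \emph{upgrade} this to a statement about $y_i$ at a \emph{bounded, definite} range of scales. Having shown the limit is $(2j+2)$-selfsimilar and $(2j+2)$-static (in the appropriate sense; in the static case one gets the extra real direction from the complex structure acting on the soliton vector field / radial direction, as in \cite{cheegercoldingtian} and the Kähler cone analysis of \cite{gangliucone}), $\mathbb{F}$-closeness gives that for $i$ large, the rescaled flow near $y_i$ at scale comparable to $s_i$ is $\epsilon/2$-close to a $(2j+1,\epsilon/2)$-selfsimilar and $(2j+1,\epsilon/2)$-static model — contradicting $y_i\in\mathcal{S}^{\epsilon,2j+1}_{r_{1,i},r_{2,i}}$ provided $s_i$ lies in $[r_{1,i},r_{2,i}]$, which it does by construction. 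One must be slightly careful: the definitions of $\mathcal{S}^{\epsilon,k}_{r_1,r_2}$ quantify over \emph{all} scales in $[r_1,r_2]$, so membership in $\mathcal{S}^{\epsilon,2j+1}$ means no scale in that window is $(2j+1,\epsilon)$-selfsimilar+static; producing one such scale near $s_i$ is exactly the contradiction. The weak quantitative stratum $\widehat{\mathcal{S}}^{\epsilon,k}_{r_1,r_2}$ is handled identically, replacing the $\mathbb{F}$-closeness requirement on parabolic balls by the weaker a.e.-time version; the compactness argument is insensitive to this change.

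The main obstacle I anticipate is the static case and the precise bookkeeping of what "$(2j+2)$-static" should mean for a metric flow and how the Kähler structure forces the extra direction. In the shrinking (non-static) case the improvement is a clean consequence of the fact that a Kähler metric soliton splitting $\mathbb{R}^{2j}$ splits $\mathbb{R}^{2j+1}=\mathbb{R}^{2j}\times\mathbb{R}$ only if it already splits $\mathbb{R}^{2j+2}$, since the Euclidean de Rham factor of a Kähler manifold is even-dimensional — this is the soliton analogue of \cite[Theorem 9.1]{cheegercoldingtian}. In the static case one is dealing with a Ricci-flat Kähler cone $C(Y)$, and the statement $\mathcal{S}^{2j+1}=\mathcal{S}^{2j}$ amounts to: if $C(Y)$ splits $\mathbb{R}^{2j+1}$ then it splits $\mathbb{R}^{2j+2}$, which again follows because the maximal Euclidean factor of a Ricci-flat Kähler cone is a complex subspace (here the one-parameter isometric action furnished by the abstract's third result, analogous to \cite{gangliucone}, provides the extra Killing field). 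Assembling these two cases into a single quantitative compactness statement, and verifying that the smooth Kähler-Ricci flow hypothesis survives the rescaling and $\mathbb{F}$-limit at \emph{every} scale uniformly, is where the real work lies; the rest is the standard contradiction-compactness template for quantitative stratifications as in \cite{cheegernaberquant,bamlergen3}.
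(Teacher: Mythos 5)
Your high-level strategy — contradiction, parabolic rescaling at the bad scale $s_i$, extracting an $\mathbb{F}$-limit which is a K\"ahler metric soliton splitting $\mathbb{R}^{2j+1}$ (or a static cone splitting $\mathbb{R}^{2j-1}$), then arguing the K\"ahler structure forces an even-dimensional splitting — captures the heuristic stated in the paper's introduction, but it is not how the paper actually argues, and it contains a genuine gap at precisely the step that carries all the technical content.

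The gap is the claim that the $\mathbb{F}$-limit must split $\mathbb{R}^{2j+2}$ because ``the Euclidean de Rham factor of a K\"ahler manifold is even-dimensional.'' That fact uses the de Rham decomposition theorem, which requires a \emph{complete}, simply connected manifold with parallel $J$. The regular part $\mathcal{R}$ of a singular metric soliton is in general incomplete (and not simply connected), so the theorem does not apply. Even granting a splitting of the regular part, you would still have to split the singular set and show the flow is a metric flow pair of product form. In the static case you cite Theorem \ref{mainthm3} for the extra Killing field, but Theorem \ref{mainthm3} is proved in Section 6 using the very machinery (the strong almost-soliton potential $q$ of Proposition \ref{strongsoliton} and the almost-splitting function $z=\tfrac12\langle\nabla q,J\nabla y\rangle$ of Proposition \ref{meat}) that one is trying to avoid, so this route is circular. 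More fundamentally, the statement ``a singular K\"ahler metric soliton (obtained as an $\mathbb{F}$-limit of noncollapsed K\"ahler-Ricci flows) that splits $\mathbb{R}^{2j+1}$ actually splits $\mathbb{R}^{2j+2}$'' is essentially Theorem \ref{mainthm1} itself; invoking it as known to close the contradiction argument does not constitute a proof.

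By contrast, the paper's proof of Theorem \ref{mainthm2} never argues at the limit level. It is a concatenation of three inclusions: (a) $\mathcal{S}^{\epsilon,k}_{r_1,r_2} \subseteq \widehat{\mathcal{S}}^{\epsilon',k}_{r_1,r_2}$ on $P^{\ast}(x_\infty;A,-A^2)$ after pinning down a uniform Nash entropy lower bound there (Lemma 20.3 of \cite{bamlergen3} plus the oscillation estimate); (b) $\widehat{\mathcal{S}}^{\epsilon',2j+1}_{r_1,r_2} \subseteq \widehat{\mathcal{S}}^{\delta',2j}_{r_1,r_2}$, which is Proposition \ref{quantthm1} and is proved entirely on the smooth K\"ahler-Ricci flow approximants by constructing the new almost-splitting function $z_i=\tfrac12\langle\nabla q,J\nabla y_i\rangle$ and combining it with the $y_i$'s via Lemma \ref{vectorspace}; and (c) $\widehat{\mathcal{S}}^{\delta',2j}_{r_1,r_2} \subseteq \mathcal{S}^{\delta,2j}_{r_1,r_2}$, which is Proposition \ref{equiv} and is where the contradiction-compactness template (your proposal's main tool) actually appears — but only to translate between the two notions of quantitative strata, not to upgrade the splitting. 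The upgrade (b) is done quantitatively on the smooth flows exactly because one cannot run the de Rham argument on the singular limit. If you want to salvage your approach, you would need to first prove the limit-level splitting improvement directly, which would require essentially the same estimates for $q$ and $z$ on the regular part and a completeness/extension argument as in Section 6; there is no shortcut. Also note two smaller points: $(2j+1)$-symmetric means splitting $\mathbb{R}^{2j+1}$ (case $(b1)$) or static plus $\mathbb{R}^{2j-1}$ (case $(b2)$), not $\mathbb{R}^{2j}$; and the notions ``$(k,\epsilon)$-selfsimilar'' and ``$(k,\epsilon)$-static'' you use do not appear in the paper — what is meant is $(k,\epsilon,r)$-symmetric or $(k,\epsilon,r)$-split together with $(\epsilon,r)$-selfsimilar/static.
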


We observe that unlike Theorem \ref{mainthm1}, Theorem \ref{mainthm2} applies directly to smooth K\"ahler-Ricci flows, since the quantitative strata are generally nonempty even for smooth flows. 

Next, we consider a fixed point $x_0 \in \mathcal{X}_{t_0}$ with $t_0 <0$, and consider a sequence of parabolic rescalings $$(\mathcal{X}^{-t_0,\lambda_k},(\nu_{x_0;t}^{-t_0,\lambda_k})_{t\in [-\lambda_k^2 (t_0-T_{\infty}),0])}),$$ where $\lambda_k \nearrow \infty$. After passing to a subsequence, we can assume $\mathbb{F}$-convergence
$$(\mathcal{X}^{-t_0,\lambda_k},(\nu_{x_0;t}^{-t_0,\lambda_k})_{t\in [-\lambda_k^2 (t_0-T_{\infty}),0])}) \xrightarrow[i\to \infty]{\mathbb{F}} (\mathcal{Y},(\nu_{y_{\infty};t})_{t\in (-\infty,0]}),$$
where $\mathcal{X}^{-t_0,\lambda_k}$ is the metric flow $\mathcal{X}$ with a time translation by $-t_0$ and a parabolic rescaling by $\lambda_k$, and $\mathcal{Y}$ is a metric soliton modeled on a singular shrinking K\"ahler-Ricci soliton $(Y,d_Y,\mathcal{R}_Y,g_Y,f)$ with singularities of codimension four (see Section 2). We now give an analogue of Liu's construction \cite{gangliucone} of a 1-parameter isometric action in the setting of these singular solitons.

\begin{thm} \label{mainthm3} $(Y,d_Y)$ admits a nontrivial 1-parameter action by isometries $(\sigma_s)_{s\in \mathbb{R}}$ which preserves $\mathcal{R}_Y$. Moreover, the infinitesimal generator of the restriction to $\mathcal{R}_Y$ is $J\nabla f$. If in addition $Rc(g_Y)=0$, then $Y$ is a metric cone (by \cite{bamlergen3}), and the action restricted to the cone link is locally free.
\end{thm}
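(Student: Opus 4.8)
The plan is to construct the isometric action on the regular part $\mathcal R_Y$ first, using the Kähler structure, and then extend it to all of $(Y,d_Y)$ by the metric completeness and the codimension-four structure of the singularities. On $\mathcal R_Y$ we have a smooth shrinking Kähler-Ricci soliton with potential $f$, satisfying $Rc(g_Y) + \nabla^2 f = \tfrac12 g_Y$ and a compatible parallel complex structure $J$ (since $J$ is parallel for $g_Y$, it descends to the limit on the regular part). The natural candidate vector field is $X := J\nabla f$. The first step is to verify that $X$ is a Killing field on $(\mathcal R_Y, g_Y)$: this is the standard computation for Kähler-Ricci solitons — one has $\mathcal L_{\nabla f} g_Y = 2\nabla^2 f = g_Y - 2Rc(g_Y)$, and using that $\nabla^2 f$ and $Rc$ are both $J$-invariant $(1,1)$-tensors on a Kähler manifold, $\mathcal L_{J\nabla f} g_Y = 0$; equivalently, $J\nabla f = \nabla^{1,0}$-type of $f$ generates a holomorphic isometry. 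The second step is completeness of the flow of $X$: since $|X|_{g_Y} = |\nabla f|_{g_Y}$ and $f$ grows quadratically (by the soliton identity $|\nabla f|^2 + \mathcal R_Y \,\text{(scalar)} = f$ up to constants, with $f \sim \tfrac14 d_Y(\cdot, y_\infty)^2$), the field $X$ is complete away from the singular set, and one must check the flow does not escape to the singular set in finite time — here the codimension-four condition on $\mathcal S = Y \setminus \mathcal R_Y$ is used, together with the fact that $X$ is tangent to level sets of $f$ and to the singular set (since both $f$ and the singular set are $J$-invariant and $X = J\nabla f$ is orthogonal to $\nabla f$).

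The third step is to extend $(\sigma_s)$ from $\mathcal R_Y$ to $(Y, d_Y)$. Since $\sigma_s$ is an isometry of $(\mathcal R_Y, g_Y)$ and $d_Y$ is the length metric induced by $g_Y$ on the dense open set $\mathcal R_Y$ (with $\mathcal R_Y$ geodesically convex up to the singular set of codimension $\geq 4$), each $\sigma_s$ is a $d_Y$-isometry of the dense subset $\mathcal R_Y$ and hence extends uniquely to a $d_Y$-isometry of the completion, which is $(Y,d_Y)$. The group law and continuity in $s$ pass to the completion by density. Nontriviality: if $\sigma_s = \mathrm{id}$ for all $s$ then $X \equiv 0$ on $\mathcal R_Y$, forcing $\nabla f \equiv 0$, hence $f$ constant, contradicting the quadratic growth of $f$ on a noncompact soliton (the metric soliton $\mathcal Y$ is noncompact as an $\mathbb F$-limit of rescalings, so $f$ is unbounded). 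This gives the first two assertions.

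For the final assertion, assume $Rc(g_Y) \equiv 0$. Then the soliton identity reduces to $\nabla^2 f = \tfrac12 g_Y$ on $\mathcal R_Y$, so by \cite{bamlergen3} $(Y, d_Y)$ is a metric cone $C(L)$ with cone radius $\rho = \sqrt{2f}$ and the link $L = \{f = \tfrac12\}$ (a cross-section), and the radial field $\nabla f = \tfrac12 \rho \,\partial_\rho$ generates the dilation semigroup. Since $X = J\nabla f = \tfrac12\rho\, J\partial_\rho$ is orthogonal to $\partial_\rho$ and has length $\tfrac12\rho$, it is tangent to the links $\{\rho = \text{const}\}$; restricting to $L = \{\rho = 1\}$ gives a vector field $\bar X = J\partial_\rho|_L$ of unit length (up to the factor), generating the action $(\bar\sigma_s)$ on $L$. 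Local freeness is then equivalent to $\bar X$ being nowhere zero on $L \cap \mathcal R_Y$, which holds since $|X| = \tfrac12\rho \neq 0$ there; on the singular part of $L$ one argues that a fixed point of the full $\mathbb R$-action would force, via the cone structure and the $J$-invariance of the singular set, the stabilizer to be discrete — equivalently, $\bar X$ extends as a nonvanishing field in the sense that the orbits near the singular set are not collapsed to points, using again that $\mathcal S$ has codimension $\geq 4$ in $Y$ so has codimension $\geq 3$ in $L$ and cannot support the (at least 1-dimensional) fixed locus of a nontrivial $\mathbb R$-action whose generator is nonzero on the dense regular part.

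The main obstacle I expect is the third step — the regularity and extension argument across the singular set: one needs that the flow of $X = J\nabla f$ genuinely stays in $\mathcal R_Y$ for all time (rather than hitting $\mathcal S$), and that the extension to $(Y,d_Y)$ is by honest isometries of the metric space with the stated infinitesimal generator. This is where the codimension-four estimate for the singularities of the soliton, the properness of $f$, and the Kähler identities must be combined carefully; the purely formal computation that $J\nabla f$ is Killing is routine by comparison.
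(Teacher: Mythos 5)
Your first step---the computation that $J\nabla f$ is a real-holomorphic Killing field on the regular part of a shrinking K\"ahler-Ricci soliton---matches the paper's opening of Section 6 exactly, and the extension to $(Y,d_Y)$ via completion is also essentially what the paper does. But there are two genuine gaps, both at precisely the points you flag as ``the main obstacle.''

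\textbf{Completeness of the flow.} Your argument that $X=J\nabla f$ ``is tangent to level sets of $f$ and to the singular set (since both $f$ and the singular set are $J$-invariant and $X=J\nabla f$ is orthogonal to $\nabla f$)'' does not go through. Orthogonality to $\nabla f$ only shows tangency to level sets of $f$, not to the singular set $\mathcal{S}_Y=Y\setminus\mathcal{R}_Y$, which is in general \emph{not} a union of level sets of $f$. And there is no a priori reason the singular set is $J$-invariant---that claim is essentially equivalent to what you are trying to prove. Even granting tangency to a level set of $f$, nothing prevents a flow line from hitting the (codimension $\geq 4$) singular portion of that level set in finite time. The paper's Proposition 6.1 takes a completely different route: it first derives an \emph{infinitesimal symmetry of the conjugate heat kernel} under $J\nabla q$ by passing to the limit from the almost-GRS potentials of Section 4, then shows via a cutoff argument (using Lemma 2.11, the codimension-four bound $\int|\nabla\eta_r|\leq Cr^2$, and the heat-kernel Gaussian estimate of \cite{bamlergen3}) that the quantity $\int_{t_0}^{t_0'}\int_{\mathcal{R}_t}f_s e^{-f_s}\,dg_t\,dt$ is constant along the flow, and finally invokes Theorem 14.54(b) of \cite{bamlergen3} to conclude that every integral curve extends for all time. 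The codimension-four condition enters here, but through an integral estimate on $|\nabla\eta_r|$, not through a pointwise avoidance argument.

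\textbf{Local freeness.} Your claim that $\mathcal{S}$ having codimension $\geq 3$ in the link ``cannot support the (at least 1-dimensional) fixed locus of a nontrivial $\mathbb{R}$-action'' is incorrect: the fixed-point set of a nontrivial isometric $\mathbb{R}$-action can be zero-dimensional (a single point), which a codimension-three set can certainly contain, so dimension counting alone fails. The paper's Proposition 6.2 instead argues by contradiction using a blow-up: if $z\in\partial B(o,1)$ were fixed, pass to the tangent flow at $(z,-1)$, which is a static cone $C(Z)$ splitting a line. The rescaled $J\nabla q_i$ converges to a vector field $V_\infty$ on $C(Z)$ whose flow preserves $d(o_Z,\cdot)$ (because $z$ is fixed), while Proposition 5.2 produces strong almost-splitting maps $y_i^\beta$ with $\nabla y_i^\beta\approx J\nabla q_i/(2\sqrt{a_i})$, so that $V_\infty=\nabla y_\infty$ for the limiting splitting function. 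But $\nabla y_\infty$ is a complete parallel field on $C(Z)$ whose flow leaves every compact set, contradicting the fact that $V_\infty$ preserves balls about $o_Z$. This blow-up mechanism---and its reliance on the almost-splitting machinery of Sections 4--5---is the content you need and is not recoverable from the codimension bound alone.
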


Taking the closure of this 1-parameter subgroup would induce a faithful action of a torus on $Y$, if we knew that the isometry group of $(Y,d_Y)$ is a Lie group -- this holds for Ricci limit spaces by \cite{cheegercolding2,coldingnaber}, and it is likely the arguments can be extended to certain $\mathbb{F}$-limits of Ricci flows (c.f. Remark 2.7 of \cite{bamlergen3}). However, it is currently uncertain whether the arguments of \cite{szek2} can be adapted to show that every tangent cone of a K\"ahler-Ricci flow is an affine variety. This is because the proof of Lemma 2.2 in \cite{szek2} relied on sharp estimates (see \cite{jiangnaberl2}) for the size of singular sets of Gromov-Hausdorff limits of manifolds satisfying (\ref{vollowerbound}) and a two-sided Ricci curvature bound. The analogous estimates for $\mathbb{F}$-limits of noncollapsed Ricci flows are so far unavailable.

The basic idea for proving Theorems \ref{mainthm1} and \ref{mainthm2} is similar to that in the case of Ricci-limit spaces \cite{cheegercoldingtian}. We first consider the model case where $(M^{2n},g,J,f)$ is a smooth, complete gradient K\"ahler-Ricci soliton which isometrically splits a factor of $\mathbb{R}$: $(M,g,f)=(M'\times \mathbb{R},g'+dy^2,f'+\frac{y^2}{4})$, where $(M',g',f')$ is a gradient Ricci soliton of dimension $2n-1$. Then $z:=2 \langle \nabla f , J\nabla y\rangle$ satisfies 
$$\nabla z =J\nabla y -2 Rc(J\nabla y) = J\nabla y,$$
since $Rc(J\nabla y)=JRc(\nabla y)=0$. In particular, $\nabla z$ is parallel and pointwise orthogonal to $\nabla y$. It follows that $z$ is a coordinate for another factor of $\mathbb{R}$ split by $(M,g)$.

We now outline the steps we will take to implement this idea in the singular setting:
\begin{itemize}
    \item We show that any point $y\in \mathcal{X}$ close in $\mathbb{F}$-distance to a metric soliton which either splits $\mathbb{R}^{2k+1}$ or is static and splits a factor of $\mathbb{R}^{2k-1}$ is well-approximated by sequences of points $(y_i,t_i)\in M_i \times (-T_i,0]$ which are $(\epsilon'(\epsilon),r_i)$-selfsimilar for some $r_i \in [r_1,r_2]$, and either $(2k+1,\epsilon'(\epsilon),r_i)$-split, or else $(\epsilon'(\epsilon),r_i)$-static and $(2k-1,\epsilon'(\epsilon),r_i)$-split, where $\lim_{\epsilon\to 0}\epsilon'(\epsilon)=0$. The converse was shown in \cite{bamlergen3}, so it suffices to consider only the weak quantitative strata.
    \item We construct a parabolic regularization $q$ associated to the conjugate heat kernel based at any almost-selfsimilar point $(x_0,t_0)$, which satisfies estimates similar to $4\tau(f-W)$, where $f$ is the potential function for a shrinking GRS.  
    \item Given a strong $(2k+1,\epsilon,r)$-splitting map $y=(y_1,...,y_{2k+1})$ based at an almost-selfsimilar point $(x_0,t_0)$, we show that the functions $$z_i := \frac{1}{2}\langle \nabla q, J\nabla y_i\rangle$$ are almost-splitting functions whose gradients are almost-orthogonal to those of $y_i$, and use appropriate linear combinations of these functions and $y_i$ to conclude that $(x_0,t_0)$ is $(2k+2,\epsilon'(\epsilon),r)$-split. 
    \item The previous step gives $\widehat{\mathcal{S}}_{r_1,r_2}^{\epsilon'(\epsilon),2k+1}\subseteq \widehat{\mathcal{S}}_{r_1,r_2}^{\epsilon,2k}$, hence
    $$\mathcal{S}^{2k+1}=\cup_{\epsilon \in (0,1)} \widehat{\mathcal{S}}_{0,\epsilon}^{\epsilon'(\epsilon),2k+1}\subseteq \cup_{\epsilon \in (0,1)} \widehat{\mathcal{S}}_{0,\epsilon}^{\epsilon,2k}=\mathcal{S}^{2k}.$$
\end{itemize}

In Section 2, we review definitions from \cite{bamlergen2,bamlergen3} relevant to our methods and results. In Section 3, we show that if a limiting metric soliton isometrically splits a factor of $\mathbb{R}^k$, this can be used to find almost-split points in the approximating Ricci flows. In Section 4, we construct a parabolic regularization of approximate Ricci soliton potentials. In Section 5, we use these regularizations to construct almost-splitting maps on K\"ahler-Ricci flows, and finish the proof of Theorems \ref{mainthm1} and \ref{mainthm2}. In Section 6, we construct isometric actions on tangent flows, and prove Theorem \ref{mainthm3}.

\subsection{Acknowledgements}

The authors would like to thank Xiaodong Cao and Jian Song for helpful discussions.

\section{Preliminaries and Notation}

Throughout this paper, we use the following notation convention of Cheeger-Colding theory (c.f. \cite{cheegercoldingwarped}): we let $\Psi(a_1 ,...,a_k|b_1,...,b_{\ell})$ denote a quantity depending on parameters $a_1,...,a_k,b_1,...,b_{\ell}$, which satisfies
$$\lim_{(a_1,...,a_k)\to (0,...,0)} \Psi(a_1,...,a_k|b_1,...,b_{\ell})=0$$
for any fixed $b_1,...,b_{\ell}$. We also adhere to the following convention in \cite{bamlerkleiner}: if we say that a proposition $P(\epsilon)$ depending on a parameter $\epsilon$ holds if $\epsilon \leq \overline{\epsilon}(b_1,...,b_{\ell})$, this means there exists a constant $\overline{\epsilon}$ depending on parameters $b_1,...,b_{\ell}$ such that $P(\epsilon)$ holds whenever $\epsilon \in (0,\overline{\epsilon}]$. The notation $E \geq \underline{E}(b_1,...,b_{\ell})$ is defined analogously. We also let $\mathcal{P}(X)$ denote the space of Borel probability measures on a metric space $X$.

The parabolic analogue of a metric space, defined in \cite{bamlergen2}, is that of a metric flow; metric flow pairs play the role of pointed metric spaces. The definition of a metric flow relies on an auxiliary function $\Phi(x):=\int_{-\infty}^x \frac{1}{\sqrt{4\pi}}e^{-\frac{y^2}{4}}dy$. 

\begin{defn}[Metric Flow Pairs, Definitions 3.2, 5.1 in \cite{bamlergen2}] A metric flow over $I\subseteq \mathbb{R}$ is a tuple $(\mathcal{X},\mathfrak{t},(d_t)_{t\in I},(\nu_{x;s})_{x\in \mathcal{X},s \in I\cap (-\infty,\mathfrak{t}(x)]})$, where $\mathcal{X}$ is a set, $\mathfrak{t}:\mathcal{X}\to I$ is a function, $d_t$ are metrics on the level sets $\mathcal{X}_t:=\mathfrak{t}^{-1}(t)$, and $\nu_{x;s}\in \mathcal{P}(\mathcal{X}_s)$, $s\leq \mathfrak{t}(x)$ are such that $\nu_{x;\mathfrak{t}(x)}=\delta_x$ and the following hold:\\
$(i)$ (Gradient estimate for heat flows) For $s,t\in I$, $s<t$, $T\geq 0$, if $u_s:\mathcal{X}_s\to [0,1]$ is such that $\Phi^{-1}\circ u_s$ is $T^{-\frac{1}{2}}$-Lipschitz (or just measurable if $T=0$), then either $u_t:\mathcal{X}_t\to [0,1]$, $x\mapsto \int_{\mathcal{X}_s}u_s d\nu_{x;s}$, is constant or $\Phi^{-1}\circ u_t$ is $(T+t-s)^{-\frac{1}{2}}$-Lipschitz,\\
$(ii)$ (Reproduction formula) For $t_1 \leq t_2 \leq t_3$ in $I$, $\nu_{x;t_1}(E)=\int_{\mathcal{X}_{t_2}}\nu_{y;t_1}(E)d\nu_{x;t_2}(y)$ for $x\in \mathcal{X}_{t_3}$ and all Borel sets $E\subseteq \mathcal{X}_{t_1}$.\\
A conjugate heat flow on $\mathcal{X}$ is a family $\mu_t \in \mathcal{P}(\mathcal{X}_t)$, $t\in I'$, such that for $s\leq t$ in $I'$, we have $\mu_s(E)=\int_{\mathcal{X}_t} \nu_{x;s}(E) d\mu_t(x)$ for any Borel subset $E\subseteq \mathcal{X}_s$. A metric flow pair $(\mathcal{X},(\mu_t)_{t\in I'})$ consists of a metric flow $\mathcal{X}$, along with a conjugate heat flow $(\mu_t)_{t\in I'}$ such that $\text{supp}(\mu_t)=\mathcal{X}_t$ and $|I\setminus I'|=0$.
\end{defn}

The parabolic analogue of pointed Gromov-Hausdorff convergence is replaced with $\mathbb{F}$-convergence, also introduced in \cite{bamlergen2}.

\begin{defn}[Correspondences and $\mathbb{F}$-Distance, Definitions 5.4, 5.6 in \cite{bamlergen2}]
Given metric flows $(\mathcal{X}^i)_{i\in \mathcal{I}}$ defined over $I'^{,i}$, a correspondence over $I''\subseteq \mathbb{R}$ is a pair 
$$\mathfrak{C}=\left( (Z_t,d_t)_{t\in I''},(\varphi_t^i)_{t\in I''^{,i},i\in \mathcal{I}}\right)$$
where $(Z_t,d_t^Z)$ are metric spaces, $I''^{,i}\subseteq I'^{,i}\cap I''$, and $\varphi_t^i:(\mathcal{X}_t^i,d_t^i)\to (Z_t,d_t^Z)$ are isometric embeddings. The $\mathbb{F}$-distance between metric flow pairs $(\mathcal{X}^j,(\mu_t^j)_{t\in I'^{,j}})$, $j=1,2$, within $\mathfrak{C}$ is the infimum of $r>0$ such that there exists a measurable set $E\subseteq I''$ such that $I''\setminus E\subseteq I''^{,1}\cap I''^{,2}$, $|E|\leq r^2$, and there exist couplings $q_t$ of $(\mu_t^1,\mu_t^2)$, $t\in I''\setminus E$, such that for all $s,t\in I''\setminus E$ with $s\leq t$, we have
$$\int_{\mathcal{X}_t^1 \times \mathcal{X}_t^2}d_{W_1}^{Z_s}\left( (\varphi_s^1)_{\ast}\nu_{x^1;s}^1 , (\varphi_s^2)_{\ast}\nu_{x^2;s}^2 \right) dq_t(x^1,x^2) \leq r.$$
The $\mathbb{F}$-distance between metric flow pairs is the infimum of $\mathbb{F}$-distances within a correspondence $\mathfrak{C}$, where $\mathfrak{C}$ is varied among all correspondences. 
\end{defn}

For the next definition, we suppose $(\mathcal{X}^i,(\mu_t^i)_{t\in I'^{,i}})$ $\mathbb{F}$-converge to $(\mathcal{X}^{\infty},(\mu_t^{\infty})_{t\in I'^{\infty}})$ within the correspondence $\mathfrak{C}$. 
\begin{defn}[Convergence within a correspondence, Definition 6.18 in \cite{bamlergen2}] Given $\mu^i \in \mathcal{P}(\mathcal{X}_{t_i}^i)$ and $\mu^{\infty}\in \mathcal{P}(\mathcal{X}_{t_{\infty}}^{\infty})$, we write $\mu^i \xrightarrow[i\to \infty]{\mathfrak{C}} \mu^{\infty}$ if $t_i \to t_{\infty}$ and there exist $E_i \subseteq I''$ such that $|I''\setminus E_i|\to 0$, $E_i \subseteq I''$ and 
$$\lim_{i\to \infty}\sup_{t\in I''\setminus E} d_W^{Z_t}\left( (\varphi_t^i)_{\ast} \mu_t^i , (\varphi_t^{\infty})_{\ast} \mu_t^{\infty} \right)=0,$$
where $\mu_t^i$ is the conjugate heat flow on $\mathcal{X}^i$ with $\mu_{t_i}^i=\mu^i$, for $i\in \mathbb{N}\cup \{ \infty\}$. We write $x_i \xrightarrow[i \to \infty]{\mathfrak{C}} x_{\infty}$ if $\delta_{x_i} \xrightarrow[i\to \infty]{\mathfrak{C}} \delta_{x_{\infty}}$.
\end{defn}

Next, we recall Kleiner-Lott's notion of a Ricci flow spacetime; it was shown in \cite{bamlergen2,bamlergen3} that the regular part of a metric flow obtained as a limit of Ricci flows possesses this structure.

\begin{defn}[Ricci Flow Spacetime, Definition 1.2 in \cite{klott1}] A Ricci flow spacetime is a tuple $(\mathcal{M},\mathfrak{t},\partial_{\mathfrak{t}},g)$ consisting of a manifold $\mathcal{M}$, a time function $\mathfrak{t}:\mathcal{M}\to \mathbb{R}$, a "time-like" vector field $\partial_{\mathfrak{t}}\in \mathfrak{X}(\mathcal{M})$ with $\partial_{\mathfrak{t}}\mathfrak{t}=1$, and a bundle metric $g$ on the subbundle $\ker(d\mathfrak{t})\subseteq T\mathcal{M}$ satisfying $\mathcal{L}_{\partial_{\mathfrak{t}}}g=-2Rc(g)$, where $Rc(g)|\mathfrak{t}^{-1}(t)$ is defined to be the Ricci curvature of $g|\mathfrak{t}^{-1}(t)$. We write $\mathcal{M}_t := \mathfrak{t}^{-1}(t)$ and $g_t :=g|\mathcal{M}_t$.
\end{defn}

Given a Ricci flow $(M,(g_t)_{t\in I})$ and some $(x,t)\in M\times I$, we let $K(x,t;\cdot,\cdot):M\times (I\cap (-\infty,t))\to(0,\infty)$ denote the conjugate heat kernel based at $(x,t)$, and define $d\nu_{x,t;s}:=K(x,t;\cdot,s)dg_s \in \mathcal{P}(M)$. We now summarize some of the main points of Bamler's weak compactness and partial regularity theory. The notation in this statement will be used throughout the remainder of the paper.

\begin{thm}[c.f. Theorems 7.6, 9.12, 9.31 in \cite{bamlergen2}] \label{bamconvergence} Suppose $(M_i^n,(g_{i,t})_{t\in (-T_i,0]},(x_i,0))$ is a sequence of pointed Ricci flows satisfying $\mathcal{N}_{x_i,0}(1)\geq -Y$ for some $Y<\infty$. Then we can pass to a subsequence to obtain a future-continuous metric flow pair $(\mathcal{X},(\nu_{x_{\infty},t})_{t\in (-T,0]})$ along with a correspondence $\mathfrak{C}$ such that we have the following $\mathbb{F}$-convergence within the correspondence on compact time intervals:
$$(M_i^n,(g_{i,t})_{t\in (-T_i,0]},(\nu_{x_i,0;t})_{t\in (-T_i,0]})\xrightarrow[i\to \infty]{\mathbb{F},\mathfrak{C}} (\mathcal{X},(\nu_{x_{\infty},t})_{t\in (-T,0]}).$$
Moreover, there is an open, dense subset $\mathcal{R}\subseteq \mathcal{X}$ (with respect to the natural topology defined in Section 3 of \cite{bamlergen2}) which admits the structure of a Ricci flow spacetime $(\mathcal{R},\mathfrak{t},\partial_{\mathfrak{t}},g)$, where $\mathfrak{t}$ is the restricted function from the metric flow structure, and each $(\mathcal{X}_t,d_t)$ is the completion of the Riemannian length metric on $(\mathcal{R}_t,d_{g_t})$. In addition, the subbundle $\ker(d\mathfrak{t})\subseteq T\mathcal{R}$ admits an endomorphism $J$ satisfying $\mathcal{L}_{\partial_{\mathfrak{t}}}J=0$ and restricting to an almost-complex structure $J_t$ on each $\mathcal{R}_t$ such that each $(\mathcal{R}_t,g_t,J_t)$ a K\"ahler manifold, and there is an increasing exhaustion $(U_i)$ of $\mathcal{R}$ by precompact open sets along with time-preserving diffeomorphisms $\psi_i :U_i \to M_i$ such that the following hold:\\
$(i)$ $\psi_i^{\ast}g_i \to g$ in $C_{loc}^{\infty}(\mathcal{R})$,\\
$(ii)$ $(\psi_i^{-1})_{\ast}\partial_t \to \partial_{\mathfrak{t}}$ in $C_{loc}^{\infty}(\mathcal{R})$,\\
$(iii)$ If $J_i \in \text{End}(TM_i)$ denote the given complex structures, then $\psi_i^{\ast}J_i \to J$ in $C_{loc}^{\infty}(\mathcal{R})$,\\
$(iv)$ If we write $d\nu_{x_{\infty},t} = v_t dg_t$ on $\mathcal{R}$, then $\psi_i^{\ast}K(x_i,0;\cdot,\cdot)\to v$ in $C_{loc}^{\infty}(\mathcal{R})$.
\end{thm}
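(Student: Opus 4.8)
The plan is to obtain all of the assertions except item $(iii)$ and the stated properties of $J$ directly from Bamler's weak compactness and partial regularity theory. Specifically, applying Theorems 7.6, 9.12 and 9.31 of \cite{bamlergen2} to the underlying Riemannian Ricci flows $(M_i^{2n},(g_{i,t})_{t\in(-T_i,0]})$ with conjugate heat kernels based at $(x_i,0)$ produces, after passing to a subsequence, the $\mathbb{F}$-convergence within a correspondence, the open dense regular part $\mathcal{R}\subseteq\mathcal{X}$ with its Ricci flow spacetime structure $(\mathcal{R},\mathfrak{t},\partial_{\mathfrak{t}},g)$, the identification of $(\mathcal{X}_t,d_t)$ with the completed length metric of $(\mathcal{R}_t,d_{g_t})$, the exhaustion $(U_i)$ by precompact open sets, the time-preserving diffeomorphisms $\psi_i:U_i\to M_i$, and the smooth local convergences $(i)$, $(ii)$, $(iv)$. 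Thus the only new content to be established is item $(iii)$ together with $\mathcal{L}_{\partial_{\mathfrak{t}}}J=0$, the compatibility of $J$ with $g$, and the K\"ahlerity of the time slices.

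For this, the first step is to record the relevant properties of the complex structures $J_i$ on the approximating flows. Since K\"ahler--Ricci flow fixes the underlying complex structure, when $M_i\times(-T_i,0]$ is viewed as a Ricci flow spacetime with $\partial_{\mathfrak{t}}=\partial_t$ one has $\mathcal{L}_{\partial_t}J_i=0$; moreover on each time slice $(M_i,g_{i,t},J_i)$ is K\"ahler, so $J_i^2=-\mathrm{id}$, $g_{i,t}(J_i\cdot,J_i\cdot)=g_{i,t}$, and $\nabla^{g_{i,t}}J_i=0$. Pulling back by $\psi_i$, the tensors $A_i:=\psi_i^{\ast}J_i$ are sections of $\mathrm{End}(\ker d\mathfrak{t})$ over $U_i$ satisfying $A_i^2=-\mathrm{id}$, $(\psi_i^{\ast}g_i)(A_i\cdot,A_i\cdot)=\psi_i^{\ast}g_i$, $\nabla^{\psi_i^{\ast}g_i}A_i=0$, and $\mathcal{L}_{(\psi_i^{-1})_{\ast}\partial_t}A_i=0$.

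The next step is a standard compactness argument to extract a limit. Working in a fixed coordinate chart on $\mathcal{R}$, compatibility with $\psi_i^{\ast}g_i$ forces the pointwise norm $|A_i|_{\psi_i^{\ast}g_i}^2=2n$ to be constant, and since $\psi_i^{\ast}g_i\to g$ in $C^\infty_{loc}(\mathcal{R})$ by $(i)$ this gives a uniform $C^0_{loc}$ bound on $A_i$; then the parallel equation, which reads schematically $\partial A_i=-\Gamma[\psi_i^{\ast}g_i]\ast A_i$, combined with the $C^\infty_{loc}$ convergence of the Christoffel symbols of $\psi_i^{\ast}g_i$, yields by bootstrapping uniform $C^k_{loc}$ bounds on $A_i$ for every $k$ (valid on each compact subset of $\mathcal{R}$, since such a set eventually lies in $U_i$). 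By Arzel\`a--Ascoli and a diagonal argument over the exhaustion $(U_i)$, after passing to a further subsequence $A_i=\psi_i^{\ast}J_i\to J$ in $C^\infty_{loc}(\mathcal{R})$ for some $J\in\Gamma(\mathrm{End}(\ker d\mathfrak{t}))$, giving item $(iii)$.

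Finally, each identity from the second step passes to the limit: $J^2=-\mathrm{id}$ and $g(J\cdot,J\cdot)=g$, so $J_t$ is a $g_t$-compatible almost-complex structure on $\mathcal{R}_t$; $\nabla^g J=0$, hence $\nabla^{g_t}J_t=0$ on each slice, which forces the Nijenhuis tensor of $J_t$ to vanish and the associated $(1,1)$-form $g_t(J_t\cdot,\cdot)$ to be parallel, hence closed, so $(\mathcal{R}_t,g_t,J_t)$ is K\"ahler; and using $(\psi_i^{-1})_{\ast}\partial_t\to\partial_{\mathfrak{t}}$ from $(ii)$ together with $\psi_i^{\ast}J_i\to J$, we get $\mathcal{L}_{\partial_{\mathfrak{t}}}J=0$. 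There is no essential obstacle here: the depth of the theorem lies entirely in the cited Riemannian compactness and regularity theory of \cite{bamlergen2}, and the only point requiring any care in the complex-geometric addition is that the $C^k$ bounds in the extraction step are uniform across the varying domains $U_i$, which is immediate from the way the exhaustion $(U_i)$ and the convergences $(i)$, $(ii)$ are produced.
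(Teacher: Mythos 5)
Your proposal is correct and follows essentially the same route as the paper: both reduce to the complex-structure claims via the cited theorems of Bamler, then extract $J$ by Arzel\`a--Ascoli using the pointwise norm bound $|J_i|=\sqrt{2n}$ and $\nabla J_i=0$ to bootstrap $C^k_{loc}$ bounds. The only cosmetic difference is in the final step: the paper passes $d\omega_{i,t}=0$ directly to the limit, while you deduce closedness of $\omega_t$ and integrability of $J_t$ from the limiting identity $\nabla^{g_t} J_t=0$; these are equivalent.
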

\begin{proof} By the mentioned theorems in \cite{bamlergen2,bamlergen3}, it suffices to verify the claims concerning the complex structures. Because $|J_i|_{g_{i,t}}=\sqrt{2n}$ and $\nabla J_i =0$, the Arzela-Ascoli theorem lets us pass to a subsequence so that $\psi_i^{\ast}J_i \to J$, where $J$ restricts to an almost-complex structure on $T\mathcal{R}_t$ for each $t\in (-T,0]$. Moreover, if $\omega_{i,t} \in \Omega^2(M_i)$ denote the K\"ahler forms of $(M_i,g_{i,t})$, then $\psi_i^{\ast}\omega_{i,t} \to \omega_t$, where $\omega_t(\cdot,\cdot):=g_t(J\cdot,\cdot)$. Then $d\omega_{i,t}=0$ and $\partial_t J_i=0$ pass to the limit to give $d\omega_t=0$ and $\mathcal{L}_{\partial_{\mathfrak{t}}}J=0$, so $(\mathcal{R}_t,g_t,J_t)$ is K\"ahler, where $J_t := J|T\mathcal{R}_t$.
\end{proof}

Next, we will recall Bamler's description of the infinitesimal structure of the metric flows $\mathcal{X}$ obtained as $\mathbb{F}$-limits of closed Ricci flows as in Theorem \ref{bamconvergence}.

\begin{defn}[Singular Solitons, Definition 2.15 in \cite{bamlergen3}] A singular space is a tuple $(X,d,\mathcal{R}_X,g_X)$, where $(X,d)$ is a complete, locally compact metric length space, $\mathcal{R}_X\subseteq X$ is a dense open subset admiting a smooth structure so that $(\mathcal{R}_X,g_X)$ is a smooth Riemannian manifold whose length metric is $d|(\mathcal{R}_X\times \mathcal{R}_X)$, and such that for any compact subset $K\subseteq X$ and $D<\infty$, there exist $0<\kappa_1(K,D)<\kappa_2(K,D)<\infty$ such that for all $x\in K$ and $r\in (0,D)$, we have
$$\kappa_1 r^n <|B(x,r)\cap \mathcal{R}_X|<\kappa_2 r^n.$$
A singular shrinking gradient Ricci soliton (GRS) consists of a singular space along with a function $f\in C^{\infty}(\mathcal{R})$ satisfying the Ricci soliton equation on $\mathcal{R}_X$:
$$Rc(g_X)+\nabla^2 f=\frac{1}{2}g_X.$$
\end{defn}

In the following, we will not directly use the precise definition of a metric soliton or static flow modeled on a metric space, but the interested reader may find these definitions in Section 3.8 of \cite{bamlergen2}.

\begin{thm}[Theorems 2.6, 2.16, 2.18 in \cite{bamlergen3}] \label{tangentflows} If $\mathcal{X}$ is a metric flow obtained as in Theorem \ref{bamconvergence}, and $y\in \mathcal{X}$, $t_0 :=\mathfrak{t}(x)$, then for any sequence $\lambda_k \nearrow \infty$, we can pass to a further subsequence so that the time shifted and parabolically rescaled metric flows $(\mathcal{X}^{-t_0,\lambda_k},(\nu_{y;t}^{-t_0,\lambda_k})_{t\in (-\lambda_k^2 (t_0 -T),0]})$ $\mathbb{F}$-converge to a metric flow pair $(\mathcal{Y},(\nu_{y_{\infty};t})_{t\in (-\infty,0]})$, where $(\mathcal{Y}_{<0},(\nu_{y_{\infty};t_{\infty}})_{t\in (-\infty,0)})$ is a metric soliton modeled on a singular shrinking K\"ahler GRS $(Y,d,\mathcal{R}_Y,g_Y,J_Y,f_Y)$. Also, there are diffeomorphisms as in Theorem \ref{bamconvergence} realizing smooth convergence on the regular part of $\mathcal{Y}$. There is an identification $\mathcal{Y}_{<0}\cong Y\times (-\infty,0)$ restricting to isometries $(\mathcal{Y}_t,d_t)\cong (Y,\sqrt{|t|}d)$, and also identifying the spacetime $\mathcal{R}\subseteq \mathcal{Y}$ with $(\mathcal{R}_Y\times (-\infty,0),t,\partial_t -\nabla f_Y,|t|g_Y)$. Writing $d\nu_{y_{\infty};t}=(4\pi \tau)^{-\frac{n}{2}}e^{-f}dg_t$, we have that $f(\cdot,t)$ corresponds to $f_Y$ for all $t<0$ with respect to this identification.

If $Rc(g_Y)=0$, then $\mathcal{Y}_{<0}$ is a static metric flow modeled on the metric cone $(Y,d)$ with vertex $o$. Moreover, in this case, there is an identification $\mathcal{Y}_{<0}\cong Y\times (-\infty,0)$ restricting to isometries $(\mathcal{Y}_t,d_t)\cong (Y,d)$, and identifying the spacetime $\mathcal{R}$ with $(\mathcal{R}_Y \times (-\infty,0),t,\partial_t, g_Y)$; $f(\cdot,t)$ then corresponds to $\frac{1}{4|t|}d^2(o,\cdot)+W_{\infty}$ for each $t<0$. Moreover, $\mathcal{R}_Y \cap \partial B(o,1)$ equipped with the restricted Riemannian metric is a Sasaki-Einstein manifold.
\end{thm}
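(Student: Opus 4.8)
The non-K\"ahler content of the statement --- the metric soliton structure of $\mathcal{Y}_{<0}$, the underlying singular shrinking GRS $(Y,d,\mathcal{R}_Y,g_Y,f_Y)$ together with the spacetime identifications, and, when $Rc(g_Y)=0$, the metric cone structure of $(Y,d)$, the static identifications, and the formula for $f(\cdot,t)$ --- is precisely the content of Theorems 2.6, 2.16 and 2.18 of \cite{bamlergen3}, applied to the time-shifted and parabolically rescaled flows $(\mathcal{X}^{-t_0,\lambda_k},\dots)$ (which remain K\"ahler-Ricci flows). So the plan is to take all of this as given and verify only the additional K\"ahler assertions: that $\mathcal{R}_Y$ carries a complex structure $J_Y$ with $(\mathcal{R}_Y,g_Y,J_Y)$ K\"ahler, and that in the Ricci-flat case $\mathcal{R}_Y\cap\partial B(o,1)$ with its restricted metric is Sasaki--Einstein.

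For the first assertion I would argue exactly as in the proof of Theorem \ref{bamconvergence}. Bamler's theory provides an exhaustion $(U_k)$ of the regular part $\mathcal{R}\subseteq\mathcal{Y}$ by precompact open sets and time-preserving diffeomorphisms $\psi_k\colon U_k\to M_k$ into the rescaled flows realizing $C^\infty_{loc}$-convergence of the metrics. Since each $\psi_k^\ast J_k$ is $g_k$-parallel with $|\psi_k^\ast J_k|=\sqrt{2n}$, Arzel\`a--Ascoli lets us pass to a subsequence with $\psi_k^\ast J_k\to J$ in $C^\infty_{loc}(\mathcal{R})$; the limit is parallel and compatible with the limiting spacetime metric, satisfies $\mathcal{L}_{\partial_{\mathfrak{t}}}J=0$, and restricts to a K\"ahler structure on each time slice, just as in Theorem \ref{bamconvergence}. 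Under the identification $\mathcal{R}\cong\mathcal{R}_Y\times(-\infty,0)$ with metric $|t|g_Y$ (or $g_Y$ in the static case) and flow vector field $\partial_t-\nabla f_Y$ (resp.\ $\partial_t$), the condition $\mathcal{L}_{\partial_{\mathfrak{t}}}J=0$ forces $J$ to be the pullback of a single $t$-independent $J_Y$ on $\mathcal{R}_Y$, which is then parallel and Hermitian for $g_Y$, hence K\"ahler; the identification of $f_Y$ (resp.\ $f(\cdot,t)$) is quoted unchanged from \cite{bamlergen3}.

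For the second assertion, suppose $Rc(g_Y)=0$, so $(\mathcal{R}_Y,g_Y,J_Y)$ is Ricci-flat K\"ahler and, by \cite{bamlergen3}, $(Y,d)$ is a metric cone with vertex $o$; consequently $\mathcal{R}_Y$ is an open metric cone over the link $L:=\mathcal{R}_Y\cap\partial B(o,1)$ with restricted metric $g_L$, so that $g_Y=dr^2+r^2 g_L$ there. I would then invoke the standard equivalence (see, e.g., \cite{cheegercoldingtian, szek2}): a metric $dr^2+r^2 g_L$ is Ricci-flat K\"ahler, with complex structure compatible with the cone dilations, if and only if $(L,g_L)$ is Sasaki--Einstein, in which case $Rc(g_L)=(2n-2)g_L$. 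The dilation-compatibility of $J_Y$ --- equivalently, that $r\partial_r$ is real-holomorphic, or $\xi:=J_Y(r\partial_r)$ is Killing --- should be extracted from the self-similar structure of the static metric flow: the parabolic rescalings act on $Y$ by the cone homotheties $D_\lambda$, which intertwine the (scale-invariant) complex structures, so $(D_\lambda)_\ast J_Y=J_Y$, and differentiating in $\lambda$ gives that $r\partial_r$ is holomorphic. This produces the Reeb field and Sasaki structure on $L$, and Ricci-flatness of the cone then yields the Einstein condition.

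I expect the only genuine subtlety to be this last compatibility point: carefully matching the limiting complex structure $J$ with Bamler's self-similarity and cone identifications, so that $J_Y$ is invariant under the cone dilations and not merely parallel. This is where one must use the explicit conjugate heat kernel potential $f(\cdot,t)=\frac{1}{4|t|}d(o,\cdot)^2+W_\infty$ in the static case, together with $\mathcal{L}_{\partial_{\mathfrak{t}}}J=0$ and the naturality of the tangent-flow construction under rescaling; everything else is either quoted directly from \cite{bamlergen3} or is the routine Arzel\`a--Ascoli passage to the limit already used in the proof of Theorem \ref{bamconvergence}.
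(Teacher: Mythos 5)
Your outline matches the paper's (quote the Bamler theorems for the non-K\"ahler structure, then transplant the Arzel\`a--Ascoli/smooth-convergence argument of Theorem \ref{bamconvergence} to produce a K\"ahler structure on $\mathcal{R}$), but you miss the one-line fact that the paper actually uses to close both of the gaps you leave open. The paper's proof consists of the quoted references plus the observation (citing Section 2.2 of \cite{fik}) that a gradient Ricci soliton structure on a K\"ahler manifold is automatically a K\"ahler--Ricci soliton. The point is elementary: since $Rc$ and $g$ are both $J$-invariant on a K\"ahler manifold, the GRS equation $Rc+\nabla^2 f_Y=\tfrac{1}{2}g_Y$ forces $\nabla^2 f_Y$ to be $J$-invariant, which is equivalent to $\nabla f_Y$ being real holomorphic (equivalently, $J\nabla f_Y$ Killing).

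This fact is what you need in both places where your proposal is incomplete. First, in the soliton identification $\mathcal{R}\cong(\mathcal{R}_Y\times(-\infty,0),t,\partial_t-\nabla f_Y,|t|g_Y)$, the flow vector field is $\partial_t-\nabla f_Y$, not $\partial_t$, so $\mathcal{L}_{\partial_{\mathfrak{t}}}J=0$ only gives $\partial_t J=\mathcal{L}_{\nabla f_Y}J$; your assertion that this ``forces $J$ to be the pullback of a single $t$-independent $J_Y$'' requires $\mathcal{L}_{\nabla f_Y}J=0$, which is exactly the FIK fact. Second, in the static case, $f_Y=\tfrac{1}{4}d^2(o,\cdot)+W_\infty$ so $\nabla f_Y=\tfrac{1}{2}r\partial_r$, and holomorphicity of $\nabla f_Y$ is precisely the dilation-compatibility of $J_Y$ that you identify as ``the only genuine subtlety'' and try to extract via an unworked argument about cone homotheties intertwining the complex structures. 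The paper avoids that detour entirely: the GRS equation plus K\"ahlerity directly give $r\partial_r$ real holomorphic, hence a Ricci-flat K\"ahler cone, hence a Sasaki--Einstein link. Your approach could likely be made to work, but the FIK observation is both shorter and what the paper does.
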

\begin{proof} By the mentioned theorems in \cite{bamlergen3}, and by Theorem \ref{bamconvergence}, it suffices to recall that a gradient Ricci soliton structure on a K\"ahler manifold is automatically a K\"ahler-Ricci soliton (see section 2.2 of \cite{fik}).
\end{proof}

In order to define Bamler's stratification of the singular set, we must review the notions of almost-split, almost-static, and almost-selfsimilar.

\begin{defn}[Definitions 5.1, 5.5, 5.6, 5.7 in \cite{bamlergen3}] \label{almost} Suppose $(M^n,(g_t)_{t\in I})$ is a closed Ricci flow, $(x_0,t_0)\in M\times I$, $r>0$, $\epsilon \in (0,1)$, and write $d\nu_{x_0,t_0}=(4\pi \tau)^{-\frac{n}{2}}e^{-f}dg$.\\
$(i)$ $(x_0,t_0)$ is $(\epsilon,r)$-selfsimilar if $[t_0-\epsilon^{-1}r^2,t_0]\subseteq I$ and the following hold for $W:=\mathcal{N}_{x,t}(r^2)$:\\
$$\int_{t_0-\epsilon^{-1}r^2}^{t_0-\epsilon r^2} \int_M \tau \left| Rc+\nabla^2 f -\frac{1}{2\tau}g \right|^2 d\nu_{x_0,t_0;t}dt \leq \epsilon ,$$
$$\sup_{t\in [t_0-\epsilon^{-1}r^2,t_0-\epsilon r^2]} \int_M \left| \tau (R+2\Delta f-|\nabla f|^2 )+f-n-W \right|d\nu_{x_0,t_0;t} \leq \epsilon,$$
$$\inf_{M\times [t_0-\epsilon^{-1}r^2,t_0-\epsilon r^2]} r^2 R\geq -\epsilon.$$
$(ii)$ $(x_0,t_0)$ is $(\epsilon,r)$-static if $[t_0-\epsilon^{-1}r^2,t_0]\subseteq I$ and the following hold:\\
$$r^2 \int_{t_0-\epsilon^{-1}r^2}^{t_0-\epsilon r^2} \int_M |Rc|^2 d\nu_{x_0,t_0;t}dt \leq \epsilon,$$
$$\sup_{t\in [t_0-\epsilon^{-1}r^2,t_0 -\epsilon r^2]}\int_M R d\nu_{x_0,t_0;t} \leq \epsilon,$$
$$ \inf_{M\times [t_0-\epsilon^{-1}r^2,t_0-\epsilon r^2]} r^2 R\geq -\epsilon.$$
$(iii)$ $(x_0,t_0)$ is weakly $(k,\epsilon,r)$-split if $[t_0-\epsilon^{-1}r^2,t_0]\subseteq I$ and there is a map $y=(y_1,...,y_k):M\times [t_0-\epsilon^{-1}r^2,t_0-\epsilon r^2] \to \mathbb{R}^k$, called a weak $(k,\epsilon,r)$-splitting map, satisfying the following:
$$r^{-1}\int_{t_0-\epsilon^{-1}r^2}^{t_0 -\epsilon r^2}\int_M |\square y_i| d\nu_{x_0,t_0;t}dt \leq \epsilon,$$
$$r^{-2}\int_{t_0-\epsilon^{-1}r^2}^{t_0 -\epsilon r^2} \int_M |\langle \nabla y_i ,\nabla y_j \rangle -\delta_{ij} |d\nu_{x_0,t_0;t}dt\leq \epsilon .$$
$(iv)$ $(x_0,t_0)$ is strongly $(k,\epsilon,r)$-split if $[t_0-\epsilon^{-1}r^2,t_0]\subseteq I$ and there is a map $y:M\times [t_0-\epsilon^{-1}r^2,t_0-\epsilon r^2] \to \mathbb{R}^k$, called a strong $(k,\epsilon,r)$-splitting map, satisfying the following:\\
$$\square y_i=0 \text{ on } M\times [t_0-\epsilon^{-1}r^2,t_0 -\epsilon r^2],$$
$$r^{-2}\int_{t_0-\epsilon^{-1}r^2}^{t_0 -\epsilon r^2}\int_M |\langle \nabla y_i ,\nabla y_j \rangle -\delta_{ij} |d\nu_{x_0,t_0;t}dt\leq \epsilon ,$$
$$\int_M y_i d\nu_{x_0,t_0;t}=0 \text{ for all } t\in [t_0 -\epsilon^{-1}r^2,t_0-\epsilon r^2].$$
\end{defn}

The following estimate gives rough $L^p$ bounds on various geometric quantities in almost-selfsimilar regions, which we will use frequently.

\begin{prop}[c.f. Proposition 6.2 in \cite{bamlergen3}] Given $\epsilon>0$, if $\alpha \leq \overline{\alpha}$ and $\delta \leq \overline{\delta}(\epsilon)$, then the following holds. Suppose $(M^n,(g_t)_{t\in I})$ is a Ricci flow, $r>0$, $(x_0,t_0)\in M\times I$ is $(\delta,r)$-selfsimilar, $W:=\mathcal{N}_{x_0,t_0}(1)\geq -Y$, and write $(4\pi \tau)^{-\frac{n}{2}}e^{-f}dg:=d\nu :=d\nu_{x_0,t_0}$. Then
\begin{equation} \label{spacetime} \int_{t_0-\epsilon^{-1}r^2}^{t_0-\epsilon r^2} \int_M \left( \tau |Rc|^2 +\tau |\nabla^2 f|^2 +|\nabla f|^2 +\tau |\nabla f|^4 +\tau^{-1}e^{\alpha f} \right) e^{2\alpha f}d\nu_t dt \leq C(Y,\epsilon), 
\end{equation}
\begin{equation} \label{timeslice} \sup_{t\in [t_0-\epsilon^{-1} r^2,t_0-\epsilon r^2]}
\int_M \left( \tau |R| + \tau|\Delta f| +\tau |\nabla f|^2 +e^{\alpha f} \right)e^{2\alpha f}d\nu_t \leq C(Y,\epsilon).
\end{equation}
\end{prop}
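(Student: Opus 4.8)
This is Bamler's Proposition 6.2 of \cite{bamlergen3} (it involves no K\"ahler hypothesis), so the plan is to adapt that argument rather than invent a new one; I outline the shape of it. After a parabolic rescaling we may take $r=1$ and $t_0=0$, so that $\tau=-t$ and the $(\delta,1)$-selfsimilar hypotheses hold on $M\times[-\delta^{-1},-\delta]$. The proof is a family of weighted $L^p$ estimates assembled from three external inputs: (a) the lower Nash-entropy bound propagates, $\mathcal{N}_{x_0,0}(\tau)\geq -C(Y,\epsilon)$ for $\tau\in[\epsilon,\epsilon^{-1}]$, by monotonicity of $\mathcal{N}$ together with Bamler's control on its variation; (b) the Gaussian concentration estimates of \cite{bamlergen1}, which together with the on-diagonal heat-kernel bound (which gives $f\geq -C$) yield $\int_M e^{\alpha' f}\,d\nu_t\leq C(Y,\epsilon,\alpha')$ on the same $\tau$-range whenever $\alpha'\leq\overline{\alpha}$; and (c) Perelman's pointwise identity $\square^*(Hv)=-2\tau|Rc+\nabla^2 f-\tfrac{1}{2\tau}g|^2 v$ for $H:=\tau(2\Delta f-|\nabla f|^2+R)+f-n$, whose spatial integral $\mathcal{W}(g_t,f,\tau)$ is nonincreasing in $\tau$ and satisfies $\mathcal{W}\leq 0$.

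\emph{Step 1 (unweighted bounds).} Integrating by parts gives $\mathcal{W}(\tau)=\tau\int_M(|\nabla f|^2+R)\,d\nu_t+\mathcal{N}(\tau)-\tfrac{n}{2}$, so $\mathcal{W}\leq 0$ combined with the selfsimilar lower bound $\tau R\geq-\delta\epsilon^{-1}$ and (a) gives $\int_M\tau|\nabla f|^2\,d\nu_t\leq C(Y,\epsilon)$ and $\int_M\tau|R|\,d\nu_t\leq C(Y,\epsilon)$. Differentiating $\mathcal{W}$ in $\tau$ and using (c) bounds $\int_\epsilon^{\epsilon^{-1}}\tfrac{1}{\tau}\int_M\tau^2|Rc+\nabla^2 f-\tfrac{1}{2\tau}g|^2\,d\nu_t\,d\tau$ by $C(Y,\epsilon)$. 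An integrated Bochner identity for $f$ — with the $L^1$-closeness of $H$ to $W$ from the selfsimilar hypothesis absorbing the $(\Delta f)^2$, $Rc(\nabla f,\nabla f)$, and $\partial_t$ terms — then bounds $\int_{-\epsilon^{-1}}^{-\epsilon}\int_M\tau|\nabla^2 f|^2\,d\nu_t\,dt$, after which $|Rc|\leq|Rc+\nabla^2 f-\tfrac{1}{2\tau}g|+|\nabla^2 f|+\tfrac{\sqrt n}{2\tau}$ and the smallness hypothesis yield the bound on the spacetime integral of $\tau|Rc|^2$. The $\tau|\nabla f|^4$ term is obtained by integrating $\int_M|\nabla f|^4\,v$ by parts against $\nabla(|\nabla f|^2 e^{-f})$ and invoking the previous bounds.

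\emph{Step 2 (inserting the weight $e^{2\alpha f}$).} This is the real work and the step I expect to be the main obstacle: every integration by parts now produces a term with $\nabla(e^{2\alpha f})=2\alpha e^{2\alpha f}\nabla f$, so the weighted curvature norms become coupled to $\int_M|\nabla f|^2 e^{2\alpha f}\,d\nu_t$ and to weighted quantities with exponent as large as $3\alpha$. I would fix $\alpha$ with $3\alpha<\overline{\alpha}$, rerun the Step-1 estimates with $e^{2\alpha f}$ in place of $1$, and use Young's inequality (e.g.\ $2\alpha e^{2\alpha f}|\nabla f|^2\leq\eta\,\tau e^{2\alpha f}|\nabla f|^4+C_\eta\alpha^2\tau^{-1}e^{2\alpha f}$) to move top-order terms to the left-hand side and (b) to dominate the lower-order remainders; a short bootstrap on the weight exponent closes the system and forces $\alpha\leq\overline{\alpha}$. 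The temporal boundary terms created when integrating the $\square^*$-identities over $[-\epsilon^{-1},-\epsilon]$ are handled with a time cutoff supported in the strictly larger interval $[-\delta^{-1},-\delta]$ on which the hypotheses hold — this is also why the final constants must depend on $\epsilon$. Finally, the standalone terms follow from (b) directly: $\int_{-\epsilon^{-1}}^{-\epsilon}\int_M\tau^{-1}e^{3\alpha f}\,d\nu_t\,dt\leq 2\log(\epsilon^{-1})\sup_\tau\int_M e^{3\alpha f}\,d\nu_t\leq C(Y,\epsilon)$, and the unweighted-in-$\tau$ term $\int_{-\epsilon^{-1}}^{-\epsilon}\int_M|\nabla f|^2 e^{2\alpha f}\,d\nu_t\,dt$ reduces to the timeslice bound on $\tau|\nabla f|^2 e^{2\alpha f}$ since $\tau\geq\epsilon$ there.
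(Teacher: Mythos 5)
The paper's proof of this proposition is a two-sentence citation, not a derivation: it invokes Proposition~7.1 of \cite{bamlergen3} to propagate the Nash entropy lower bound, giving $\mathcal{N}_{x_0,t_0}(\tau)\geq W-\Psi(\delta\,|\,Y,\epsilon)$ for $\tau\in[\epsilon,\epsilon^{-1}]$, and then applies Bamler's Proposition~6.2 directly for $r\in[\epsilon^{1/2},\epsilon^{-1/2}]$ with $\theta:=\epsilon^2$ to cover the stated time range. Your input~(a) correctly identifies the one genuinely new ingredient needed here (the entropy propagation), but the rest of your sketch is an attempt to re-prove Bamler's Proposition~6.2 from scratch rather than to invoke it, which is not what the paper does and is unnecessary given the labeling ``c.f.\ Proposition 6.2 in \cite{bamlergen3}.''

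As a reconstruction of Bamler's argument, your outline has real gaps that you yourself flag. In Step~2 you say ``this is the real work and the step I expect to be the main obstacle'' and you gesture at a ``short bootstrap on the weight exponent'' without carrying it out; this is precisely the place where one must ensure that the Young's-inequality absorptions close consistently across the coupled family of weighted estimates, and a hand-waved bootstrap does not establish that. You also do not explain how the time-range mismatch between the almost-selfsimilar hypotheses (stated on $[t_0-\delta^{-1}r^2,\,t_0-\delta r^2]$) and the conclusion (stated on $[t_0-\epsilon^{-1}r^2,\,t_0-\epsilon r^2]$) is handled, which in the paper's approach is resolved painlessly by varying $r$ over $[\epsilon^{1/2},\epsilon^{-1/2}]$ and choosing $\theta$ appropriately. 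The shorter route -- establish the entropy lower bound uniformly over the scales of interest and then cite the external proposition as a black box -- is both what the paper intends and what actually constitutes a complete proof at the level of rigor expected here.
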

\begin{proof} By Proposition 7.1 of \cite{bamlergen3}, we have $\mathcal{N}_{x_0,t_0}(\tau)\geq W-\Psi(\delta|Y,\epsilon)$. We can therefore apply Proposition 6.2 of \cite{bamlergen3} for any $r\in [\epsilon^{\frac{1}{2}},\epsilon^{-\frac{1}{2}}]$, taking $\theta :=\epsilon^2$, to obtain (\ref{spacetime}), (\ref{timeslice}).
\end{proof}

We now review the quantitative stratification of metric flows introduced by Bamler. We will use a slightly stricter definition of $(k,\epsilon,r)$-symmetric points than that in \cite{bamlergen3}. Let $(\mu_t^{\mathbb{R}^k})_{t<0}$ be the Euclidean backwards heat kernel based at $0^k \in \mathbb{R}^k$. 

\begin{defn}[$(k,\epsilon,r)$-symmetric points, c.f. Definition 2.21 in \cite{bamlergen2}] \label{symdef} Given a metric flow $\mathcal{X}$ over $I$, a point $x_0 \in \mathcal{X}_{t_0}$ is $(k,\epsilon,r)$-symmetric if $[t_0 -\epsilon^{-1}r^2,t_0]\subseteq I$ and there is a metric flow pair $(\mathcal{X}',(\mu_t')_{t\leq 0})$ over $(-\infty,0]$ which is an $\mathbb{F}$-limit of noncollapsed Ricci flows as in Theorem \ref{bamconvergence}, and which satisfies one of the following:
\begin{itemize}
    \item[$(b1)$] $(\mathcal{X}'_{<0},(\mu_t')_{t\in (-\infty,0)}) \cong (\mathcal{X}''\times \mathbb{R}^k,(\mu_t ''\otimes \mu_{\mathbb{R}^k})_{t\in (-\infty,0)})$ as metric flow pairs for some metric soliton $(\mathcal{X}'',(\mu_t '')_{t\in (-\infty,0)})$, and this identification restricts to an isometry of Ricci flow spacetimes $\mathcal{R}' \cong \mathcal{R}''\times \mathbb{R}^k$,
    \item[$(b2)$] $(\mathcal{X}'_{<0},(\mu_t')_{t\in (-\infty,0)}) \cong (\mathcal{X}''\times \mathbb{R}^{k-2},(\nu_{x';t} '\otimes \mu_{\mathbb{R}^{k-2}})_{t\in (-\infty,0)})$ as metric flow pairs for some static cone $\mathcal{X}''$ with vertex $x'$, and this identification restricts to an isometry of Ricci flow spacetimes $\mathcal{R}' \cong \mathcal{R}''\times \mathbb{R}^{k-2}$.
\end{itemize}
In addition, $\mathcal{X}',\mathcal{X}''$ must satisfy the following:
\begin{itemize}
    \item[$(c)$] Writing $d\mu_t'= (4\pi \tau)^{-\frac{n}{2}}e^{-f'}dg$ on $\mathcal{R}'$ and $d\mu_t'' = (4\pi \tau)^{-\frac{n}{2}}e^{-f''}dg''$ on $\mathcal{R}'$, we have $Rc(g')+\nabla^2 f' =\frac{1}{2\tau}g'$ on $\mathcal{R}'$, $Rc(g'')+\nabla^2 f'' =\frac{1}{2\tau}g''$ on $\mathcal{R}''$, and $f' =f''+\frac{1}{4\tau}|x|^2$ on $\mathcal{R}'$. In case $(b2)$, we also have $Rc(g')=0$ and $Rc(g'')=0$,
    \item[$(d)$] $\mathcal{N}_{(\mu_{t}')}(\tau)=W$ for all $\tau >0$, where $W \in [-Y,0]$.
\end{itemize}
Finally, we require that $|\mathcal{N}_{x_0}(r^2)-W|<\epsilon$ and
$$d_{\mathbb{F}}\left( (\mathcal{X}_{[-\epsilon^{-1},0]}^{-t_0,r^{-1}},(\nu_{x_0;t}^{-t_0,r^{-1}})_{t\in [-\epsilon^{-1},0]}),(\mathcal{X}_{[-\epsilon^{-1},0]}',(\mu_t')_{t\in [-\epsilon^{-1},0]}) \right)<\epsilon .$$
\end{defn}

The main difference between Definition 2.21 in \cite{bamlergen3} and Definition \ref{symdef} is the added assumptions on the Nash entropy.

There is another notion of almost-symmetric points of a metric flow, defined in terms of smooth Ricci flow approximants.

\begin{defn}[Weakly $(k,\epsilon,r)$-symmetric points, Definition 20.1 in \cite{bamlergen3}] Given a metric flow $\mathcal{X}$ over $I$, a point $x\in \mathcal{X}_{t_0}$ is weakly $(k,\epsilon,r)$-symmetric if $[t_0-\epsilon^{-1}r^2,t_0]\subseteq I$ and there is a closed, pointed Ricci flow $(M',(g_t')_{t\in [-\epsilon^{-1},0]},x')$ such that $(x',0)$ is $(\epsilon,1)$-selfsimilar and either strongly $(k,\epsilon,1)$-split or both $(\epsilon,1)$-static and  strongly $(k-2,\epsilon,1)$-split, which satisfies $|\mathcal{N}_{x',0}(1)-\mathcal{N}_{x_0}(r^2)|\leq \epsilon$ and
$$d_{\mathbb{F}}\left( (\mathcal{X}^{-t_0,r},(\nu_{x_0;t}^{-t_0,r})_{t\in [-\epsilon^{-1},0]}),(M',(g_t')_{t\in [-\epsilon^{-1},0]},(\nu_{x',0;t})_{t\in [-\epsilon^{-1},0]}) \right) <\epsilon.$$
\end{defn}

\begin{defn}[Strata and Quantitative strata of a metric flow, Definitions 2.21, 20.2 in \cite{bamlergen3}] If $\mathcal{X}$ is a metric flow over $I$ and $0\leq r_1 < r_2 <\infty$, $\epsilon>0$, then $\mathcal{S}_{r_1,r_2}^{\epsilon,k}$ consists of the points $x\in \mathcal{X}_t$ such that $[t-\epsilon^{-1}r_2^2,t]\subseteq I$ and $x$ is not $(k+1,\epsilon,r')$-symmetric for any $r'\in (r_1 ,r_2)$. Analogously, $x\in \widehat{\mathcal{S}}_{r_1,r_2}^{\epsilon,k}$ if $[t-\epsilon^{-1}r_2^2,t]\subseteq I$ and $x$ is not weakly $(k+1,\epsilon,r')$-symmetric for any $r'\in (r_1,r_2)$. Define
$$\mathcal{S}^k:=\cup_{\epsilon \in (0,1)}\widehat{\mathcal{S}}_{0,\epsilon}^{\epsilon,k}.$$
\end{defn}
Note that, because $\epsilon \mapsto \widehat{\mathcal{S}}_{0,r}^{\epsilon,k}$ and $r\mapsto \widehat{\mathcal{S}}_{0,r}^{\epsilon,k}$ are both decreasing, we can write
$$\mathcal{S}^k = \cup_{\epsilon \in (0,1)}\cup_{r\in (0,1)}\widehat{\mathcal{S}}_{0,r}^{\epsilon,k}.$$

Bamler showed that, roughly speaking, the weak quantitative strata are qualitatively at least as large as the quantitative strata.

\begin{lem}[c.f. Lemma 20.3 in \cite{bamlergen3}] Suppose $\mathcal{X}$ is an $\mathbb{F}$-limit of closed noncollapsed Ricci flows as in Theorem \ref{bamconvergence}. Given $Y<\infty$, $\epsilon>0$ there exists $\epsilon'(Y,\epsilon)>0$ such that for all $0\leq r_1 <r_2 <\infty$, we have
$$\{x \in \mathcal{X};\mathcal{N}_x(r_2^2)\geq -Y\} \cap \mathcal{S}_{r_1,r_2}^{\epsilon,k} \subseteq \widehat{\mathcal{S}}_{r_1,r_2}^{\epsilon'(Y,\epsilon),k}.$$ 
As a consequence, we have 
$$\cup_{\epsilon \in (0,1)} \mathcal{S}_{0,\epsilon}^{\epsilon,k} \subseteq \mathcal{S}^k.$$
\end{lem}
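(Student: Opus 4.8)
The plan is to follow the strategy of Lemma~20.3 of \cite{bamlergen3}, arguing by contradiction and compactness and invoking the structure theory of \cite{bamlergen3} (cf. Theorem~\ref{tangentflows}) to promote almost-symmetry of smooth approximants to exact symmetry in the limit, while additionally bookkeeping the Nash entropy so that the limiting model meets the extra conditions $(c),(d)$ and $|\mathcal{N}_{x_0}(r^2)-W|<\epsilon$ of our stricter Definition~\ref{symdef}. The hypothesis $\mathcal{N}_x(r_2^2)\geq -Y$, which is absent from Lemma~20.3 of \cite{bamlergen3}, is exactly what will let us control the entropy value $W$ of the limiting soliton. To set up the first inclusion, I would suppose it fails for some $Y,\epsilon,k$, producing for each $j$ a metric flow $\mathcal{X}_j$ as in Theorem~\ref{bamconvergence}, a point $x_j\in(\mathcal{X}_j)_{t_j}$, and radii $0\leq r_{1,j}<r_{2,j}<\infty$ with $\mathcal{N}_{x_j}(r_{2,j}^2)\geq -Y$ and $x_j\in\mathcal{S}_{r_{1,j},r_{2,j}}^{\epsilon,k}$, but $x_j$ weakly $(k+1,1/j,\rho_j)$-symmetric for some $\rho_j\in(r_{1,j},r_{2,j})$. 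Using that all objects involved transform covariantly under parabolic rescaling and time translation and that $\mathcal{N}_x(r^2)$ is scale invariant, I may normalize $t_j=0$ and $\rho_j=1$, so $r_{1,j}<1<r_{2,j}$ and, by monotonicity of the Nash entropy in $\tau$, $\mathcal{N}_{x_j}(1)\geq\mathcal{N}_{x_j}(r_{2,j}^2)\geq -Y$. Unwinding weak $(k+1,1/j,1)$-symmetry gives closed pointed Ricci flows $(M_j',(g'_{j,t})_{t\in[-j,0]},x_j')$ with $(x_j',0)$ being $(1/j,1)$-selfsimilar and either strongly $(k+1,1/j,1)$-split or both $(1/j,1)$-static and strongly $(k-1,1/j,1)$-split, with $|\mathcal{N}_{x_j',0}(1)-\mathcal{N}_{x_j}(1)|\leq 1/j$ and $d_{\mathbb{F}}\leq 1/j$ between the restrictions to $[-j,0]$ of $(M_j',(g'_{j,t}),(\nu_{x_j',0;t}))$ and of the normalized $\mathcal{X}_j$ based at $x_j$.

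Next I would pass to the limit. Since $\mathcal{N}_{x_j',0}(1)\geq -Y-1$, Theorem~\ref{bamconvergence} yields (after a subsequence) $\mathbb{F}$-convergence $(M_j',(g'_{j,t})_{t\in[-j,0]},(\nu_{x_j',0;t}))\to(\mathcal{X}_\infty',(\mu_t')_{t\leq 0})$ to a noncollapsed $\mathbb{F}$-limit over $(-\infty,0]$. The results of \cite{bamlergen3} then give: from almost-selfsimilarity, $(\mathcal{X}_\infty')_{<0}$ is a metric soliton, so writing $d\mu_t'=(4\pi\tau)^{-n/2}e^{-f'}dg'$ we have $Rc(g')+\nabla^2 f'=\tfrac{1}{2\tau}g'$ on $\mathcal{R}'$ and $\mathcal{N}_{(\mu_t')}(\tau)\equiv W$; from the almost-splitting (respectively the static plus almost-splitting) hypothesis, an isometric splitting $(\mathcal{X}_\infty')_{<0}\cong\mathcal{X}''\times\mathbb{R}^{k+1}$ with $\mathcal{X}''$ a metric soliton (respectively $(\mathcal{X}_\infty')_{<0}\cong\mathcal{X}''\times\mathbb{R}^{(k+1)-2}$ with $\mathcal{X}''$ a static cone and $Rc(g')=Rc(g'')=0$), compatibly with the Ricci flow spacetime structures, together with $f'=f''+\tfrac{1}{4\tau}|x|^2$. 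Thus $\mathcal{X}_\infty'$ satisfies $(b1)$ or $(b2)$ and $(c)$ of Definition~\ref{symdef}. For $(d)$, continuity of the Nash entropy under $\mathbb{F}$-convergence gives $W=\lim_j\mathcal{N}_{x_j',0}(1)$, and since $\mathcal{N}_{x_j',0}(1)\in[-Y-1/j,0]$ we conclude $W\in[-Y,0]$; moreover $\mathcal{N}_{x_j}(1)\to W$.

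Then I would close the contradiction and deduce the consequence. Fixing the given $\epsilon$ and restricting the $\mathbb{F}$-convergence above to $[-\epsilon^{-1},0]$, the triangle inequality with the $d_{\mathbb{F}}\leq 1/j$ bound shows that for $j$ large, $|\mathcal{N}_{x_j}(1)-W|<\epsilon$ and $d_{\mathbb{F}}<\epsilon$ between the normalized $\mathcal{X}_j$ at $x_j$ restricted to $[-\epsilon^{-1},0]$ and $(\mathcal{X}_\infty')|_{[-\epsilon^{-1},0]}$. Since $[-\epsilon^{-1},0]\subseteq I_j$ (because $x_j\in\mathcal{S}_{r_{1,j},r_{2,j}}^{\epsilon,k}$ forces $[-\epsilon^{-1}r_{2,j}^2,0]\subseteq I_j$ and $r_{2,j}>1$) and $1\in(r_{1,j},r_{2,j})$, the model $\mathcal{X}_\infty'$ witnesses that $x_j$ is $(k+1,\epsilon,1)$-symmetric in the sense of Definition~\ref{symdef} for $j$ large, contradicting $x_j\in\mathcal{S}_{r_{1,j},r_{2,j}}^{\epsilon,k}$. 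For the displayed consequence, given $x\in\mathcal{S}_{0,\epsilon}^{\epsilon,k}$ at time $t$ I would set $Y_x:=\max\{1,-\mathcal{N}_x(\epsilon^2)\}<\infty$, so $\mathcal{N}_x(r^2)\geq -Y_x$ for $r\leq\epsilon$ by monotonicity, and apply the first inclusion with parameters $Y_x,\epsilon$ and $r_1=0$, $r_2=\rho:=\min\{\epsilon'(Y_x,\epsilon),\epsilon\}\in(0,1)$ (shrinking $\epsilon'(Y_x,\epsilon)$ if necessary so that $\rho=\epsilon'(Y_x,\epsilon)$, which is harmless since $\widehat{\mathcal{S}}^{\,\cdot,k}_{r_1,r_2}$ is decreasing in its $\epsilon$-parameter); this places $x$ in $\{\mathcal{N}_x(\rho^2)\geq -Y_x\}\cap\mathcal{S}_{0,\rho}^{\epsilon,k}\subseteq\widehat{\mathcal{S}}_{0,\rho}^{\rho,k}\subseteq\mathcal{S}^k$.

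I expect the main obstacle to be the second step: upgrading the almost-selfsimilarity and almost-splitting of the $M_j'$ to exact structure on $\mathcal{X}_\infty'$, in particular the isometry of Ricci flow spacetimes and the splitting $f'=f''+\tfrac{1}{4\tau}|x|^2$ of the soliton potential. This is precisely the content of the structure theory in \cite{bamlergen3}, so once it is cited the remaining new content is the entropy bookkeeping — establishing $W\in[-Y,0]$, the constancy $\mathcal{N}_{(\mu_t')}(\tau)\equiv W$, and $|\mathcal{N}_{x_j}(1)-W|<\epsilon$ — which is exactly where the hypothesis $\mathcal{N}_x(r_2^2)\geq -Y$ is used to meet the requirements of Definition~\ref{symdef}.
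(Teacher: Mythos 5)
Your proof is correct and reaches the same conclusion, but it takes a genuinely different route than the paper. The paper's proof of the first inclusion is essentially a one-line citation: given $x\notin\widehat{\mathcal{S}}^{\epsilon',k}_{r_1,r_2}$ with $\mathcal{N}_x(r_2^2)\geq -Y$, it invokes Lemma~20.3 of \cite{bamlergen3} directly to conclude that $x$ is $(k+1,\epsilon,r)$-symmetric for the appropriate scale $r$, and then remarks in a single sentence that the cited lemma continues to hold under the paper's stricter Definition~\ref{symdef}, pointing to Theorem~15.45 (Nash entropy convergence) and Proposition~7.1 of \cite{bamlergen3} for the extra entropy bookkeeping. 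You instead re-derive the compactness-and-contradiction argument underlying Lemma~20.3 from scratch: you normalize so that $\rho_j=1$ and $t_j=0$, extract an $\mathbb{F}$-limit of the smooth approximants $(M_j',g_j',x_j')$, and then explicitly verify each of the stricter conditions $(b)$--$(d)$ of Definition~\ref{symdef} in the limit, together with the entropy closeness $|\mathcal{N}_{x_j}(1)-W|<\epsilon$. This is more work --- you are effectively reproving the cited lemma rather than invoking it --- but it buys transparency: your argument makes fully explicit where the hypothesis $\mathcal{N}_x(r_2^2)\geq -Y$ is consumed (to pin down $W\in[-Y,0]$ for condition~$(d)$), whereas the paper leaves this check to the reader. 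The treatments of the consequence $\cup_{\epsilon\in(0,1)}\mathcal{S}^{\epsilon,k}_{0,\epsilon}\subseteq\mathcal{S}^k$ differ only cosmetically: the paper takes $r_1=0$, $r_2=\epsilon$, unions over $\epsilon$, and then lets $Y\nearrow\infty$, while you fix a per-point entropy bound $Y_x$ and shrink $r_2$ to $\rho=\min\{\epsilon'(Y_x,\epsilon),\epsilon\}$, using the monotonicity of $\widehat{\mathcal{S}}$ in its parameters; both are valid.
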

\begin{proof} Suppose $x\in \mathcal{X}\setminus \widehat{\mathcal{S}}_{r_1,r_2}^{\epsilon'(Y,\epsilon),k}$ and $\mathcal{N}_x(r_2^2)\geq -Y$. By definition, there exists $r\in (r_1,r_2)$ such that $x$ is weakly $(k,\epsilon'(Y,\epsilon),r)$-symmetric. Lemma 20.3 of \cite{bamlergen3} then states that $x$ is $(k,\epsilon,r)$-symmetric, hence $x\notin \mathcal{S}_{r_1,r_2}^{\epsilon,k}$. We note that this Lemma holds even with our stricter definition of $(k,\epsilon ',r)$-symmetric points, by Nash entropy convergence (Theorem 15.45 of \cite{bamlergen3}) and Proposition 7.1 of \cite{bamlergen3}.

Taking $r_1 =0$, $r_2 =\epsilon$, and taking the union over $\epsilon \in (0,1)$ gives
$$\{ x\in \mathcal{X}; \mathcal{N}_x(1)\geq -Y \} \cap \left( \cup _{\epsilon\in (0,1)} \mathcal{S}_{0,\epsilon}^{\epsilon,k} \right) \subseteq \mathcal{S}^k,$$
so the remaining claim follows by taking $Y\nearrow \infty$.
\end{proof}

\begin{rem} We will later show that $\cup_{\epsilon \in (0,1)} \mathcal{S}_{0,\epsilon}^{\epsilon,k} = \mathcal{S}^k$.
\end{rem}

We now recall a result from \cite{bamlergen3} asserting the existence of good cutoff functions vanishing near the singular set of an $\mathbb{F}$-limit of Ricci flows, which will be useful in Section 6.  Assume $\mathcal{X}$ is a metric flow as in Theorem \ref{bamconvergence}.

\begin{lem}[Lemma 15.27 in \cite{bamlergen3}] \label{cutoff} There is a family of smooth functions $\eta_r \in C^{\infty}(\mathcal{R},[0,1])$ satisfying the following:\\
$(i)$ $r_{Rm}\geq r$ on $\{\eta_r>0\}$,\\
$(ii)$ $\eta_r |\{r_{Rm}\geq 2r\}\equiv 1$,\\
$(iii)$ $|\nabla \eta_r |\leq C_0 r^{-1}$, $|\partial_{\mathfrak{t}}\eta_r|\leq C_0 r^{-2}$, where $C_0 <\infty$ is universal\\
$(iv)$ for any $x\in \mathcal{X}_t$, $A<\infty$, and $r>0$, the set $\{ \eta_r >0\}\cap P^{\ast}(x_{\infty};A,-A^2)\cap \mathcal{R}_t$ is relatively compact in $\mathcal{R}_t$,\\
$(v)$ for any maximal integral curve $\gamma:I\to \mathcal{R}$ of $\mathfrak{t}$ (assume $\mathfrak{t}(\gamma(t))=t$ by a constant reparametrization), we have either $\eta_r(\gamma(t))=0$ for $t$ near $t_{\min}:= \inf(I)$, or else $t_{\min}=-T$.
\end{lem}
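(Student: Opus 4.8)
The statement is Lemma~15.27 of \cite{bamlergen3}; the plan is to recall its proof. I would build $\eta_r$ by composing a fixed smooth profile with a regularization of the curvature scale $r_{Rm}:\mathcal{R}\to(0,\infty]$. First recall the input properties of $r_{Rm}$: it is finite away from the flat points, tends to $0$ upon approach to the singular set $\mathcal{S}=\mathcal{X}\setminus\mathcal{R}$, and — by distance distortion along Ricci flow together with Bamler's $\epsilon$-regularity theorem — is locally Lipschitz in spacetime with a universal constant $L$ in the parabolically rescaled sense, i.e. $|\nabla r_{Rm}|\le L$ and $|\partial_{\mathfrak t}r_{Rm}|\le L\,r_{Rm}^{-1}$ (weakly) wherever it is finite. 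Fix a small universal $\theta<\tfrac13$. On the open region $\{r_{Rm}>r/2\}$ the flow has bounded geometry at scale comparable to $r_{Rm}$, so I would mollify $r_{Rm}$ parabolically at scale $\sigma(\theta)\,r_{Rm}$ to produce a smooth $\widetilde{r}$ with $(1-\theta)r_{Rm}\le\widetilde{r}\le(1+\theta)r_{Rm}$, $|\nabla\widetilde{r}|\le 2L$, and $|\partial_{\mathfrak t}\widetilde{r}|\le 2L\,\widetilde{r}^{-1}$, provided $\sigma(\theta)$ is small enough. Then set $\eta_r:=\phi(\widetilde{r}/r)$, where $\phi:\mathbb{R}\to[0,1]$ is smooth, nondecreasing, $\phi\equiv 0$ on $(-\infty,1+\theta]$ and $\phi\equiv 1$ on $[2-2\theta,\infty)$ (consistent since $\theta<\tfrac13$), extended by $0$ to $\{r_{Rm}\le r/2\}$ and by $1$ to the flat points; the overlap regions carry the constant values $0$ resp.\ $1$, so $\eta_r\in C^\infty(\mathcal{R},[0,1])$.

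Properties $(i)$ and $(ii)$ would then be immediate from the comparability of $\widetilde{r}$ with $r_{Rm}$: $\eta_r>0$ forces $\widetilde{r}>(1+\theta)r$, hence $r_{Rm}>r$; and $r_{Rm}\ge 2r$ forces $\widetilde{r}\ge 2(1-\theta)r$, hence $\eta_r=1$. For $(iii)$, observe that $\phi'(\widetilde{r}/r)\ne 0$ only where $\widetilde{r}\in[(1+\theta)r,(2-2\theta)r]$, so there $\widetilde{r}$ is comparable to $r$; the chain rule, the derivative bounds on $\widetilde{r}$, and $\|\phi'\|_\infty\le C$ then give $|\nabla\eta_r|\le C_0 r^{-1}$ and $|\partial_{\mathfrak t}\eta_r|\le C_0 r^{-2}$ with $C_0$ universal. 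All of this is routine mollification bookkeeping.

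For $(iv)$, by $(i)$ we have $\{\eta_r>0\}\subseteq\{r_{Rm}\ge r\}$, and on $\{r_{Rm}\ge r\}\cap\mathcal{R}_t$ the metric $g_t$ has $|\mathrm{Rm}|\le r^{-2}$ together with a uniform injectivity-radius lower bound (by $\epsilon$-regularity and noncollapsing). By the partial-regularity theory of \cite{bamlergen2,bamlergen3}, $r_{Rm}$ extends upper semicontinuously to $\mathcal{X}_t$ with value $0$ on $\mathcal{S}_t$, so $\{r_{Rm}\ge r\}$ is closed in $\mathcal{X}_t$ and disjoint from $\mathcal{S}_t$. Since $P^{\ast}(x_\infty;A,-A^2)\cap\mathcal{X}_t$ is bounded in $(\mathcal{X}_t,d_t)$ by Bamler's estimates on $P^{\ast}$-parabolic neighborhoods, the set $\{\eta_r>0\}\cap P^{\ast}(x_\infty;A,-A^2)\cap\mathcal{R}_t$ is a bounded subset of a region of uniformly bounded geometry, hence relatively compact in $\mathcal{R}_t$ by Gromov compactness. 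For $(v)$, let $\gamma:I\to\mathcal{R}$ be a maximal integral curve of $\partial_{\mathfrak t}$ with $\mathfrak t\circ\gamma=\mathrm{id}$ and $t_{\min}=\inf I$, and suppose $t_{\min}>-T$. It suffices to show $r_{Rm}(\gamma(t))\to 0$ as $t\searrow t_{\min}$, since then $r_{Rm}(\gamma(t))<r$, hence $\eta_r(\gamma(t))=0$, for $t$ near $t_{\min}$. If this failed, there would be $\rho>0$ and $t_j\searrow t_{\min}$ with $r_{Rm}(\gamma(t_j))\ge\rho$; for $j$ large the backward parabolic neighborhood $P(\gamma(t_j);c\rho)$ reaches below $t_{\min}$, lies in $\mathcal{R}$, and carries curvature $\le(c\rho)^{-2}$, so the integral curve of $\partial_{\mathfrak t}$ through $\gamma(t_j)$ — which coincides with $\gamma$ by uniqueness — would extend to times below $t_{\min}$, contradicting maximality.

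The main obstacle, I expect, is $(iv)$ — and, interlaced with it, pinning down the precise relation between $r_{Rm}$, the singular set, and $P^{\ast}$-parabolic neighborhoods — since this is exactly where one must invoke Bamler's $\epsilon$-regularity theorem and the compactness/partial-regularity package behind Theorem~\ref{bamconvergence}; by contrast the construction of $\eta_r$ and the verification of $(i)$--$(iii)$ are routine, and $(v)$ is a short consequence of maximality of integral curves in the spacetime.
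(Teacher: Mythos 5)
The paper does not prove this lemma; it is imported verbatim as Lemma~15.27 of \cite{bamlergen3}, so there is no internal argument to compare your sketch against. Your reconstruction — smooth $r_{Rm}$ at a scale proportional to itself, compose with a fixed profile $\phi$, read off $(i)$–$(iii)$ from the comparability of $\widetilde r$ with $r_{Rm}$ — is consistent with Bamler's approach, and the arguments you give for $(iv)$ (boundedness of $P^{\ast}$-parabolic neighborhoods together with $\{r_{Rm}\ge r\}$ being closed, disjoint from $\mathcal{S}_t$, and of uniformly bounded geometry) and for $(v)$ (if $r_{Rm}$ stayed bounded below along $\gamma$ near $t_{\min}$, bounded-curvature short-time existence would extend $\gamma$ backward, contradicting maximality) capture the right mechanism.

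Two points are glossed over more than they should be. First, the variable-scale mollification is the technical core of the lemma, not ``routine bookkeeping'': $r_{Rm}$ is only parabolically Lipschitz, the smoothing scale $\sigma(\theta)r_{Rm}$ itself varies, and one must verify the derivative bounds in $(iii)$ hold uniformly in $r$ after the chain rule is applied to a composition whose inner function is only controlled at one derivative; this needs an actual argument. Second, in $(iv)$ you pass from ``bounded, closed subset of a region of bounded geometry'' to ``relatively compact in $\mathcal{R}_t$,'' which implicitly uses that $(\mathcal{X}_t,d_t)$ is proper (a consequence of the Nash-entropy/noncollapsing hypotheses through Bamler's compactness theory) and that the closure avoids $\mathcal{S}_t$; both facts are true here but deserve to be named. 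With those supplied, the sketch is a sound reconstruction.
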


\section{Sequences of Noncollapsed Ricci Flows whose $\mathbb{F}$-Limits are Split Solitons}

In this section, we show the qualitative equivalence of Bamler's notions of quantitative strata and weak quantitative strata, with one direction already established in \cite{bamlergen3}. We begin with an elementary lemma concerning the 1-Wasserstein distance between product measures.

\begin{lem}
\label{lem:prodwass} Suppose $(X,d^{X}),(Y,d^{Y})$ are metric spaces
and $\mu_{1},\mu_{2}\in\mathcal{P}(X)$, $\nu_{1},\nu_{2}\in\mathcal{P}(Y)$.
Then 
\[ d_{W_{1}}^{X\times Y}\left(\mu_{1}\otimes\nu_{1},\mu_{2}\otimes\nu_{2}\right)\leq d_{W_{1}}^{X}(\mu_{1},\mu_{2})+d_{W_{1}}^{Y}(\nu_{1},\nu_{2}).
\]
\end{lem}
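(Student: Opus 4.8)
The plan is to couple optimally on each factor separately and form the product coupling. Concretely, let $\pi^X \in \mathcal{P}(X \times X)$ be an optimal (or, since optimality need not be attained, a near-optimal) coupling of $(\mu_1,\mu_2)$ realizing $d_{W_1}^X(\mu_1,\mu_2)$ up to an arbitrary $\varepsilon > 0$, and similarly let $\pi^Y \in \mathcal{P}(Y \times Y)$ be a near-optimal coupling of $(\nu_1,\nu_2)$. Then consider the measure $\pi := \pi^X \otimes \pi^Y$ on $(X \times Y) \times (X \times Y)$, after the obvious reordering of coordinates that identifies $(X \times X) \times (Y \times Y)$ with $(X\times Y) \times (X \times Y)$. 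Its marginals are $\mu_1 \otimes \nu_1$ and $\mu_2 \otimes \nu_2$, so $\pi$ is a valid coupling of the two product measures.

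The key estimate is then a pointwise bound on the cost function: with the product metric $d^{X\times Y}\big((x_1,y_1),(x_2,y_2)\big) = d^X(x_1,x_2) + d^Y(y_1,y_2)$ (this is the convention implicit in the statement), we have
\[
\int d^{X\times Y}\, d\pi = \int \big( d^X(x_1,x_2) + d^Y(y_1,y_2)\big)\, d\pi^X(x_1,x_2)\, d\pi^Y(y_1,y_2) = \int d^X d\pi^X + \int d^Y d\pi^Y,
\]
using that $\pi^X,\pi^Y$ are probability measures and Fubini. By the near-optimality of $\pi^X$ and $\pi^Y$, the right-hand side is at most $d_{W_1}^X(\mu_1,\mu_2) + d_{W_1}^Y(\nu_1,\nu_2) + 2\varepsilon$. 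Since $d_{W_1}^{X\times Y}(\mu_1\otimes\nu_1, \mu_2\otimes\nu_2)$ is the infimum of $\int d^{X\times Y} d\pi'$ over all couplings $\pi'$, it is bounded above by this quantity; letting $\varepsilon \to 0$ gives the claim.

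There is no real obstacle here; the only points requiring a modicum of care are (a) confirming that the product of the two couplings genuinely has the correct marginals — this is immediate from the definition of product measure and the marginal properties of $\pi^X,\pi^Y$ — and (b) handling the fact that optimal couplings for $W_1$ need not exist in full generality, which is why I phrase the argument with near-optimal couplings and an $\varepsilon$; alternatively one invokes the standard fact that optimal plans do exist when the spaces are, say, Polish, which suffices for all applications in this paper. One could equally run the argument on the Kantorovich--Rubinstein dual side, bounding $\int \varphi\, d(\mu_1\otimes\nu_1) - \int \varphi\, d(\mu_2\otimes\nu_2)$ for $1$-Lipschitz $\varphi$ on $X \times Y$ by splitting $\varphi$ along a reference point, but the coupling argument is cleaner and avoids Lipschitz-decomposition bookkeeping.
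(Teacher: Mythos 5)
Your proof is correct and takes essentially the same approach as the paper: form the product coupling $\pi^X\otimes\pi^Y$ (after the coordinate reordering), integrate the $\ell^1$ product metric via Fubini, and take infima. The only cosmetic difference is that you work with near-optimal couplings and let $\varepsilon\to 0$, whereas the paper takes the infimum over all couplings at the end; these are equivalent.
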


\begin{proof}
Suppose $q_{X}$ is a coupling of $(\mu_{1},\mu_{2})$
and $q_{Y}$ is a coupling of $(\nu_{1},\nu_{2})$. Define $\sigma:X\times X\times Y\times Y\to X\times Y\times X\times Y$,
$(x_{1},x_{2},y_{1},y_{2})\mapsto(x_{1},y_{1},x_{2},y_{2})$, and
$q:=\sigma_{\ast}(q_{X}\otimes q_{Y})$ . Then $q$ is a coupling
of $(\mu_{1}\otimes\nu_{1},\mu_{2}\otimes\nu_{2})$, so we can estimate
\begin{align*}
d_{W_{1}}^{X\times Y}(\mu_{1}\otimes\nu_{1},\mu_{2}\otimes\nu_{2})\leq & \int_{X\times Y\times X\times Y}d^{X\times Y}\left((x_{1},y_{1}),(x_{2},y_{2})\right)dq(x_{1},y_{1},x_{2},y_{2})\\
\leq & \int_{X\times X\times Y\times Y}\left(d^{X}(x_{1},x_{2})+d^{Y}(y_{1},y_{2})\right)dq_{X}(x_{1},x_{2})dq_{Y}(y_{1},y_{2})\\
= & \int_{X\times X}d^{X}(x_{1},x_{2})dq_{X}(x_{1},x_{2})+\int_{Y\times Y}d^{Y}(y_{1},y_{2})dq_{Y}(y_{1},y_{2}).
\end{align*}
Taking the infimum over all such couplings $q_{X},q_{Y}$ gives the
remaining claim. 
\end{proof}

We now show that if a metric soliton splits a factor of $\mathbb{R}^k$, this can be used to extract a sequence of approximating points in smooth Ricci flows which are almost-selfsimilar and almost-split.  This is analogous to the existence of almost-splitting maps in section 2.6 of \cite{cheegercoldingwarped} given Gromov-Hausdorff closeness to a metric product.

\begin{prop} \label{getsplit}
Suppose $(\mathcal{X},(\mu_t)_{t\in(-\infty,0]})$ is
a future continuous metric soliton satisfying the following:
\begin{itemize} 
\item[$(a)$] $(\mathcal{X},(\mu_t)_{t\in(-\infty,0)})$ is an $\mathbb{F}$-limit of $n$-dimensional closed Ricci flows $(M_i,(g_{i,t})_{t\in (-\delta_i^{-1},0]},(\nu_{x_i,0;t})_{t\in (-\delta^{-1},0]})$ as in Theorem \ref{bamconvergence}, with $\mathcal{N}_{x_i,0}(1)\geq -Y$,
\item[$(b)$] $(\mathcal{X},(\mu_t)_{t\in (-\infty,0)}) \cong (\mathcal{X}'\times \mathbb{R}^k,(\mu_t '\otimes \mu_{\mathbb{R}^k})_{t\in (-\infty,0)})$ as metric flow pairs for some metric soliton $(\mathcal{X}',(\mu_t ')_{t\in (-\infty,0)})$, and this identification restricts to an isometry of Ricci flow spacetimes $\mathcal{R} \cong \mathcal{R}'\times \mathbb{R}^k$,
\item[$(c)$] Writing $d\mu_t = (4\pi \tau)^{-\frac{n}{2}}e^{-f}dg$ on $\mathcal{R}$ and $d\mu_t' = (4\pi \tau)^{-\frac{n}{2}}e^{-f'}dg'$ on $\mathcal{R}'$, we have $Rc(g)+\nabla^2 f =\frac{1}{2\tau}g$ on $\mathcal{R}$, $Rc(g')+\nabla^2 f' =\frac{1}{2\tau}g'$ on $\mathcal{R}'$, and $f =f'+\frac{1}{4\tau}|x|^2$ on $\mathcal{R}$.
\item[$(d)$] $\mathcal{N}_{(\mu_{t})}(\tau)=W$ for all $\tau >0$, where $W \in [-Y,0]$.
\end{itemize}
Then for any $\epsilon>0$, $(x_i,0)$ is $(\epsilon,1)$-selfsimilar and $(k,\epsilon,1)$-split for sufficiently large $i=i(\epsilon)\in \mathbb{N}$. If in addition, $Rc(g)=0$, then $(x_i,0)$ is also $(\epsilon,1)$-static for sufficiently large $i\in \mathbb{N}$.
\end{prop}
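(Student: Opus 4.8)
The plan is to verify the three defining conditions of Definition~\ref{almost} for $(x_i,0)$ directly, transporting the \emph{exact} soliton and splitting structure of the limit $\mathcal{X}$ back to the approximators $M_i$. The selfsimilarity and static conditions are soft consequences of Nash entropy convergence and the maximum principle, while constructing the splitting map is the main content.

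\emph{Selfsimilarity.} By Nash entropy convergence (Theorem 15.45 of \cite{bamlergen3}) and hypothesis $(d)$, $\mathcal{N}_{x_i,0}(\tau)\to\mathcal{N}_{(\mu_t)}(\tau)=W$ for each fixed $\tau>0$. Perelman's entropy monotonicity identity gives
\[
\int_{-\epsilon^{-1}}^{-\epsilon}\int_{M_i}\tau\left|Rc+\nabla^2 f_i-\frac{1}{2\tau}g\right|^2 d\nu_{x_i,0;t}\,dt=\tfrac{1}{2}\left(\mathcal{N}_{x_i,0}(\epsilon)-\mathcal{N}_{x_i,0}(\epsilon^{-1})\right)\xrightarrow[i\to\infty]{}0,
\]
and the same pinching of $\tau\mapsto\mathcal{N}_{x_i,0}(\tau)$ over $[\epsilon,\epsilon^{-1}]$ controls the trace deficit $\int_{M_i}\left|\tau(R+2\Delta f_i-|\nabla f_i|^2)+f_i-n-W\right|d\nu_{x_i,0;t}$ via the entropy-pinching estimates of \cite{bamlergen3}. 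The scalar curvature bound is unrelated to the limit: for the closed flow $M_i$ on $(-\delta_i^{-1},0]$ one has $R_{g_i}(\cdot,t)\geq-\frac{n}{2(t+\delta_i^{-1})}$, so $R_{g_i}\geq-\frac{n}{2(\delta_i^{-1}-\epsilon^{-1})}\to 0$ on $M_i\times[-\epsilon^{-1},-\epsilon]$ (and a fortiori $[-\epsilon^{-1},0]\subseteq(-\delta_i^{-1},0]$ for large $i$).

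\emph{The splitting map.} Let $x^1,\dots,x^k:\mathcal{X}\to\mathbb{R}$ be the $\mathbb{R}^k$-coordinates of hypothesis $(b)$; on $\mathcal{R}\cong\mathcal{R}'\times\mathbb{R}^k$ these are time-independent with $\square x^\ell=0$, $\langle\nabla x^\ell,\nabla x^m\rangle=\delta_{\ell m}$, and with respect to $\mu_t$ they have mean $0$, $\int x^\ell x^m\,d\mu_t=2\tau\,\delta_{\ell m}$, and Gaussian tails. Set $t_\ast:=-\epsilon^{-1}$. Using the cutoffs $\eta_{r_i}$ of Lemma~\ref{cutoff} and a spatial truncation at Euclidean radius $R_i$, with $r_i\searrow 0$ and $R_i\nearrow\infty$ chosen so that $R_i^2\,\mu_{t_\ast}(\{r_{Rm}\le 2r_i\})\to0$ (compatible by the codimension-four bound on the singular set) and $\int_{\{|x|>R_i\}}(x^\ell)^2\,d\mu_{t_\ast}\to0$, let $u_{i,\ell}$ be the pullback by $\psi_i^{-1}$ of $x^\ell$ times these cutoffs (and $0$ off $\psi_i(U_i)$); then $u_{i,\ell}$ is bounded and smooth on $M_{i,t_\ast}$, $u_{i,\ell}\to x^\ell$ in $C^\infty_{\mathrm{loc}}(\mathcal{R}_{t_\ast})$, $\int u_{i,\ell}u_{i,m}\,d\nu_{x_i,0;t_\ast}\to 2|t_\ast|\delta_{\ell m}$, and $\int u_{i,\ell}\,d\nu_{x_i,0;t_\ast}\to 0$. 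Solve $\square y_{i,\ell}=0$ on $M_i\times[t_\ast,0]$ with $y_{i,\ell}(\cdot,t_\ast)=u_{i,\ell}$ and subtract the ($t$-independent) constant $\int_{M_i}y_{i,\ell}\,d\nu_{x_i,0;t}\to0$; then $\square y_{i,\ell}=0$ and $\int_{M_i}y_{i,\ell}\,d\nu_{x_i,0;t}=0$ hold exactly, so $y_i=(y_{i,1},\dots,y_{i,k})$ is a candidate \emph{strong} splitting map and only the gradient condition remains. By interior parabolic estimates on $\mathcal{R}$ together with the convergence in Theorem~\ref{bamconvergence} and the uniform $L^2(d\nu)$-bounds on $y_{i,\ell}$ (preventing loss of mass into the singular set), $y_{i,\ell}\to x^\ell$ in $C^\infty_{\mathrm{loc}}(\mathcal{R})$, so $\langle\nabla y_{i,\ell},\nabla y_{i,m}\rangle\to\delta_{\ell m}$ pointwise on $\mathcal{R}$. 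Finally, applying the energy identity $\frac{d}{dt}\int_{M_i}u^2\,d\nu_{x_i,0;t}=-2\int_{M_i}|\nabla u|^2\,d\nu_{x_i,0;t}$ for $\square$-solutions to $u=y_{i,\ell}$ and $u=y_{i,\ell}\pm y_{i,m}$ shows that $\int_{-\epsilon^{-1}}^{-\epsilon}\int_{M_i}|\nabla u|^2\,d\nu_{x_i,0;t}\,dt$ converges to the corresponding integral against $\mu_t$; since these integrands are nonnegative, converge a.e.\ on the full-measure set $\mathcal{R}$, and have convergent integrals, a Scheff\'e-type argument against the varying measures $\nu_{x_i,0;t}$ yields $L^1$-convergence, and polarizing over the three choices of $u$ gives $\langle\nabla y_{i,\ell},\nabla y_{i,m}\rangle\to\delta_{\ell m}$ in $L^1(d\nu_{x_i,0;t}\,dt)$. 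Thus $y_i$ is a (strong, hence weak) $(k,\epsilon,1)$-splitting map for large $i$.

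\emph{Static case and the main obstacle.} If $Rc(g)=0$ then $R\equiv 0$ and $\nabla^2 f=\frac{1}{2\tau}g$ on $\mathcal{R}$; combining the soliton-defect estimate with the scalar curvature lower bound, the convergence $R_{g_i}\to 0$ on $\mathcal{R}$, and the $L^p$-bounds (\ref{spacetime}) (available once selfsimilarity with small parameter is in hand) to rule out concentration, one obtains $\int_{M_i}R_{g_i}\,d\nu_{x_i,0;t}\to 0$, and then the soliton trace identities force $\int_{-\epsilon^{-1}}^{-\epsilon}\int_{M_i}|Rc_{g_i}|^2\,d\nu_{x_i,0;t}\,dt\to 0$, giving $(\epsilon,1)$-staticity. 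The main obstacle is the splitting step, and within it the two points just indicated: coordinating $r_i\to 0$ with $R_i\to\infty$ so that the truncated initial data still converges strongly enough to $x^\ell$ — here the codimension-four estimate and the Gaussian decay of the conjugate heat kernels are both essential — and upgrading a.e.\ convergence of the gradient inner products, together with convergence of the energies, to genuine $L^1$-convergence against the measures $\nu_{x_i,0;t}$.
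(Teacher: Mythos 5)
Your construction of the splitting map is a genuinely different route from the paper's. The paper argues by contradiction: it picks points $(z_j',t_j)$ and shifted points $(z_j'^{,\alpha},t_j)$ approximating $(z_j,0^k)$ and $(z_j,e^\alpha)$ in the limit, and forms $u_j^\alpha := 2\tau_j(f_j^\alpha - f_j') - \text{const}$ from the conjugate heat kernel potentials based at these two points; these differences are known, via Proposition 10.8 of \cite{bamlergen3} together with the conjugate-heat-kernel comparison of Proposition 8.1 there, to give almost-splitting maps, and the limiting normalization $q_{\alpha\beta}^j\to\delta_{\alpha\beta}$ is established by comparing against the explicit model potentials $f'+\frac{1}{4\tau}|x-e^\alpha|^2$. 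This pipeline completely sidesteps solving an auxiliary heat equation and identifying its limit. By contrast, you truncate the $\mathbb{R}^k$-coordinates of the limit, pull them back, solve $\square y_{i,\ell}=0$ with that initial data, and try to show $y_{i,\ell}\to x^\ell$ in $C^\infty_{\mathrm{loc}}(\mathcal{R})$.

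The genuine gap in your argument is precisely at that identification step. Interior parabolic estimates plus the $L^2(d\nu_{x_i,0;t})$ bounds give a subsequential $C^\infty_{\mathrm{loc}}(\mathcal{R})$-limit $y_{\infty,\ell}$, which is $\square$-caloric on $\mathcal{R}$ with $y_{\infty,\ell}(\cdot,t_\ast)=x^\ell$. But $\mathcal{R}$ is an incomplete Ricci flow spacetime, and caloric functions on an incomplete spacetime with prescribed initial data need not be unique --- the standard energy argument $\frac{d}{dt}\int w^2\,d\mu_t=-2\int|\nabla w|^2\,d\mu_t$ (for $w=y_{\infty,\ell}-x^\ell$) requires an integration by parts which can produce boundary terms at the singular set unless one has quantitative capacity/non-concentration estimates there (and these are what the codimension-four bound is \emph{used} to prove, not what it gives for free). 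Without this, $y_{\infty,\ell}=x^\ell$ is unjustified, and then the rest of the pipeline --- the convergence of $\int y_{i,\ell}^2\,d\nu_{-\epsilon}$, the convergence of the total Dirichlet energy $\int\int|\nabla y_{i,\ell}|^2\,d\nu_t\,dt$ to $\epsilon^{-1}-\epsilon$, and the Scheff\'e-type upgrade (which itself needs the varying-measures tightness you gesture at) --- all rest on this unproved identity. You have correctly isolated the pressure points, but the uniqueness statement is a substantive lemma and not a consequence of the $L^2$-bounds alone. The paper's use of heat-kernel-potential differences is exactly the device that makes this identification automatic, because Bamler has already packaged the relevant uniqueness/regularity content into Proposition 10.8. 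The selfsimilarity and static cases are handled essentially the same way in both arguments (Nash entropy convergence and Perelman monotonicity, Hamilton's lower bound for $R$), though the paper simply cites Claim 22.7 of \cite{bamlergen3} for the static case where your sketch again needs a non-concentration step.
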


\begin{proof}
By Nash entropy convergence (Theorem 15.45 of \cite{bamlergen3}), assumption $(d)$ and Proposition 7.1 of \cite{bamlergen3} imply that $(x_i,0)$ is $(\epsilon,1)$-selfsimilar for sufficiently large $i\in \mathbb{N}$. If $Rc(g)=0$, then $(x_i,0)$ is $(\epsilon,1)$-static for large $i \in \mathbb{N}$ by Claim 22.7 of \cite{bamlergen3}.  Suppose the remaining claim fails, so that there exists $\epsilon>0$ such that, after passing to a subsequence, $(x_i,0)$ is not $(k,\epsilon,1)$-split. Fix a correspondence $\mathfrak{C}$ such that 
\begin{equation}
(M_{i}^{n},(g_{i,t})_{t\in(-\delta_{i}^{-1},0]},(\nu_{x_{i},0;t})_{t\in(\delta_{i}^{-1},0]})\xrightarrow[i\to\infty]{\mathbb{F},\mathfrak{C}}(\mathcal{X},(\nu_{x_{\infty};t})_{t\in(-\infty,0]})\label{eq:splitFcon}
\end{equation}
on compact time intervals. By passing to a subsequence, we
can assume the convergence is time-wise for all times in some subset
$I'\subseteq(-\infty,0)$, where $|\mathbb{R}\setminus I'|=0$. 

Choose a sequence $t_{j}\nearrow0$, and recall that 
\[
H_{n}|t|\geq\text{Var}(\mu_{t})=\text{Var}(\mu_{t}')+\text{Var}(\mu_{t}^{\mathbb{R}^{k}})\geq\text{Var}(\mu_{t}'),
\]
so we can find $z_{j}\in\mathcal{X}_{t_{j}}'$ such that $\text{Var}(\delta_{z_{j}},\mu_{t_{j}}')<H_{n}|t_{j}|$.
Then 
\begin{align*}
d_{W_{1}}^{\mathcal{X}_{t_{j}}}(\delta_{(z_{j},0^{k})},\mu_{t_{j}})= & d_{W_{1}}^{\mathcal{X}_{t_{j}}'\times\mathbb{R}^{k}}(\delta_{z_{j}}\otimes\delta_{0^{k}},\mu_{t_{j}}'\otimes\mu_{t_{j}}^{\mathbb{R}^{k}})\leq d_{W_{1}}^{\mathcal{X}_{t_{j}}'}(\delta_{z_{j}},\mu_{t_{j}}')+d_{W_{1}}^{\mathbb{R}^{k}}(\delta_{0^{k}},\mu_{t_{j}}^{\mathbb{R}^{k}})\leq2H_{n}|t_{j}|.
\end{align*}
Fix $\tau>0$. Letting $e^{\alpha}\in\mathbb{R}^{k}$ be the standard
basis vectors, we can find $z_{j,i}^{\alpha},z_{j,i}\in M_{i}$ such that
$(z_{j,i}^{\alpha},t_{0})\xrightarrow[i\to\infty]{\mathfrak{C}}(z_{j},e^{\alpha})$ and $(z_{j,i},t_{j})\xrightarrow[i\to\infty]{\mathfrak{C}}(z_{j},0^{k})$,
so that there are subsets $E_{j,i}\subseteq[-\tau^{-1},0]$ such that
$\lim_{i\to\infty}|E_{j,i}|=0$, $[-\tau^{-1},t_j]\setminus E_{j,i} \subseteq I''^{,j}$ and
\[
\lim_{i\to\infty}\sup_{t\in[-\tau^{-1},t_{j}]\setminus E_{j,i}}d_{W_{1}}^{Z_{t}}\left((\varphi_{t}^{i})_{\ast}\nu_{(z_{j,i}^{\alpha},t_{j});t}^{i},(\varphi_{t}^{\infty})_{\ast}\nu_{(z_{j},e^{\alpha});t}^{i}\right)=0
\]
\[
\lim_{i\to\infty}\sup_{t\in[-\tau^{-1},t_{j}]\setminus E_{j,i}}d_{W_{1}}^{Z_{t}}\left((\varphi_{t}^{i})_{\ast}\nu_{(z_{j,i},t_{j});t}^{i},(\varphi_{t}^{\infty})_{\ast}\nu_{(z_{j},0^{k});t}^{i}\right)=0.
\]
We can pass to further subsequences and use a diagonal argument to
obtain subsets $E_{j}\subseteq[-j,0]$ with $[t_{j},0]\subseteq E_{j}$
such that $|E_{j}|\leq2^{-j}$, $[-\tau^{-1},0]\setminus E_j \subseteq I''^{,j}$, and $z_{j}',z_j'^{,\alpha}\in M_{j}$ such that
\[
\sup_{t\in[-j,0]\setminus E_{j}}d_{W_{1}}^{Z_{t}}\left((\varphi_{t}^{j})_{\ast}\nu_{(z_{j}',t_{j});t}^{j},(\varphi_{t}^{\infty})_{\ast}\nu_{(z_{j},0^{k});t}\right)\leq2^{-j},
\]
\[
\sup_{t\in[-j,0]\setminus E_{j}}d_{W_{1}}^{Z_{t}}\left((\varphi_{t}^{j})_{\ast}\nu_{(z_{j}'^{,\alpha},t_{j});t}^{j},(\varphi_{t}^{\infty})_{\ast}\nu_{(z_{j},e^{\alpha});t}\right)\leq2^{-j}
\]
We now show 
$(\nu_{(z_{j}',t_{j});t}^{j})_{t\in[-j,t_{j}]}\xrightarrow[i\to\infty]{\mathfrak{C}}(\mu_{t})_{t\in(-\infty,0)}$ uniformly
on compact time intervals. In fact 
\begin{align*}
d_{W_{1}}^{Z_{t}}\left((\varphi_{t}^{j})_{\ast}\nu_{(z_{j}',t_{j});t}^{j},(\varphi_{t}^{\infty})_{\ast}\mu_{t}\right)\leq & d_{W_{1}}^{Z_{t}}\left((\varphi_{t}^{j})_{\ast}\nu_{(z_{j}',t_{j});t}^{j},(\varphi_{t}^{\infty})_{\ast}\nu_{(z_{j},0^{k});t}\right)+d_{W_{1}}^{\mathcal{X}_{t}}(\nu_{(z_{j},0^{k});t},\mu_{t})\\
\leq & 2^{-j}+2H_{n}|t_{j}|
\end{align*}
for all $t\in[-j,0]\setminus E_{j}$. Similarly, for any $\tau>0$, for sufficiently large $j\in\mathbb{N}$,
we have
\[
d_{W_{1}}^{\mathcal{X}_{t_{j}}}(\delta_{(z_{j},e^{\alpha})},\mu_{t_{j}}'\otimes\nu_{e^{\alpha};t_{j}}^{\mathbb{R}^{k}})\leq d_{W_{1}}^{\mathcal{X}_{t_{j}}}(\delta_{z_{j}},\mu_{t_{j}}')+d_{W_{1}}^{\mathcal{X}_{t_{j}}}(\delta_{e^{\alpha}},\nu_{e^{\alpha};t_{j}}^{\mathbb{R}^{k}})\leq2H_{n}|t_{j}|,
\]
so by repeating the above reasoning with $(z_{j}',t_{j})$ replaced
by $(z_{j}'^{,\alpha},t_{j})$, we can also assume that 
$$(\nu_{(z_{j}'^{,\alpha},t_{j});t})_{t\in[-T_{j},0]}\xrightarrow[j\to\infty]{\mathfrak{C}}(\mu_{t}'\otimes\nu_{e^{\alpha};t}^{\mathbb{R}^{k}})_{t\in(-\infty,0)}$$
on compact time intervals. Because $(\nu_{x_{j},0;t})_{t\in[-T_{j},0]}\xrightarrow[j\to\infty]{\mathfrak{C}}(\mu_{t})_{t\in(-\infty,0)}$,
we can pass to a further subsequence to find subsets $E_{j}'$ with
$|E_{j}'|\leq2^{-j}$ such that 
\[ 
d_{W_{1}}^{Z_{t}}\left((\varphi_{t}^{\infty})_{\ast}\mu_{t},(\varphi_{t}^{j})_{\ast}\nu_{(x_{j},0);t}^{j}\right)\leq2^{-j}
\]
for all $t\in[-j,0]\setminus E_{j}'$. It follows that 
\begin{equation} \label{harmonicconvergence} d_{W_{1}}^{g_{j,t}}(\nu_{(z_{j}',t_{j});t}^{j},\nu_{(x_{j},0);t}^{j})\leq d_{W_{1}}^{Z_{t}}\left((\varphi_{t}^{j})_{\ast}\nu_{(z_{j}',t_{j});t}^{j},(\varphi_{t}^{\infty})_{\ast}\mu_{t}\right)+d_{W_{1}}^{Z_{t}}\left((\varphi_{t}^{\infty})_{\ast}\mu_{t},(\varphi_{t}^{j})_{\ast}\nu_{(x_{j},0);t}^{j}\right)\leq2^{-j+1} \end{equation}
for all $t\in[-j,0]\setminus(E_{j}\cup E_{j}')$, where $|E_{j}\cup E_{j}'|\leq2^{-j+1}$.

Let $\psi_{j}:U_{j}\to V_{j}\subseteq M_{j}\times [-j,0]$ be time-preserving diffeomorphisms from
a precompact exhaustion $(U_{j})$ of $\mathcal{R}$ realizing smooth convergence as in Theorem \ref{bamconvergence}. Then $\psi_{j}^{\ast}K(z_{j}',t_{j};\cdot,\cdot)\to(4\pi\tau)^{-\frac{n}{2}}e^{-f}$ in
$C_{loc}^{\infty}(\mathcal{R})$ and $f=f'+\frac{1}{4\tau}|x|^{2}$,  so we have the following convergence in $C_{loc}^{\infty}(\mathcal{R})$:
\[
\psi_{j}^{\ast}K(z_{j}',t_{j};\cdot,\cdot)\to(4\pi\tau)^{-\frac{n}{2}}e^{-f'-\frac{|x|^{2}}{4\tau}},
\]
\[
\psi_{j}^{\ast}K(z_{j}'^{,\alpha},t_{j};\cdot,\cdot)\to(4\pi\tau)^{-\frac{n}{2}}e^{-f'-\frac{|x-e^{\alpha}|^{2}}{4\tau}}.
\]
Writing $K(z_{j}',t_{j};\cdot,\cdot)=(4\pi\tau_{j})^{-\frac{n}{2}}e^{-f_{j}}$
and $K(z_{j}'^{,\alpha},t_{j};\cdot,\cdot)=(4\pi\tau_{j})^{-\frac{n}{2}}e^{-f_{j}^{\alpha}}$, this means
\[
\psi_{j}^{\ast}f_{j}\to f'+\frac{|x|^{2}}{4\tau},\qquad\psi_{j}^{\ast}f_{j}^{\alpha}\to f'+\frac{|x-e^{\alpha}|^{2}}{4\tau}
\]
in $C_{loc}^{\infty}(\mathcal{R})$ . 

\noindent \textbf{Claim 1: }For any $\delta'>0$ when $j=j(\delta')$
is sufficiently large, $(z_{j}',t_{j})$ and $(z_{j}'^{,\alpha},t_{j})$
are $(\delta',1)$-selfsimilar. 

Because $\int_{\mathcal{R}_{-\tau}'}f'd\mu_{t}'-\frac{(n-k)}{2}=W$ for all $t\in (-\infty,0)$, the proof of Corollary 15.47
in \cite{bamlergen3} gives the following for any $\tau\in(0,\infty)$:
\begin{align*}
\lim_{j\to\infty}\mathcal{N}_{z_{j}';t_{j}}(\tau)= & \int_{\mathcal{R}_{-\tau}}fd\mu_{t}-\frac{n}{2}=\int_{\mathcal{R}_{-\tau}'\times\mathbb{R}^{k}}\left(f'+\frac{|x|^{2}}{4\tau}\right)d(\mu_{t}'\otimes\mu_{t}^{\mathbb{R}^{k}})-\frac{n}{2}\\
= & \int_{\mathcal{R}_{-\tau}'}f'd\mu_{t}'-\frac{(n-k)}{2}=:W
\end{align*}
and similarly, 
\[
\lim_{j\to\infty}\mathcal{N}_{z_{j}'^{,\alpha};t_{j}}(\tau)=\int_{\mathcal{R}_{-\tau}'\times\mathbb{R}^{k}}\left(f'+\frac{|x-e^{\alpha}|^{2}}{4\tau}\right)d(\mu_{t}'\otimes\mu_{t}^{\mathbb{R}^{k}})-\frac{n}{2}=W.
\]
The claim follows by choosing $\tau$ sufficiently large and appealing
to Proposition 7.1 of \cite{bamlergen3}. $\square$

Choose $t\in[-1,-\frac{1}{2}]\cap I'$. Then 
\begin{align*}
d_{W_{1}}^{g_{j,-1}}\left(\nu_{z_{j}'^{,\alpha},t_{j};-1}^{j},\nu_{z_{j}',t_{j};-1}^{j}\right)\leq & d_{W_{1}}^{Z_{t}}\left((\varphi_{t}^{j})_{\ast}\nu_{z_{j}'^{,\alpha},t_{j};t}^{j},(\varphi_{t}^{\infty})_{\ast}(\mu_{t}'\otimes\nu_{(e^{\alpha},0^{k});t}^{\mathbb{R}^{k}})\right)\\&+d_{W_{1}}^{Z_{t}}\left((\varphi_{t}^{j})_{\ast}\nu_{z_{j}',t_{j};t}^{j},(\varphi_{t}^{\infty})_{\ast}(\mu_{t}'\otimes\mu_{t}^{\mathbb{R}^{k}})\right) +d_{W_{1}}^{\mathbb{R}^{k}}\left(\nu_{e^{\alpha},0;t}^{\mathbb{R}^{k}},\mu_{t}^{\mathbb{R}^{k}}\right).
\end{align*}
Because $d_{W_{1}}^{\mathbb{R}^{k}}\left(\nu_{e^{\alpha},0;t}^{\mathbb{R}^{k}},\mu_{t}^{\mathbb{R}^{k}}\right)\leq 1$ and the $\mathbb{F}$-convergence (\ref{eq:splitFcon}) is timewise at time $t$, we have $$d_{W_{1}}^{g_{j,-1}}\left(\nu_{z_{j}'^{,\alpha},t_{j};-1}^{j},\nu_{z_{j}',t_{j};-1}^{j}\right)\leq 2$$
for sufficiently large $j\in\mathbb{N}$. Now fix $\delta>0$ and
$\beta>0$ small. Taking $\delta'\leq\overline{\delta}'(\delta,n,Y)$
in Claim 1, we may therefore appeal
to the proof of Proposition 10.8 in \cite{bamlergen3} (up to Claim
10.31) to conclude that, setting $W_{j}':=\mathcal{N}_{z_{j}',t_{j}}(1)$,
$W_{j}^{\alpha}:=\mathcal{N}_{z_{j}'^{,\alpha},t_{j}}(1)$ , the functions
\[
u_{j}^{\alpha}:=2\tau_{j}(f_{j}^{\alpha}-f_{j}')-2\tau_{j}(W_{j}^{\alpha}-W_{j}')
\]
satisfy the following properties for all $j=j(\delta)\in \mathbb{N}$ sufficiently
large, assuming $\gamma \leq \overline{\gamma}$:

$(i)$ $\int_{-\delta^{-1}}^{-\delta}\int_{M_{j}}\left(\tau_{j}^{-\frac{1}{2}}|\partial_{t}u_{j}^{\alpha}|+|\nabla^{2}u_{j}^{\alpha}|^{2}\right)e^{\gamma f_{j}'}d\nu_{z_{j}',t_{j};t}^{j}dt\leq\delta$,

$(ii)$ $\int_{-\delta^{-1}}^{-\delta}\int_{M_{j}}\tau_{j}^{-1}|\nabla u_{j}^{\alpha}|^{4}e^{2\gamma f_{j}'}d\nu_{z_{j}',t_{j};t}^{j}dt\leq C(Y,\gamma),$

$(iii)$ $\int_{-\delta^{-1}}^{-\delta}\int_{M_{j}}|\langle\nabla u_{j}^{\alpha},\nabla u_{j}^{\beta}\rangle-q_{\alpha\beta}^j|d\nu_{z_j',t_j;t}^jdt\leq\delta$ for some $q_{\alpha \beta}^j \in \mathbb{R}$.

\noindent Moreover, for any $b>0$, we can combine $(ii),(iii)$ to
estimate 
\begin{align*}
\int_{-2\epsilon^{-1}}^{-\frac{1}{2}\epsilon}\int_{M_{j}}|\langle\nabla u_{j}^{\alpha},\nabla u_{j}^{\beta}\rangle-q_{\alpha\beta}^j|e^{\gamma f_{j}'}d\nu_{t}dt\leq & \frac{1}{b}\int_{-2\epsilon^{-1}}^{-\frac{1}{2}\epsilon}\int_{M_{j}}|\langle\nabla u_{j}^{\alpha},\nabla u_{j}^{\beta}\rangle-q_{\alpha\beta}^j|d\nu_{t}dt\\&+b\int_{-2\epsilon^{-1}}^{-\frac{1}{2}\epsilon}\int_{M_{j}}|\langle\nabla u_{j}^{\alpha},\nabla u_{j}^{\beta}\rangle-q_{\alpha\beta}^j|e^{2\gamma f_{j}'}d\nu_{z_{j}',t_{j};t}^{j}\\
\leq & b^{-1}\delta+C(\epsilon,Y,\gamma)b.
\end{align*}
For any $\delta'>0$, we can therefore choose $b\leq\overline{b}(\delta',Y,\gamma)$
and then $\delta\leq\overline{\delta}(\delta',Y,\gamma)$ so that

$(iii')$ $\int_{-2\epsilon^{-1}}^{-\frac{1}{2}\epsilon}\int_{M_{j}}|\langle\nabla u_{j}^{\alpha},\nabla u_{j}^{\beta}\rangle-q_{\alpha\beta}^j|e^{\gamma f_{j}'}d\nu_{t}dt\leq\delta'$ for some $q_{\alpha \beta}^j \in \mathbb{R}$\\
whenever $j=j(\epsilon,\delta')\in \mathbb{N}$ is sufficiently large.\\
\noindent \textbf{Claim 2: } For any $\delta''>0$, twe have $|q_{\alpha\beta}^j-\delta_{\alpha\beta}|\leq\delta''$
for sufficiently large $j=j(\delta'')\in\mathbb{N}$. 

Fix any compact set $K\subseteq\mathcal{R}_{[-2,-1]}$, and set $K_t := K\cap \mathcal{R}_t$ or $t\in [-2,-1]$. For sufficiently
large $j$, we can estimate
\begin{align*}
\int_{-2}^{-1}\int_{M_{j}}|\langle\nabla u_{j}^{\alpha},\nabla u_{j}^{\beta}\rangle-\delta_{\alpha\beta}|d\nu_{z_{j}',t_{j};t}^{j}dt \hspace{-40 mm} & \\ =& \int_{-2}^{-1}\int_{\psi_{j,t}(K_t)}|\langle\nabla u_{j}^{\alpha},\nabla u_{j}^{\beta}\rangle-\delta_{\alpha\beta}|d\nu_{z_{j}',t_{j};t}^{j}dt +\int_{-2}^{-1}\int_{M_{j}\setminus\psi_{j,t}(K_t)}|\langle\nabla u_{j}^{\alpha},\nabla u_{j}^{\beta}\rangle-\delta_{\alpha\beta}|d\nu_{z_{j}',t_{j};t}^{j}dt\\
\leq & \int_{-2}^{-1}\int_{K_t}2\tau_{j}\left|\langle\nabla(f_{j}^{\alpha}-f_{j}),\nabla(f_{j}^{\beta}-f_{j})\rangle\circ\psi_{j}-\delta_{\alpha\beta}\right|\left(K(z_{j}',t_{j};\cdot,\cdot)\circ\psi_{j}\right)d(\psi_{j,t}^{\ast}g_{j,t})dt\\
 & +\left(\int_{-2}^{-1}\int_{M_{j}\setminus\psi_{j}(K)}|\langle\nabla u_{j}^{\alpha},\nabla u_{j}^{\beta}\rangle-\delta_{\alpha\beta}|^{2}d\nu_{z_{j}',t_{j};t}^{j}dt\right)^{\frac{1}{2}}\left(\int_{-2}^{-1}\nu_{z_{j}',t_{j};t}^{j}(M\setminus\psi_{j}(K))dt\right)^{\frac{1}{2}}\\
\leq & \sup_{K}\left|2\tau_{j}\langle\nabla(f_{j}^{\alpha}-f_{j}),\nabla(f_{j}^{\beta}-f_{j})\rangle\circ\psi_{j}-\delta_{\alpha\beta}\right|\\
 & +C(Y,\gamma)\left(1-\int_{-2}^{-1}\nu_{z_{j}',t_{j};t}^{j}(\psi_{j,t}(K_t))dt\right)^{\frac{1}{2}},
\end{align*}
where we used estimate $(ii)$ to obtain the last inequality. Next, we observe that as $j\to\infty$, $2\tau_{j}\langle\nabla(f_{j}^{\alpha}-f_{j}),\nabla(f_{j}^{\beta}-f_{j})\rangle\circ\psi_{j}$ converges in $C_{loc}^{\infty}(\mathcal{R})$ to
\begin{align*}
 & 2\tau\left\langle \nabla\left(f'+\frac{|x-e^{\alpha}|^{2}}{4\tau}\right)-\nabla\left(f'+\frac{|x|^{2}}{4\tau}\right),\nabla\left(f'+\frac{|x-e^{\beta}|^{2}}{4\tau}\right)-\nabla\left(f'+\frac{|x|^{2}}{4\tau}\right)\right\rangle \\
 &\hspace{12 mm}= \left\langle (x-e^{\alpha})-x,(x-e^{\beta})-x\right\rangle =\delta_{\alpha\beta}
\end{align*}
Moreover, we have 
\[
\int_{-2}^{-1}\nu_{z_{j}',t_{j};t}^{j}(\psi_{j,t}(K_t))dt=\int_{-2}^{-1}\int_{K_t}\left(K(z_{j}',t_{j};\cdot,t)\circ\psi_{j,t}\right)d(\psi_{j,t}^{\ast}g_{j,t})dt\to\int_{-2}^{-1}\mu_{t}(K)dt.
\]
Combining expressions, we get
\begin{align*}
\limsup_{j\to \infty} |\delta_{\alpha\beta}-q_{\alpha\beta}^j|\leq & \limsup_{j\to \infty}\int_{-2}^{-1}\int_{M_{j}}|\langle\nabla u_{j}^{\alpha},\nabla u_{j}^{\beta}\rangle-\delta_{\alpha\beta}|d\nu_{z_{j}',t_{j};t}^{j}dt\\ &+\limsup_{j\to \infty}\int_{-2}^{-1}\int_{M_{j}}|\langle\nabla u_{j}^{\alpha},\nabla u_{j}^{\beta}\rangle-q_{\alpha\beta}^j|d\nu_{z_{j}',t_{j};t}^{j}dt\\
\leq & C(\gamma,n) \left(1-\int_{-2}^{-1}\mu_{t}(K)dt\right)^{\frac{1}{2}}+\delta'
\end{align*}
for any compact set $K\subseteq\mathcal{R}_{[-2,-1]}$. Choosing a compact exhaustion of $\mathcal{R}_{[-2,-1]}$ and $\delta'\leq\overline{\delta}'(\delta'')$
then gives the claim. $\square$

By the $W_1$-distance estimate (\ref{harmonicconvergence}), we can apply Proposition 8.1 of \cite{bamlergen3} with the following choice of parameters: $\alpha_{0}=\gamma$, $\alpha_{1}=0$, $t_{0}=t_{j}$,
$t_{1}=0$, $D=1$, $s\in[-\epsilon^{-1},-\epsilon]$, $t^{\ast}=\frac{1}{2}\theta(1)\gamma \epsilon$, where $\theta$ is defined in the aforementioned proposition;
then for $j\in\mathbb{N}$ sufficiently large, we have 
\[
d\nu_{x_{j},0;s}^{j}\leq C(Y,\gamma)e^{\gamma f_{j}'}d\nu_{z_{j}',t_{j};s}^{j}
\]
for all $s\in[-\epsilon^{-1},-\epsilon]$. Combining this with $(i),(iii')$, the $(\delta,1)$-selfsimilarity of $(z_i',t_i)$, and taking $\delta$ sufficiently small gives the desired contradiction.
\end{proof}

Next, we verify that an $\mathbb{F}$-limit of metric solitons splitting $\mathbb{R}^k$ is also a metric soliton splitting $\mathbb{R}^k$.

\begin{lem} \label{diagonal}
Suppose $(\mathcal{X}_{i},(\mu_t^i)_{t\in(-\infty,0)})$ is a sequence of metric solitons satisfying the following:
\begin{itemize}
\item[$(a)_i$] Each pair $(\mathcal{X}_{i},(\mu_t^i)_{t\in(-\infty,0)})$ is an $\mathbb{F}$-limit of $n$-dimensional closed Ricci flows as in Theorem \ref{bamconvergence}, with Nash entropy lower bound $-Y$,
\item[$(b)_i$] $(\mathcal{X}_i,(\mu_t^i)_{t\in (-\infty,0)}) \cong (\mathcal{X}_i'\times \mathbb{R}^k,(\mu_t '^{,i}\otimes \mu_{\mathbb{R}^k})_{t\in (-\infty,0)})$ as metric flow pairs for some metric solitons $(\mathcal{X}_i',(\mu_t '^{,i})_{t\in (-\infty,0)})$, and this identification restricts to an isometry of Ricci flow spacetimes $\mathcal{R}_i \cong \mathcal{R}_i'\times \mathbb{R}^k$,\\
\item[$(c)_i$] Writing $d\mu_t^i = (4\pi \tau)^{-\frac{n}{2}}e^{-f_i}dg_{i}$ on $\mathcal{R}_i$ and $d\mu_t'^{,i} = (4\pi \tau)^{-\frac{n}{2}}e^{-f_i'}dg_{i}'$ on $\mathcal{R}_i'$, we have $Rc(g_{i})+\nabla^2 f_i =\frac{1}{2\tau}g_{i}$ on $\mathcal{R}_i$, $Rc(g_{i}')+\nabla^2 f_i' =\frac{1}{2\tau}g_{i}'$ on $\mathcal{R}_{i}'$, and $f_i =f_i'+\frac{1}{4\tau}|x|^2$ on $\mathcal{R}_i$.
\item[$(d)_i$] $\mathcal{N}_{(\mu_{i,t})}(\tau)=W_i$ for all $\tau >0$, where $W_i \in [-Y,0]$.
\end{itemize}
Assume that $(\mathcal{X}_{i},(\mu_t^i)_{t\in(-\infty,0)})$ $\mathbb{F}$-converge to another metric flow pair $(\mathcal{X}_{\infty},(\mu_t^{\infty})_{t\in(-\infty,0)})$. Then $(\mathcal{X},(\mu_t^{\infty})_{t\in(-\infty,0)})$ is a metric soliton satisfying $(a)-(d)$ of Proposition \ref{getsplit}. 

If in addition $Rc(g_i')=0$, then we have $(\mathcal{X}_{\infty},(\mu_t^{\infty})) \cong (\mathcal{X}''\times \mathbb{R}^k,(\nu_{x'';t})_{t\in (-\infty ,0)})$ as metric flow pairs for some static metric cone $\mathcal{X}''$ with vertex $x''$. Moreover, this identification restricts to an isometry of Ricci flow spacetimes $\mathcal{R}^{\infty}\cong \mathcal{R}''\times \mathbb{R}^k$, and $Rc(g^{\infty})=0$, $Rc(g'')=0$.
\end{lem}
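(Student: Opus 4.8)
The plan is to realize $(\mathcal{X}_{\infty},(\mu_t^{\infty}))$ directly as an $\mathbb{F}$-limit of closed Ricci flows by diagonalization, and to produce its $\mathbb{R}^k$-factor as an $\mathbb{F}$-limit of the factors $\mathcal{X}_i'$. First, by $(a)_i$ each $(\mathcal{X}_i,(\mu_t^i))$ is, on compact time intervals, an $\mathbb{F}$-limit of closed Ricci flows $(M_{i,\ell},(g_{i,\ell,t}),(\nu_{x_{i,\ell},0;t}))$ with $\mathcal{N}_{x_{i,\ell},0}(1)\geq -Y$; since $\mathbb{F}$-distance is a pseudometric and $\mathcal{X}_i\to\mathcal{X}_{\infty}$, choosing $\ell(i)$ so that the $\mathbb{F}$-distance (restricted to $[-i,0]$) between $M_{i,\ell(i)}$ and $\mathcal{X}_i$ is at most $i^{-1}$ yields a sequence of closed Ricci flows $\mathbb{F}$-converging to $(\mathcal{X}_{\infty},(\mu_t^{\infty}))$ on compact time intervals with $\mathcal{N}\geq -Y$, so $\mathcal{X}_{\infty}$ satisfies $(a)$ of Proposition \ref{getsplit}. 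After passing to a subsequence we may assume $W_i\to W$, whence $W\in[-Y,0]$; by Nash entropy convergence (Theorem 15.45 of \cite{bamlergen3}) we get $\mathcal{N}_{\mu_t^{\infty}}(\tau)=\lim_i W_i=W$ for all $\tau>0$, which is $(d)$, and since this is constant in $\tau$, $(\mathcal{X}_{\infty},(\mu_t^{\infty}))$ is a metric soliton by \cite{bamlergen3}.

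Next I would split off the $\mathbb{R}^k$-factor. By $(c)_i$ and the computation in the proof of Proposition \ref{getsplit} (using $f_i=f_i'+\tfrac{1}{4\tau}|x|^2$ and that the Gaussian soliton on $\mathbb{R}^k$ has vanishing Nash entropy) the Nash entropy of $(\mathcal{X}_i',(\mu_t'^{,i}))$ equals $W_i$, and, being factors of the $H_n$-concentrated flows $\mathcal{X}_i$, the $\mathcal{X}_i'$ are themselves $H_n$-concentrated, so Bamler's compactness theorem (\cite{bamlergen2}) lets us pass to a subsequence along which $\mathcal{X}_i'\xrightarrow{\mathbb{F}}(\mathcal{X}',(\mu_t'))$ on compact time intervals. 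Applying Lemma \ref{lem:prodwass} timewise to couplings realizing the two convergences — and using that the conjugate heat kernel on a product metric flow is the product of the conjugate heat kernels — one obtains $\mathcal{X}_i'\times\mathbb{R}^k\xrightarrow{\mathbb{F}}\mathcal{X}'\times\mathbb{R}^k$; since $(b)_i$ identifies $\mathcal{X}_i$ with $\mathcal{X}_i'\times\mathbb{R}^k$, uniqueness of $\mathbb{F}$-limits gives $(\mathcal{X}_{\infty},(\mu_t^{\infty}))\cong(\mathcal{X}'\times\mathbb{R}^k,(\mu_t'\otimes\mu_{\mathbb{R}^k}))$, and $\mathcal{X}'$ inherits a metric soliton structure from this isometric splitting; this is $(b)$ of Proposition \ref{getsplit}. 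To upgrade the splitting to an isometry of Ricci flow spacetimes $\mathcal{R}^{\infty}\cong\mathcal{R}'\times\mathbb{R}^k$, I would track the $k$ coordinate functions $y^{\alpha}$ on $\mathcal{R}_i$ cut out by the splitting $(b)_i$ (parallel spatial gradient, $\square y^{\alpha}=0$); these, together with the metrics and conjugate heat potentials, converge in $C^{\infty}_{loc}$ on the regular part under the diffeomorphisms of Theorem \ref{bamconvergence} (after diagonalizing the two-parameter family of approximants so all such convergences are simultaneous), so the limiting functions realize the spacetime splitting, compatibly with Bamler's identification of the regular part of a product as the product of regular parts. Finally, passing the soliton equations $Rc(g_i)+\nabla^2 f_i=\tfrac{1}{2\tau}g_i$, $Rc(g_i')+\nabla^2 f_i'=\tfrac{1}{2\tau}g_i'$ and $f_i=f_i'+\tfrac{1}{4\tau}|x|^2$ to the limit via this $C^{\infty}_{loc}$-convergence (Theorem \ref{bamconvergence}$(i),(iv)$) gives $(c)$ for $\mathcal{X}_{\infty}$ and $\mathcal{X}'$.

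For the static case, if $Rc(g_i')=0$ then each $(\mathcal{X}_i',(\mu_t'^{,i}))$ is a metric soliton with vanishing Ricci curvature, hence (by \cite{bamlergen3}, cf. Theorem \ref{tangentflows}) a static metric cone; writing $\mathcal{X}'':=\mathcal{X}'$, Ricci-flatness passes to the $C^{\infty}_{loc}$-limit so that $Rc(g'')=0$, and therefore also $Rc(g^{\infty})=0$. Thus $\mathcal{X}_{\infty}=\mathcal{X}''\times\mathbb{R}^k$ is a metric soliton with $Rc(g^{\infty})=0$ which is an $\mathbb{F}$-limit of closed noncollapsed Ricci flows, so by \cite{bamlergen3} it is a static metric flow modeled on a metric cone. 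A metric cone that splits off an isometric $\mathbb{R}^k$-factor is isometric to $\mathbb{R}^k$ times a metric cone, so cancelling the $\mathbb{R}^k$-factor in $\mathcal{X}_{\infty}\cong\mathcal{X}''\times\mathbb{R}^k$ shows $\mathcal{X}''$ is a static metric cone with some vertex $x''$, with potential $f''$ corresponding to $\tfrac{1}{4\tau}d^2(x'',\cdot)+W$, which is the claim.

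I expect the main obstacle to be the bookkeeping in the two nested diagonalizations — over the Ricci-flow approximants of each $\mathcal{X}_i$ and over the $\mathbb{F}$-limits $\mathcal{X}_i'\to\mathcal{X}'$ — and, more substantively, verifying that the $\mathbb{R}^k$-splitting of the \emph{Ricci flow spacetime} (not merely of the metric flow pair) survives the limit; this requires carefully combining the regular-part smooth convergence with Bamler's description of products of metric flows, whereas the soliton equations, the entropy identities, and the metric-flow-pair splitting pass to the limit routinely.
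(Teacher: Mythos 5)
Your route differs from the paper's in one essential respect, and this difference hides a genuine gap. You attempt to split off $\mathbb{R}^k$ by taking a separate $\mathbb{F}$-limit $\mathcal{X}_i'\to\mathcal{X}'$ of the factor flows and then identifying $\mathcal{X}_\infty\cong\mathcal{X}'\times\mathbb{R}^k$ via Lemma \ref{lem:prodwass} and uniqueness of limits. The paper instead never studies $\mathcal{X}_i'$ directly: it applies Proposition \ref{getsplit} to \emph{each} $\mathcal{X}_i$ to produce smooth closed Ricci flows $M_i$ that are $(\epsilon_i,1)$-selfsimilar and \emph{strongly} $(k,\epsilon_i,1)$-split (and $(\epsilon_i,1)$-static when $Rc(g_i')=0$), with $\epsilon_i\searrow 0$, and then lets Bamler's Theorem 15.50 (resp.\ Theorems 15.69, 15.80) convert these quantitative splitting properties of the smooth approximants into the spacetime splitting, soliton equation, and static-cone structure of the single limit $\mathcal{X}_\infty$. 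This sidesteps the factor flows entirely.

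The gap in your version is in the upgrade from the metric-flow-pair splitting $\mathcal{X}_\infty\cong\mathcal{X}'\times\mathbb{R}^k$ to the spacetime splitting $\mathcal{R}^\infty\cong\mathcal{R}'\times\mathbb{R}^k$ together with the soliton equation on $\mathcal{R}'$. Your $\mathcal{X}'$ arises from Bamler's general compactness for $H_n$-concentrated metric flow pairs applied to the possibly singular solitons $\mathcal{X}_i'$, so a priori $\mathcal{X}'$ is only a metric flow pair: nothing yet gives it a dense regular part carrying a Ricci flow spacetime structure, a smooth potential $f'$, or the soliton equation, because you have not shown $\mathcal{X}'$ is itself an $\mathbb{F}$-limit of closed noncollapsed Ricci flows (the $\mathcal{X}_i'$ are not smooth flows, and their smooth approximants are not provided by the hypotheses). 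Your fallback --- tracking coordinate functions $y^\alpha$ on $\mathcal{R}_i$ through the two-level diagonalization so that they converge on $\mathcal{R}^\infty$ --- is exactly the hard content: to make it work you must transfer $y^\alpha$ to strongly almost-splitting maps on the smooth approximants $M_{i,\ell}$, which is what Proposition \ref{getsplit} provides and what Theorem 15.50 of \cite{bamlergen3} is designed to consume. You correctly identify this as the main obstacle but do not resolve it; the paper's use of \ref{getsplit} is the missing step. (The static case has the same issue: the paper obtains $(\epsilon_i,1)$-staticity of the $M_i$ via Claim 22.7 and invokes Theorem 15.80, rather than arguing Ricci-flatness of $\mathcal{X}'$ whose regular part is not yet available.)
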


\begin{proof}
By passing to subsequences and using a diagonal argument, we may assume that 
$$(M_{i}^{n},(g_{i,t})_{t\in(-\epsilon_{i}^{-1},0]},(x_i,0))$$
is a sequence of closed, pointed Ricci flows such that $\mathcal{N}_{x_{i},0}(1)\geq-Y$,
\[
d_{\mathbb{F}}\left((M_{i},(g_{i,t})_{t\in(-\epsilon_{i}^{-1},0]},(\nu_{x_{i},0;t})_{t\in(-\epsilon_i^{-1},0]}),(\mathcal{X}_{i},(\mu_t^i)_{t\in(-\epsilon_i^{-1},0]})\right) \leq \epsilon_i,
\]
where $\lim_{i\to \infty}\epsilon_i =0$. By Proposition \ref{getsplit}, we may moreover assume that $(x_i,0)$ are $(\epsilon_i,1)$-selfsimilar and strongly $(k,\epsilon_i,1)$-split. In particular, 
$$(M_{i},(g_{i,t})_{t\in(-T_{i},0]},(\nu_{x_{i}',0;t})_{t\in(-T_{i},0]}) \xrightarrow[i\to \infty]{\mathbb{F}} (\mathcal{X}_{\infty},(\mu_t^{\infty})_{t\in(-\infty,0]}).$$
Then $\mathcal{X}$ satisfies $(a)$ by construction, $(b)$ by Theorem 15.50 in \cite{bamlergen3}, and $(d)$ by Proposition 7.1 in \cite{bamlergen3} and the Nash entropy convergence Theorem 15.45 of \cite{bamlergen3}. Moreover, we have $Rc(g)+\nabla^2 f=\frac{1}{2\tau}g$ on $\mathcal{R}$ by Theorem 15.69 of \cite{bamlergen3}. By $(b)$, we have
$$(4\pi \tau)^{-\frac{n}{2}}e^{-f}dg = \left( (4\pi \tau)^{-\frac{n-k}{2}}e^{-f'}dg'\right) \otimes \left( (4\pi \tau)^{-\frac{k}{2}}e^{-\frac{|x|^2}{4\tau}} \right)=(4\pi \tau)^{-\frac{n}{2}}e^{-f'-\frac{|x|^2}{4\tau}}dg$$
on $\mathcal{R}$, so that $f=f'+\frac{|x|^2}{4\tau}$. Thus $Rc(g),g,\nabla^2 f$ all split with respect to the decomposition $T\mathcal{R}=T\mathcal{R}'\oplus T\mathbb{R}^k$, so restricting $Rc(g)+\nabla^2 f = \frac{1}{2\tau}g$ to $T\mathcal{R}'$ gives $Rc(g')+\nabla^2 f' =\frac{1}{2\tau}g'$ on $\mathcal{R}'$.

Finally, suppose that $Rc(g_i')=0$ for all $i\in \mathbb{N}$. By the proof of Claim 22.7 in \cite{bamlergen3}, we can also assume $(x_i,0)$ are $(\epsilon_i,1)$-static. Then Theorem 15.80 of \cite{bamlergen3} guarantees the remaining claims. 
\end{proof}

We may now apply the previous results to prove the reverse qualitative inclusion of quantitative singular strata as that proved in \cite{bamlergen3}

\begin{prop} \label{equiv}
For any $\epsilon>0$, there exists $\delta=\delta(\epsilon,Y,A)>0$
such that the following holds. Suppose $(\mathcal{X},(\nu_{x_{\infty};t})_{t\in (-T,0]})$ is a metric flow pair obtained as an $\mathbb{F}$-limit of noncollapsed Ricci flows as in Theorem \ref{bamconvergence} (with $\mathcal{N}_{x_i,0}(1)\geq -Y$). Assume $y_{\infty}\in\mathcal{X}_{<0} \cap P^{\ast}(x_{\infty},A,-A^2)$ is $(k,\delta,r)$-symmetric and that $(M,(g_t)_{t\in [-\delta^{-1},0]},(y_0,0))$ is a closed pointed Ricci flow such that $|\mathcal{N}_{y_0,0}(1)-\mathcal{N}_{y_{\infty}}(1)|<\delta$ and
$$d_{\mathbb{F}}\left( (M,(g_t)_{t\in [-\delta^{-1},0]},(\nu_{y_0,0;t})_{t\in [-\delta^{-1},0]}),(\mathcal{X}^{-t_0,r^{-1}},(\nu_{y_\infty;t}^{-t_0,r^{-1}})_{t\in [-\delta^{-1},0]}) \right) <\delta ,$$
where $t_0 := \mathfrak{t}(y_{\infty})$. Then one of the following holds:

$(i)$ $(y_0,t_0)$ is $(k,\epsilon,1)$-split and $(\epsilon,1)$-selfsimilar,

$(ii)$ $(y_0,t_0)$ is $(k-2,\epsilon,1)$-split, $(\epsilon,1)$-static,
and $(\epsilon,1)$-selfsimilar.\\
In particular, $y_{\infty}$ is weakly $(k,\epsilon,1)$-symmetric, and $\widehat{\mathcal{S}}_{r_1,r_2}^{\epsilon,k} \subseteq \mathcal{S}_{r_1,r_2}^{\delta(\epsilon,Y,A),k}$. Moreover, we have
$$\mathcal{S}=\cup_{\epsilon \in (0,1)} \mathcal{S}_{0,\epsilon}^{\epsilon,k}.$$
\end{prop}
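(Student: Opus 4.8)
The plan is to prove Proposition~\ref{equiv} by a contradiction-and-compactness argument, in the spirit of the proof of Proposition~\ref{getsplit} but using the stronger, more rigid hypothesis that $y_\infty$ is $(k,\delta,r)$-symmetric. First I would fix $\epsilon>0$ and $Y,A<\infty$ and suppose the statement fails: there is a sequence $\delta_i\searrow 0$, metric flows $\mathcal{X}_i$ as in Theorem~\ref{bamconvergence} with the stated Nash entropy bound, points $y_{\infty,i}\in(\mathcal{X}_i)_{<0}\cap P^*(x_{\infty,i};A,-A^2)$ which are $(k,\delta_i,r_i)$-symmetric, and closed pointed Ricci flows $(M_i,(g_{i,t}),(y_{0,i},0))$ which are $\delta_i$-close in $\mathbb{F}$-distance (after the appropriate time shift and rescaling) to $\mathcal{X}_i^{-t_{0,i},r_i^{-1}}$ and satisfy the Nash entropy matching condition, but such that neither $(i)$ nor $(ii)$ holds for any of them. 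After parabolically rescaling so that $r_i=1$, the definition of $(k,\delta_i,r_i)$-symmetric furnishes for each $i$ a model metric flow pair $(\mathcal{X}_i',(\mu_t'^{,i}))$ over $(-\infty,0]$, arising as an $\mathbb{F}$-limit of noncollapsed Ricci flows, satisfying $(b1)$ or $(b2)$ of Definition~\ref{symdef} together with conditions $(c)$ and $(d)$, and with $d_{\mathbb{F}}$ between $\mathcal{X}_i^{-t_{0,i},r_i^{-1}}$ (restricted to $[-\delta_i^{-1},0]$) and $\mathcal{X}_i'$ less than $\delta_i$, plus $|\mathcal{N}_{y_{\infty,i}}(1)-W_i|<\delta_i$ where $W_i=\mathcal{N}_{(\mu_t'^{,i})}(\tau)\in[-Y,0]$.

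Next I would pass to subsequences. By a diagonal argument over a compact exhaustion of time intervals, $d_{\mathbb{F}}(\mathcal{X}_i^{-t_{0,i},1},\mathcal{X}_i')\to 0$ means the flows $(M_i,(g_{i,t}),(\nu_{y_{0,i},0;t}))$ $\mathbb{F}$-converge to the same limit as the $\mathcal{X}_i'$ do (up to the time-shift/rescaling). After passing to a further subsequence, $W_i\to W_\infty\in[-Y,0]$ and the $\mathcal{X}_i'$ $\mathbb{F}$-converge to some metric flow pair $(\mathcal{X}_\infty',(\mu_t^\infty))$; in the $(b1)$ case Lemma~\ref{diagonal} (applied with the $\mathcal{X}_i'$ in place of $\mathcal{X}_i$, where we may assume after a further subsequence that the splitting dimension is a fixed $k$ or, in the $(b2)$ case, $k-2$ with $Rc\equiv 0$) shows $(\mathcal{X}_\infty',(\mu_t^\infty))$ is again a metric soliton splitting $\mathbb{R}^k$ and satisfying $(a)$--$(d)$ of Proposition~\ref{getsplit}; in the $(b2)$ case it shows $\mathcal{X}_\infty'\cong\mathcal{X}''\times\mathbb{R}^{k-2}$ for a static metric cone $\mathcal{X}''$ with $Rc\equiv 0$, i.e.\ $\mathcal{X}_\infty'$ is static and splits $\mathbb{R}^{k-2}$. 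Crucially, since each $(y_{0,i},0)$ is (after rescaling) close in $\mathbb{F}$-distance to $\mathcal{X}_i'$, which is close to $\mathcal{X}_\infty'$, the sequence $(M_i,(g_{i,t}),(\nu_{y_{0,i},0;t}))$ itself $\mathbb{F}$-converges to $(\mathcal{X}_\infty',(\mu_t^\infty))$, and by Nash entropy convergence (Theorem~15.45 of \cite{bamlergen3}) together with $|\mathcal{N}_{y_{0,i},0}(1)-\mathcal{N}_{y_{\infty,i}}(1)|<\delta_i$ and $|\mathcal{N}_{y_{\infty,i}}(1)-W_i|<\delta_i$ we get $\mathcal{N}_{y_{0,i},0}(1)\to W_\infty=\mathcal{N}_{(\mu_t^\infty)}(\tau)$.

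At this point the hypotheses of Proposition~\ref{getsplit} are met by $(\mathcal{X}_\infty',(\mu_t^\infty))$ together with the approximating flows $(M_i,(g_{i,t}),(\nu_{y_{0,i},0;t}))$: it is a metric soliton which is an $\mathbb{F}$-limit of the $M_i$, it splits $\mathbb{R}^k$ (resp.\ is static and splits $\mathbb{R}^{k-2}$), satisfies $(c)$, and has constant Nash entropy $W_\infty\ge -Y$. Hence for every fixed $\epsilon'>0$, Proposition~\ref{getsplit} gives that $(y_{0,i},0)$ is $(\epsilon',1)$-selfsimilar and $(k,\epsilon',1)$-split (resp.\ additionally $(\epsilon',1)$-static and $(k-2,\epsilon',1)$-split) for all $i$ large. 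Taking $\epsilon'=\epsilon$ contradicts the assumption that neither $(i)$ nor $(ii)$ held for infinitely many $i$. This establishes the main dichotomy; then $y_\infty$ being weakly $(k,\epsilon,1)$-symmetric is immediate from the definition (the flow $(M,(g_t))$ with base point $y_0$ witnesses it, after unwinding the rescaling back to $r$), so $\widehat{\mathcal{S}}_{r_1,r_2}^{\epsilon,k}\subseteq\mathcal{S}_{r_1,r_2}^{\delta(\epsilon,Y,A),k}$ by contraposition, and combining with Lemma~20.3 of \cite{bamlergen3} (i.e.\ the reverse inclusion recorded earlier) gives $\widehat{\mathcal{S}}_{0,\epsilon}^{\epsilon,k}$ and $\mathcal{S}_{0,\epsilon}^{\epsilon,k}$ cofinal in one another, whence $\mathcal{S}=\cup_{\epsilon\in(0,1)}\mathcal{S}_{0,\epsilon}^{\epsilon,k}$. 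The main obstacle I anticipate is bookkeeping: carefully tracking the parabolic rescalings $r_i\mapsto 1$ and time-shifts $t_{0,i}$, ensuring the domains $[-\delta_i^{-1},0]$ exhaust $(-\infty,0]$ so the $\mathbb{F}$-limits are genuinely over $(-\infty,0]$, verifying that the constant-Nash-entropy condition $(d)$ survives the second limit (which uses Proposition~7.1 and the monotonicity/convergence of Nash entropy rather than just pointwise limits), and—in the $(b2)$ case—keeping the static and $Rc\equiv 0$ conditions intact through Lemma~\ref{diagonal} so that the second alternative of that lemma applies.
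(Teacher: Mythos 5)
Your proposal is correct and follows essentially the same contradiction-and-compactness strategy as the paper: assume the dichotomy fails along $\delta_i\searrow 0$, extract the model flow pairs $(\mathcal{X}_i',(\mu_t'^{,i}))$ from the $(k,\delta_i,r_i)$-symmetric hypothesis, use Theorem~\ref{bamconvergence} plus a diagonal/triangle argument to $\mathbb{F}$-converge both the smooth flows $(M_i,\dots,(y_{0,i},0))$ and the $\mathcal{X}_i'$ to a common limit, invoke Lemma~\ref{diagonal} to show the limit inherits $(b)$--$(d)$, then apply Proposition~\ref{getsplit} to get a contradiction. The only stylistic difference is ordering: the paper first applies Theorem~\ref{bamconvergence} to the $M_i$ directly and then observes $\mathcal{X}_i'$ share that limit, whereas you first extract a limit of the $\mathcal{X}_i'$ and then transfer it to the $M_i$; and you are a bit more explicit than the paper about passing to a subsequence to fix case $(b1)$ versus $(b2)$ and the splitting dimension, and about tracking the corollaries. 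One small omission worth noting: the paper uses the membership $y_\infty\in P^*(x_\infty;A,-A^2)$ together with Bamler's Nash entropy oscillation bound to pin down $\mathcal{N}_{y_\infty}(1)\geq -Y'(Y,A)$ (and hence $\mathcal{N}_{y_{0,i},0}(1)\geq -2Y'$, which is what makes Theorem~\ref{bamconvergence} applicable), whereas you appeal only to the $W\in[-Y,0]$ clause from Definition~\ref{symdef}, which controls $\mathcal{N}$ at scale $r^2$ rather than scale $1$; you should record this step, but it does not change the overall argument.
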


\begin{proof}
The hypotheses ensure that $\mathcal{N}_{y_{\infty}}(1)\geq -Y'(Y,A)$. By time translation and parabolic rescaling, we can assume $r=1$ and $\mathfrak{t}(y_{\infty})=0$. Suppose by way of contradiction there is a sequence $\delta_i \searrow 0$ along with metric flows $\mathcal{X}^i$ each obtained as $\mathbb{F}$-limits of closed noncollapsed $n$-dimensional Ricci flows, $(k,\delta_i,1)$-symmetric points $y_{\infty,i}\in \mathcal{X}_0^i$ with $\mathcal{N}_{y_{\infty,i}}(1)\geq -Y'$, closed pointed Ricci flows $(M_i,(g_{i,t})_{t\in [-\delta_i^{-1},0]},(y_i,0))$ satifying $|\mathcal{N}_{y_i,0}(1)-\mathcal{N}_{y_{\infty,i}}(1)|<\delta_i$ and 
$$d_{\mathbb{F}}\left( (M_i,(g_{i,t})_{t\in [-\delta_i^{-1},0]},(\nu_{y_i,0;t})_{t\in [-\delta_i^{-1},0]}),(\mathcal{X}_i,(\nu_{y_{\infty,i};t})_{t\in [-\delta_i^{-1},0]}) \right) <\delta_i ,$$
such that $(i),(ii)$ both fail for $(y_i,0)$. Because $y_{\infty,i}$ are $(k,\delta_i,1)$-symmetric, we can find metric flow pairs $(\mathcal{X}_i',(\mu_t'^{,i})_{t\in [-\delta_i^{-1},0]})$ satisfying properties $(b)-(d)$ in Definition \ref{symdef}, along with $|\mathcal{N}_{\mu_t'^{,i}}(1)-\mathcal{N}_{y_{\infty,i}}(1)|<\delta_i$ and 
$$d_{\mathbb{F}}\left( (\mathcal{X}_i,(\nu_{y_{\infty},i};t)_{t\in [-\delta_i^{-1},0]}),(\mathcal{X}_i',(\mu_t'^{,i})_{t\in [-\delta_i^{-1},0]}) \right) <\delta_i.$$
Because $\mathcal{N}_{y_i,0}(1)\geq -Y'-\delta_i \geq -2Y'$, Theorem \ref{bamconvergence} lets us pass to a subsequence to obtain a future-continuous metric flow pair $(\mathcal{X}',(\mu_t')_{t\in (-\infty,0]})$ such that 
$$(M_i,(g_{i,t})_{t\in [-\delta_i^{-1},0]},(\nu_{y_i,0;t})_{t\in [-\delta_i^{-1},0]})\xrightarrow[i\to \infty]{\mathbb{F}} (\mathcal{X}',(\mu_t')_{t\in (-\infty,0]})$$
uniformly on compact time intervals. By construction we also have
$$(\mathcal{X}_i',(\mu_t'^{,i})_{t\in [-\delta_i^{-1},0]}) \xrightarrow[i\to \infty]{\mathbb{F}} (\mathcal{X}',(\mu_t')_{t\in (-\infty,0]})$$
on compact time intervals, so by Lemma \ref{diagonal}, $\mathcal{X}'$ also satisfies properties $(b)-(d)$ of Definition \ref{symdef}. By Proposition \ref{getsplit}, we conclude that $(y_i,0)$ satisfies one of $(i)$ or $(ii)$ when $i\in \mathbb{N}$ is sufficiently large, a contradiction.
\end{proof}

\section{Strong Almost-GRS Potentials}

In this section, we construct parabolic regularizations of potential functions associated to conjugate heat kernels based at almost-selfsimilar points of a Ricci flow. These functions still satisfy the almost-soliton identities, but also satisfy additional estimates which will be useful in Section 5.

\begin{defn} A strong $(\epsilon,r)$-soliton potential based at $(x_{0},t_{0})\in M\times I$
is a function $h\in C^{\infty}(M\times[t_{0}-\epsilon^{-1}r^{2},t_{0}-\epsilon r^{2}])$
such that if $W:=\mathcal{N}_{x_{0},t_{0}}(r^{2})$, then 
\begin{flalign*} &\hspace{5 mm}(i) \hspace{3 mm} \square\left(4\tau(h-W)\right)=-2n,& \end{flalign*}
\vspace{-6 mm}
\begin{flalign*} &\hspace{5 mm}(ii) \hspace{3 mm} r^{-2}\int_{t_{0}-\epsilon^{-1}r^{2}}^{t_{0}-\epsilon r^{2}}\int_{M}\left|\tau(R+|\nabla h|^{2})-(h-W)\right|d\nu_{t}dt\leq\epsilon,,& \end{flalign*}
\vspace{-4 mm}
\begin{flalign*} &\hspace{5 mm}(iii) \hspace{3 mm} \int_{t_{0}-\epsilon^{-1}r^2}^{t_{0}-\epsilon r^2}\int_{M}\tau \left| Rc+\nabla^{2}h-\frac{1}{2\tau}g\right|^{2}d\nu_{t}dt\leq\epsilon ,& \end{flalign*}
\vspace{-4 mm}
\begin{flalign*} &\hspace{5 mm}(iv) \hspace{3 mm} \sup_{t\in[t_{0}-\epsilon^{-1}r^2,t_{0}-\epsilon r^2]}\int_{M}\left|\tau(R+2\Delta h-|\nabla h|^{2})+h-n-W\right|d\nu_{t}\leq\epsilon ,& \end{flalign*}
\vspace{-4 mm}
\begin{flalign*} & \hspace{5 mm}(v) \hspace{3 mm} \int_{M}\left(h-\frac{n}{2}\right)d\nu_{t}=W \text{ for all } t\in [t_0-\epsilon^{-1}r^2,t_0-\epsilon r^2].& \end{flalign*}
\end{defn}

\vspace{4 mm}
The following proposition is an analogue of Bamler's construction (Theorem 12.1 of \cite{bamlergen3}) of strong almost-splitting maps which approximate weak almost-splitting maps. 

\begin{prop} \label{strongsoliton}
For any $\epsilon>0$, $Y<\infty$, the following holds whenever $\delta\leq\overline{\delta}(\epsilon,Y)$.
Suppose $(M^{n},(g_{t})_{t\in[-T,0]})$ is a closed Ricci flow with
$\mathcal{N}_{x_{0},t_{0}}(r^{2})\geq-Y$. Assume $(x_{0},t_{0})\in M\times I$
is $(\delta,r)$-selfsimilar, and set $q:=4\tau(f-W)$, where $d\nu=d\nu_{x_{0},t_{0}}=(4\pi\tau)^{-\frac{n}{2}}e^{-f}dg$
and $W:=\mathcal{N}_{x_{0},t_{0}}(r^{2})$. Then there exists a function $q'\in C^{\infty}(M\times[t_{0}-\epsilon^{-1}r^{2},t_{0}-\epsilon r^{2}])$ such that $f':=\frac{1}{4\tau}q'+W$ is a strong $(\epsilon,r)$-soliton potential based at $(x_0,t_0)$, and
\begin{equation} \label{close} \int_{t_{0}-\epsilon^{-1}r^{2}}^{t_{0}-\epsilon r^{2}}\int_{M}|\nabla(f-f')|^{2}d\nu_{t}dt + \sup_{t\in [t_0-\epsilon^{-1}r^2,t_0-\epsilon r^2]} \int_M (f-f')^2 d\nu_t \leq\epsilon .\end{equation}
\end{prop}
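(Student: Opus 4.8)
## Proof Proposal

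The plan is to obtain $q'$ by running the conjugate heat equation backwards to smooth out $q = 4\tau(f-W)$. Concretely, note that $\square q = \square(4\tau(f-W)) = -2n + 4\tau(\square f) - 8\tau\Delta f \cdot(\text{lower order})$... more precisely, using $\square f = -R + |\nabla f|^2 - \frac{n}{2\tau}$ for the conjugate heat kernel potential and $\partial_t \tau = -1$, one computes that $\square q$ differs from $-2n$ by a term controlled by the soliton error $\tau|Rc + \nabla^2 f - \frac{1}{2\tau}g|$ and related quantities (this is the well-known computation underlying Perelman's monotonicity; see also Theorem 12.1 of \cite{bamlergen3}). Write $\square q = -2n + E$, where $\int_{t_0-\delta^{-1}r^2}^{t_0-\delta r^2}\int_M |E| \, d\nu_t \, dt$ is small in terms of $\delta$ by Definition \ref{almost}(i) together with the integral estimates (\ref{spacetime}),(\ref{timeslice}). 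We then \emph{define} $q'$ to be the solution of $\square q' = -2n$ on $M\times[t_0-\epsilon^{-1}r^2, t_0-\epsilon r^2]$ with terminal data $q'(\cdot, t_0-\epsilon r^2) = q(\cdot, t_0-\epsilon r^2)$; equivalently $q' - q_{\mathrm{lin}}$ solves the heat equation where $q_{\mathrm{lin}}$ is any fixed particular solution of $\square(\cdot) = -2n$. Property (i) of a strong $(\epsilon,r)$-soliton potential then holds by construction. Actually, to get (v) exactly one should instead prescribe the terminal data so that $\int_M (q'/(4\tau) + W - n/2)\,d\nu_t = W$; since $\square$ preserves the relevant integral identity up to the controlled error, one adjusts $q'$ by an additive function of time (a spacetime constant at each slice won't disturb (i) since $\square(\text{const in space}) $ involves only $\partial_t$), or more simply normalizes at the terminal time and propagates — I would phrase this as: let $q'$ solve $\square q' = -2n$ with terminal value $q(\cdot, t_0-\epsilon r^2) - c$ for the unique constant $c$ making (v) hold at that time, and check (v) at all earlier times follows from $\frac{d}{dt}\int_M(\cdots)d\nu_t = 0$ up to error, then absorb the error.

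The core estimate is the closeness (\ref{close}), which follows from an energy/Duhamel argument. Set $w := q - q'$, so $\square w = E$ and $w(\cdot, t_0-\epsilon r^2) = $ (a small constant). I would compute, using that $\frac{d}{dt}\int_M w^2\, d\nu_t = -\int_M \square(w^2)\, d\nu_t = \int_M(2w\,\square w - 2|\nabla w|^2)\,d\nu_t = \int_M(2wE - 2|\nabla w|^2)\,d\nu_t$ (the sign conventions: along a conjugate heat flow $d\nu_t$, $\frac{d}{dt}\int \phi\, d\nu_t = \int \square \phi \, d\nu_t$ when $\square = \partial_t - \Delta$; here $t$ is decreasing so signs must be tracked carefully). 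Integrating in time from the terminal slice backwards, the $-2|\nabla w|^2$ term gives exactly $2\int\int |\nabla w|^2 d\nu_t dt \leq 2\int\int |w||E|\,d\nu_t\,dt + \text{(boundary)}$, and one controls $\int\int |w||E|$ by $\big(\sup_t\int w^2 d\nu_t\big)^{1/2}\big(\int\int E^2 (\ldots)\big)^{1/2}$ or, since $E^2$ may not be integrable, by splitting $|w||E| \leq \eta w^2 |E|_* + \eta^{-1}|E|$-type bounds using the pointwise crude bounds on $E$ from (\ref{timeslice}); alternatively use that $\int\int |E| \leq \Psi(\delta|\epsilon,Y)$ and a weighted $L^2$–$L^\infty$ interpolation with the exponential weights $e^{\alpha f}$ from the Proposition after (\ref{timeslice}) to close a Gronwall inequality $\sup_t\int w^2 d\nu_t \leq \Psi(\delta|\epsilon,Y) + C\int_t^{t_0-\epsilon r^2}\sup_s\int w^2 d\nu_s\, ds$. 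This yields both $\int\int|\nabla w|^2 d\nu_t dt \leq \Psi(\delta|\epsilon,Y)$ and $\sup_t\int w^2 d\nu_t \leq \Psi(\delta|\epsilon,Y)$; rewriting $w = 4\tau(f - f')$ and dividing by $4\tau \in [4\epsilon r^2, 4\epsilon^{-1}r^2]$ (a bounded range) gives (\ref{close}) after choosing $\delta$ small relative to $\epsilon, Y$.

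It remains to verify properties (ii), (iii), (iv) for $f'$. The strategy is to show each of the four soliton-defect integrands for $f'$ is close in $L^1(d\nu_t\,dt)$ or $L^1(d\nu_t)$ to the corresponding integrand for $f$, which is itself $\Psi(\delta)$-small by $(\delta,r)$-selfsimilarity (Definition \ref{almost}(i)) and the rough bounds (\ref{spacetime}),(\ref{timeslice}). For (iii), $|Rc + \nabla^2 f' - \frac{1}{2\tau}g| \leq |Rc + \nabla^2 f - \frac{1}{2\tau}g| + |\nabla^2(f-f')|$, so I need an $L^2(d\nu_t dt)$ bound on $\nabla^2(f-f') = \nabla^2 w /(4\tau)$. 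This is the analogue of the second-derivative control in Theorem 12.1 of \cite{bamlergen3}, and is obtained by a Bochner/energy estimate: differentiate the evolution of $\int|\nabla w|^2 d\nu_t$ to produce $\int\int|\nabla^2 w|^2 d\nu_t dt$ against $\int\int |\nabla w||\nabla E| d\nu_t dt + \text{boundary}$, where $\nabla E$ and the curvature terms arising from commuting $\nabla$ past $\square$ are controlled by (\ref{spacetime}). Properties (ii) and (iv) involve only $h, \nabla h$, and $R, \Delta h$: expand the integrands for $f'$, subtract those for $f$, and bound the differences by $\|\nabla w\|, \|w\|$ (already controlled) together with the ambient $L^2$ bounds on $R$, $|\nabla f|$, $|\nabla^2 f|$ from (\ref{spacetime}),(\ref{timeslice}) via Cauchy–Schwarz; for (ii) note $\tau(R + |\nabla h|^2) - (h-W) = \frac{1}{4\tau}\cdot\big(\text{something}\big)$ that was already $\Psi(\delta)$-small for $q$ by combining the first two displays of Definition \ref{almost}(i). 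I expect the main obstacle to be exactly the $\nabla^2 w$ estimate needed for (iii): one must run a weighted energy estimate for $\nabla w$, carefully handle the commutator terms $[\Delta,\nabla]w = Rc(\nabla w)$ and the reaction term from $\nabla\square w = \square\nabla w + \ldots$, and control $\nabla E$ — which involves third derivatives of $f$ and first derivatives of curvature — using only the integral bounds available in (\ref{spacetime}); this is precisely where the exponential weights $e^{\alpha f}$ and the parabolic-rescaling-invariant structure of the estimates in \cite{bamlergen3} do the essential work, and the bookkeeping mirrors (and can cite) the proof of Theorem 12.1 there.
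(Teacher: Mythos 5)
Your proposal shares the broad idea with the paper---regularize $q=4\tau(f-W)$ by solving $\square q'=-2n$ and then show $q'$ inherits the soliton identities---but it diverges in three places, two of which are genuine gaps.

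\textbf{The Cauchy problem is set up in the wrong direction.} You prescribe terminal data $q'(\cdot, t_0-\epsilon r^2)=q(\cdot,t_0-\epsilon r^2)$ and ask $q'$ to solve $\square q'=-2n$ backward on $[t_0-\epsilon^{-1}r^2,\,t_0-\epsilon r^2]$. Since $\square=\partial_t-\Delta_{g_t}$ is a forward heat operator, solving it with data at the \emph{later} endpoint is an ill-posed backward heat problem; no $q'$ will exist for generic data. The paper instead picks a good slice $t^\ast$ near $t_0-2\epsilon^{-1}r^2$ (by pigeonhole, so that the truncation errors are small there), prescribes initial data at $t^\ast$, and solves \emph{forward}. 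Relatedly, you skip the truncation $\widetilde q=\chi\circ q$: this is not cosmetic. The initial data must be bounded so the maximum principle yields a two-sided pointwise bound on $q'$, which is used both to close the energy estimate (to absorb the term $\int (\widetilde q-q')(\square\widetilde q+2n)\,d\nu_t$, which is only $L^1$-small, not $L^2$-small) and to control the boundary terms in Step 3.

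\textbf{Your plan for property (iii) will not close.} You propose to bound $\int\!\!\int|\nabla^2(f-f')|^2\,d\nu_t\,dt$ by an energy estimate for $\nabla w$, $w=q-q'$, and you correctly identify that this forces you to estimate $\nabla E$ where $E=\square q+2n=-4\bigl[\tau(R+2\Delta f-|\nabla f|^2)+f-n-W\bigr]$. But $\nabla E$ involves $\nabla R$ and $\nabla\Delta f$, and the only integral bounds available in the almost-selfsimilar regime, namely \eqref{spacetime} and \eqref{timeslice}, control $R$, $Rc$, $\nabla f$, $\nabla^2 f$---not their derivatives. There is no way, with the given inputs, to make $\nabla E$ small in any useful norm, so this route stalls precisely where you anticipate trouble. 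The paper circumvents the issue entirely by working with the parabolic Harnack quantity $w'$ for $q'$: because $\square q'=-2n$ exactly, one has the \emph{exact} (not approximate) identity
\begin{equation*}
\square\bigl(16\tau(w'-W)\bigr)=32\tau^2\Bigl|Rc+\tfrac{1}{4\tau}\nabla^2 q'-\tfrac{1}{2\tau}g\Bigr|^2\ge 0.
\end{equation*}
Integrating this against $d\nu_t$ with a temporal cutoff converts the $L^2$ soliton defect of $f'$ into boundary terms involving only $|w'-W|$ in $L^1$, which is controlled by the $H^1$-closeness $q'\approx q$ from Step 2 and the almost-selfsimilar identities for $f$. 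No derivative of $E$ ever appears, and no Bochner/commutator bookkeeping is needed. This nonnegativity trick is the essential idea your proposal is missing.

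A smaller issue: your sign for the evolution of weighted integrals contradicts the correct parenthetical you state later ($\frac{d}{dt}\int\phi\,d\nu_t=\int\square\phi\,d\nu_t$, no minus sign); this is fixable, but be careful, since the Gronwall direction depends on it.
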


\begin{proof}
Without loss of generality, we can assume $r=1$ and $t_{0}=0$. Bamler's on-diagonal upper bounds for the heat kernel (Theorem 7.1 in \cite{bamlergen1}) imply $f\geq-\Lambda(Y)$ on
$M\times[-\epsilon^{-1},-\epsilon]$ if $\delta \leq \overline{\delta}$. Fix $Z\in(\Lambda,\infty)$
to be determined, and let $\chi\in C^{\infty}(\mathbb{R})$ be such
that $\chi(s)=s$ for $s\in(-\infty,\frac{1}{2}Z]$, $|\chi'|\leq 2$,  $|\chi''|\leq 10Z^{-1}$, and $\chi(s)=Z$ for all $s\in[Z,\infty]$. Set $\widetilde{q}:=\chi\circ q$, which satisfies $\widetilde{q}\geq -4\tau(\Lambda+Y)$.

\noindent \textbf{Step 1: (Bound the truncation errors) }We first
apply Proposition 6.5 of \cite{bamlergen3} to obtain
\[
\nu_{t}(\{q\geq Z\})\leq e^{-\frac{Z}{8\tau}}\int_{\{q\ge Z\}}e^{\frac{q}{8\tau}}d\nu_{t}\leq C(Y)e^{-\frac{Z}{8\tau}}\int_{\{q\geq Z\}}e^{\frac{1}{2}f}d\nu_{t}\leq C(Y)e^{-\frac{Z}{8\tau}}
\]
for all $t\in [-\frac{1}{2}\delta^{-1},-\delta]$. Thus, for any
$p\in[1,\infty)$, 
\begin{align*}
\int_{\{q\geq Z\}}q^{p}d\nu_{t}= & \int_{\{q\geq Z\}}\left(p\int_{0}^{q(x)}r^{p-1}dr\right)d\nu_{t}(x)=p\int_{0}^{\infty}\int_{\{q\geq Z\}}1_{\{r\leq q(x)\}}r^{p-1}d\nu_{t}(x)dr\\
= & p\int_{Z}^{\infty}r^{p-1}\nu_{t}(\{q\geq r\})dr +pZ^p \nu_t(\{q\geq Z\}) \\
\leq&  C(Y,p)\int_{Z}^{\infty}r^{p-1}e^{-\frac{r}{8\tau}}dr +C(Y,p)Z^p e^{-\frac{Z}{8\tau}}
\end{align*}
In particular, we have 
\[
\int_{\{q\ge Z\}}q^{p}d\nu_{t}\leq C(Y,p,\epsilon)e^{-\frac{\epsilon Z}{10}}.
\]
for all $t\in [-10\epsilon^{-1},-\frac{1}{10}\epsilon]$. Using (\ref{spacetime}), we can estimate
\begin{align*}
\int_{-10\epsilon^{-1}}^{-\frac{1}{10}\epsilon}\int_{M}|\nabla(q-\widetilde{q})|^{2}d\nu_{t}dt\leq & 2\int_{-10\epsilon^{-1}}^{-\frac{1}{10}\epsilon}\int_{\{q\geq Z/2\}}|\nabla q|^{2}d\nu_{t}dt\\
&\leq \left( \int_{-10\epsilon^{-1}}^{-\frac{1}{10}\epsilon} \int_M |\nabla q|^4 d\nu_t dt \right)^{\frac{1}{2}} \left( \int_{-10\epsilon^{-1}}^{-\frac{1}{10}\epsilon} \nu_t (\{ q \geq Z/2 \}) dt \right)^{\frac{1}{2}}\\ &\leq C(Y,\epsilon)e^{-\frac{\epsilon Z}{20}}.
\end{align*}
Next, we estimate
\[
\int_{M}(q-\widetilde{q})^{2}d\nu_{t}\leq\int_{\{q\geq Z/2\}}2q^{2}d\nu_{t}\leq C(Y,\epsilon)e^{-\frac{\epsilon Z}{10}}
\]
for any $t\in [-10\epsilon^{-1},-\frac{1}{10}\epsilon]$,
and apply (\ref{spacetime}) to obtain
\begin{align*}
\int_{-10\epsilon^{-1}}^{-\frac{1}{10}\epsilon}\int_{M}|\Delta(q-\widetilde{q})|d\nu_{t}dt & \leq  \int_{-10\epsilon^{-1}}^{-\frac{1}{10}\epsilon}\int_{M}\left|1-(\chi'\circ q)\right|\cdot|\Delta q|d\nu_{t}dt+\int_{-10\epsilon^{-1}}^{-\frac{1}{10}\epsilon}\int_{M}|\chi''\circ q|\cdot|\nabla q|^{2}d\nu_{t}dt\\
 &\leq \left( \int_{-10\epsilon^{-1}}^{-\frac{1}{10}\epsilon} \int_M \left( 4|\Delta q|^2 +|\nabla q|^4 \right) d\nu_t dt \right)^{\frac{1}{2}} \left( \int_{-10\epsilon^{-1}}^{-\frac{1}{10}\epsilon} \nu_t (\{ q \geq Z/2 \}) dt \right)^{\frac{1}{2}} \\
 &\leq C(Y,\epsilon)e^{-\frac{\epsilon Z}{20}}.
\end{align*}
\textbf{Step 2: (Estimate the error by parabolic regularization) } By Step 1, we can choose
$t^{\ast}\in[-2\epsilon^{-1},-2\epsilon^{-1}+1]$ such that \[
\int_{M}|\nabla(q-\widetilde{q})|^{2}d\nu_{t^{\ast}}+\int_{M}|q-\widetilde{q}|^{2}d\nu_{t^{\ast}}+\int_{M}|\Delta(q-\widetilde{q})|d\nu_{t^{\ast}}\le C(Y,\epsilon)e^{-\frac{\epsilon Z}{20}}.
\]

\noindent Let $q'\in C^{\infty}(M\times[t^{\ast},0])$ solve $\square q'=-2n$,
with $q'(\cdot,t^{\ast}):=\widetilde{q}$. Because $-\Lambda \leq\widetilde{q}\leq Z$,
the maximum principle gives $-8\epsilon^{-1}(\Lambda+Y)-4n\epsilon^{-1}\leq q'\leq Z$ on
$M\times[t^{\ast},0]$, so if $Z\geq 8\epsilon^{-1}(\Lambda+Y)$, then (\ref{timeslice}) and the almost-selfsimilar inequalities (Definition \ref{almost}$(i)$) imply
\begin{align*}
\frac{d}{dt}\int_{M}(\widetilde{q}-q')^{2}d\nu_{t}= & 2\int_{M}(\widetilde{q}-q')(\square\widetilde{q}+2n)d\nu_{t}-2\int_{M}|\nabla(\widetilde{q}-q')|^{2}d\nu_{t}\\
\leq & C(\epsilon,Y)Z\int_{M}\left( |\square q+2n|+|1-\chi'\circ q|+(\chi''\circ q)|\nabla q|^2  \right)d\nu_{t}\\& -2\int_{M}|\nabla(\widetilde{q}-q')|^{2}d\nu_{t} \\
\leq & C(\epsilon,Y)Z\nu_t(\{ q\geq Z/2 \})+\left( \nu_t(\{q \geq Z/2\}) \right)^{\frac{1}{2}} \left( \int_M |\nabla q|^4 d\nu_t \right)^{\frac{1}{2}} \\
& +\Psi(\delta|\epsilon,Y,Z)-2\int_M |\nabla (\widetilde{q}-q')|^2 d\nu_t
\end{align*}
so we may integrate in time, using Step 1, H\"older's inequality, and (\ref{spacetime}) to obtain
\begin{align*}
\sup_{t\in[t^{\ast},-\frac{1}{10}\epsilon]}\int_{M}(\widetilde{q}-q')^{2}d\nu_{t}+\int_{t^{\ast}}^{-\frac{1}{10}\epsilon}\int_{M}|\nabla(\widetilde{q}-q')|^{2}d\nu_{t}dt\leq & \Psi(\delta|Y,\epsilon,Z) + C(Y,\epsilon)Z e^{-\frac{\epsilon Z}{20}}.
\end{align*}
Combining this with the estimates of Step 1 gives
$$\sup_{t\in [t^{\ast},-\frac{1}{10}\epsilon]}\int_M (q-q')^2 d\nu_t +\int_{t^\ast}^{-\frac{1}{10}\epsilon} \int_M |\nabla (q-q')|^2 d\nu_t dt \leq \Psi(\delta|Y,\epsilon,Z)+C(Y,\epsilon)Z e^{-\frac{\epsilon Z}{20}},$$
hence (\ref{close}) holds if we choose $Z\geq \underline{Z}(\epsilon,Y)$ and then $\delta \leq \overline{\delta}(\epsilon,Y,Z)$.\\

\noindent \textbf{Step 3: (Show $f'$ is an almost-soliton potential
function) } We now consider a quantity analogous to Perelman's differential Harnack quantity:
$$w':=\tau \left( R+\frac{1}{2\tau}\Delta q' -\frac{1}{16\tau^2}|\nabla q'|^2 \right) +\frac{1}{4\tau}q'-n+W,$$
so that
\begin{align} \label{harnack} \nonumber \square \left( 16\tau (w'-W) \right) & = \square \left( 16\tau^2 +8\tau \Delta q' -|\nabla q'|^2 +4q' -16\tau n \right) \\ & = 32\tau^2 \left| Rc +\frac{1}{4\tau}\nabla^2 q' -\frac{1}{2\tau}g \right|^2.
\end{align}
We can write
\begin{align*} w'-W &= (w'-w)+(w-W)\\
&= \frac{1}{2}\Delta (q'-q)-\frac{1}{16\tau}\left( |\nabla q'|^2 -|\nabla q|^2 \right) +\frac{1}{4\tau}(q'-q)+(w-W).
\end{align*}
By Step 1, we can estimate
\begin{align*} \int_M \left| |\nabla q'|^2 -|\nabla q|^2 \right| d\nu_{t^{\ast}} &\leq C \int_{\{q\geq Z/2\}} |\nabla q|^2 d\nu_{t^{\ast}} \leq C(Y,\epsilon)e^{-\frac{\epsilon Z}{20}},
\end{align*}
and so (also using the almost-selfsimilar identities)
\begin{equation} \label{initial} \int_M 16\tau|w'-W|d\nu_{t^{\ast}} \leq C(Y,\epsilon)e^{-\frac{\epsilon Z}{20}} +C(\epsilon)\delta.\end{equation}
Using (\ref{spacetime}) and Step 2, we have 
\begin{align} \label{squared}
\int_{t^{\ast}}^{-\frac{\epsilon}{2}}\int_{M}\left||\nabla q'|^{2}-|\nabla q|^{2}\right|d\nu_{t}dt\leq & \int_{t^{\ast}}^{-\frac{\epsilon}{2}}\int_{M}\left|\langle\nabla(q'-q),\nabla(q'-q)+2\nabla q\rangle\right|d\nu_{t}dt\\ \nonumber
\leq & \int_{t^{\ast}}^{-\frac{\epsilon}{2}}\int_{M}\left|\nabla(q'-q)\right|^{2}d\nu_{t}dt+C(Y,\epsilon)\left(\int_{t^{\ast}}^{-\frac{\epsilon}{2}}\int_{M}|\nabla(q'-q)|^{2}d\nu_{t}dt\right)^{\frac{1}{2}}\\ \nonumber
\leq & \Psi(\delta|\epsilon,Y,Z)+C(Y,\epsilon)Z e^{-\frac{\epsilon Z}{40}},
\end{align}
\begin{align} \label{laplace}
\int_{t^{\ast}}^{-\frac{\epsilon}{2}} \left| \int_M \Delta (q'-q)d\nu_t \right| dt &= \int_{t^{\ast}}^{-\frac{\epsilon}{2}} \left| \int_M \langle \nabla f, \nabla (q'-q) \rangle d\nu_t \right| dt\\ \nonumber &\leq \left( \int_{t^{\ast}}^{-\frac{\epsilon}{2}} \int_M |\nabla f|^2 d\nu_t dt \right)^{\frac{1}{2}} \left( \int_{t^{\ast}}^{-\frac{\epsilon}{2}} \int_M |\nabla (q'-q)|^2 d\nu_t dt \right)^{\frac{1}{2}}\\ \nonumber
&\leq \Psi(\delta|\epsilon,Y,Z)+C(Y,\epsilon)Z e^{-\frac{\epsilon Z}{40}}.
\end{align}
Choose a cutoff function $\zeta\in C^{\infty}([t^{\ast},-\frac{1}{2}\epsilon])$
satisfying $\zeta(t)|[t^{\ast},-\epsilon]\equiv 1$, $|\zeta'|\leq 4\epsilon^{-1}$, and $\zeta(-\frac{\epsilon}{2})=0$. Then (\ref{harnack}) implies
\begin{equation}\label{evolution} \int_{t^{\ast}}^{-\frac{\epsilon}{2}} \zeta \frac{d}{dt} \int_M \tau(w'-W)d\nu_t dt = 2\int_{t^{\ast}}^{-\frac{\epsilon}{2}} \int_M \zeta \tau^2 \left| Rc+\frac{1}{4\tau}\nabla^2 q' -\frac{1}{2\tau}g \right|^2 d\nu_t dt. \end{equation}
On the other hand, integration by parts gives
\begin{equation} \label{IBP} \int_{t^{\ast}}^{-\frac{\epsilon}{2}} \zeta \frac{d}{dt} \int_M \tau(w'-W)d\nu_t dt = -\int_{t^{\ast}}^{-\frac{\epsilon}{2}} \zeta' \int_M \tau (w'-W)d\nu_t dt -\int_M \tau(w'-W)d\nu_{t^{\ast}}.\end{equation}
By combining (\ref{evolution}),(\ref{IBP}) with estimates (\ref{initial}),(\ref{squared}),(\ref{laplace}), and Steps 1,2, we obtain
\begin{equation} \label{goodsoliton} \int_{t^{\ast}}^{-\epsilon} \int_M \tau^2 \left| Rc+\frac{1}{4\tau}\nabla^2 q'-\frac{1}{2\tau}g \right|^2 d\nu_t dt \leq \Psi(\delta|\epsilon,Y,Z) +C(Y,\epsilon)Z e^{-\frac{\epsilon Z}{40}}.\end{equation}
Thus, property $(iii)$ of strong almost-soliton potential functions holds if we choose $Z\geq \underline{Z}(\epsilon,Y)$ and $\delta \leq \overline{\delta}(\epsilon,Y,Z)$.
In the sense of distributions,
\[
\square |\tau(w'-W)| \leq 2\tau^2 \left| Rc+\frac{1}{4\tau}\nabla^2 q'-\frac{1}{2\tau}g \right|^2,
\]
so we can use (\ref{goodsoliton}) to estimate
\begin{align*} \int_M |\tau(w'-W)| d\nu_t - \int_M |\tau(w'-W)|d\nu_{t^{\ast}} &= \int_{t^{\ast}}^t \int_M \square |\tau(w'-W)|d\nu_s ds \\ &\leq \Psi(\delta|Y,\epsilon,Z)+C(Y,\epsilon)Z e^{-\frac{\epsilon Z}{40}} \end{align*}
for all $t\in [t^{\ast},-\epsilon]$. Combining this with (\ref{initial}) gives property $(iv)$ of strong almost-soliton functions if we choose $Z\geq \underline{Z}(\epsilon,Y)$ and $\delta \leq \overline{\delta}(\epsilon,Y,Z)$, while $(ii)$
follows by combining $(iii),(iv)$. To verify $(v)$, we note that
$$\frac{d}{dt}\left( \tau \int_M  \left( f'-\frac{n}{2}\right) d\nu_t -\tau W\right)= \frac{1}{4}\int_M (\square q +2n)d\nu_t =0,$$
and moreover 
$$\left| \int_M \left( f'-\frac{n}{2} \right) d\nu_{-1} - W \right| \leq \int_M |q'-q|d\nu_{-1} \leq \Psi(\delta |Y,\epsilon)+C(Y,\epsilon)Z e^{-\frac{\epsilon Z}{40}},$$
so we can add a small constant to $q$ to obtain $f'$ satisfying $(v)$, without affecting properties $(i)-(iv)$ or (\ref{close})
\end{proof}

In the case where $(x_0,t_0)$ is also almost-static, the scalar and Ricci curvature terms are small, and $4\tau (h-W)$ is a regularization of Bamler's almost-radial function (see Proposition 13.1 of \cite{bamlergen3} when $k=0$). 

\begin{defn} A strong $(\epsilon,r)$-radial function based at $(x_{0},t_{0})\in M\times I$ is a function $h\in C^{\infty}(M\times[t_{0}-\epsilon^{-1}r^{2},t_{0}-\epsilon r^{2}])$
such that if $W:=\mathcal{N}_{x_{0},t_{0}}(r^{2})$, then 
\begin{flalign*} & \hspace{5 mm} (i) \hspace{3 mm} \square q=-2n, &
\end{flalign*}
\begin{flalign*} & \hspace{5 mm} (ii) \hspace{3 mm} r^{-4}\int_{t_{0}-\epsilon^{-1}r^{2}}^{t_{0}-\epsilon r^{2}}\int_{M}\left| |\nabla q|^{2}-4q \right|d\nu_{t}dt\leq\epsilon, &
\end{flalign*}
\begin{flalign*} & \hspace{5 mm} (iii) \hspace{3 mm} r^{-2}\int_{t_{0}-\epsilon^{-1}r^2}^{t_{0}-\epsilon r^2}\int_{M}|\nabla^{2}q-2g|^{2}d\nu_{t}dt\leq\epsilon, &
\end{flalign*}
\begin{flalign*} & \hspace{5 mm} (iv) \hspace{3 mm} \int_{M} q d\nu_{t}= 2n\tau \text{ for all } t\in [t_0-\epsilon^{-1}r^2,t_0-\epsilon r^2]. &
\end{flalign*}
\end{defn}

Given these definitions, we can rephrase Proposition \ref{strongsoliton}, and give a criterion for the existence of strong $(\epsilon,r)$-radial functions. Moreover, we will establish slightly improved estimates, which will be useful for the proof of Theorem \ref{mainthm3}.

\begin{prop} \label{stronger} For any $\epsilon>0$, $Y<\infty$ and $p\in [1,\infty)$, the following holds if $\delta \leq \overline{\delta}(\epsilon,Y,p)$ and $\alpha \leq \overline{\alpha}(\epsilon,Y)$. Suppose $(M^n,(g_t)_{t\in I},(x_0,t_0))$ is a closed, pointed Ricci flow satisfying $\mathcal{N}_{x_0,t_0}(r^2)\geq -Y$. Assume $(x_0,t_0)$ is $(\delta,r)$-selfsimilar and $h$ is a strong $(\delta,r)$-soliton potential. Then
\begin{equation} \label{gradient} \sup_{t\in [t_0-\epsilon^{-1}r^2,t_0-\epsilon r^2]} \int_M |\nabla h|^p d\nu_t \leq C(Y,\epsilon,p) ,\end{equation}
\begin{equation} \label{improved} \int_{t_0-\epsilon^{-1}r^2}^{t_0-\epsilon r^2} \int_M \left(\tau \left| Rc+\nabla^2 h-\frac{1}{2\tau}g \right|^2 +r^{-2}\left| \tau (R+|\nabla h|^2) -(h-W) \right| \right)e^{\alpha f} d\nu_t dt \leq \epsilon .\end{equation}
If in addition $(x_0,t_0)$ is $(\delta,r)$-static, then $q:=4\tau (h-W)$ is a strong $(\epsilon,r)$-radial function satisfying
$$\int_{t_0-\epsilon^{-1}r^2}^{t_0-\epsilon r^2} \int_M \left( r^{-2}|\nabla^2 q-2g|^2 +r^{-4}\left| |\nabla q|^2 -4q \right| \right)e^{\alpha f} d\nu_t dt \leq \epsilon.$$
\end{prop}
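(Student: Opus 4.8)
We reduce to $r=1$, $t_0=0$ by parabolic rescaling and time translation, and set $q:=4\tau(h-W)$, so property $(i)$ of a strong soliton potential reads $\square q=-2n$. Enlarging the interval slightly (replacing $\epsilon$ by $\epsilon/2$), the bounds (\ref{spacetime}), (\ref{timeslice}) and the almost-selfsimilar inequalities of Definition \ref{almost} hold with constants $C(Y,\epsilon)$, and $h$ is defined there since $\delta<\epsilon/2$. The engine for deducing the \emph{weighted} estimates (\ref{improved}) and the weighted radial estimate from the \emph{unweighted} smallness built into the definitions is H\"older interpolation: for $X\ge 0$,
\[
\int X e^{\alpha f}\,d\nu\le\Big(\int X\,d\nu\Big)^{1/2}\Big(\int X e^{2\alpha f}\,d\nu\Big)^{1/2},
\]
where the first factor is $\le\delta^{1/2}$ times a constant by the soliton/radial axioms, so the real task is to bound the second, \emph{non-small}, factor by $C(Y,\epsilon)$; this is exactly where (\ref{spacetime}), (\ref{timeslice}) and (\ref{gradient}) are used.

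The first and main step is the gradient estimate (\ref{gradient}) together with the companion moment bound $\sup_t\int_M|h-W|^p e^{2\alpha f}\,d\nu_t\le C(Y,\epsilon,p)$. The Bochner formula and $\square q=-2n$ give the exact identity $\square|\nabla q|^2=-2|\nabla^2 q|^2\le 0$, so $|\nabla q|^2$ is a nonnegative heat subsolution; property $(ii)$ gives $\int_M\tau|\nabla h|^2\,d\nu_t\le\int_M\big((h-W)+\tau|R|\big)\,d\nu_t+\int_M|P|\,d\nu_t$ with $P:=\tau(R+|\nabla h|^2)-(h-W)$, and hence, using property $(v)$ ($\int_M(h-W)\,d\nu_t=\tfrac n2$), (\ref{timeslice}), and property $(ii)$ again, a spacetime bound $\int\int|\nabla q|^2\,d\nu\,dt\le C(Y,\epsilon)$; feeding this into Bamler's local smoothing/mean-value estimates for heat subsolutions (using heat kernel lower bounds on noncollapsed scales to pass between $d\nu$ and the background measure) upgrades it to the required $L^p$-in-$\nu_t$ control of $|\nabla q|^2=16\tau^2|\nabla h|^2$. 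For $h$ itself we exploit that $u:=\tau(h-W)+\tfrac n2 t$ is caloric with $\int_M u\,d\nu_t\equiv 0$ (this is property $(i)$); combining the maximum principle with Bamler's heat kernel upper bounds (which pin down $h-W$ from below near an initial time) and the soliton identities $(ii)$, $(iv)$ to bound $u$ from above yields the Gaussian-type tail $\nu_t(\{h-W\ge Z\})\le C(Y,\epsilon)e^{-c(Y,\epsilon)Z}$, hence the moment bounds for $\alpha\le\overline\alpha(\epsilon,Y)$. \textbf{I expect this step to be the principal obstacle}: unlike the genuine conjugate heat kernel potential $f$, a general strong soliton potential does not directly inherit concentration from the conjugate heat equation, so these pointwise/$L^p$ bounds must be extracted from the parabolic equation $\square q=-2n$ and the integral identities alone.

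Granting this, the rephrasing of Proposition \ref{strongsoliton} is immediate. For (\ref{improved}), the unweighted smallness of $\tau|Rc+\nabla^2 h-\tfrac1{2\tau}g|^2$ and of $|\tau(R+|\nabla h|^2)-(h-W)|$ is $(iii)$, $(ii)$; for the weighted factor, expanding squares reduces matters to bounding $\int\int\tau|\nabla^2 h|^2 e^{2\alpha f}\,d\nu$ (the $\tau|Rc|^2$ and $\tau^{-1}$ terms are handled by (\ref{spacetime}), and $|h-W|$, $\tau|\nabla h|^2$ by Step 1 and (\ref{timeslice})). This Hessian bound comes from integrating $2\tau|\nabla^2 q|^2=-\tau\,\square|\nabla q|^2$ against $\tfrac1\tau e^{2\alpha f}\,d\nu_t$ over the interval: the boundary term in time is controlled by Step 1 and (\ref{timeslice}), while the commutator terms involving $\square(\tfrac1\tau e^{2\alpha f})$ and $\langle\nabla|\nabla q|^2,\nabla(\tfrac1\tau e^{2\alpha f})\rangle$ are absorbed using (\ref{spacetime}), (\ref{timeslice}) and Step 1, which is where $\alpha\le\overline\alpha(\epsilon,Y)$ is needed to keep the weight-derivative terms under control. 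Then (\ref{improved}) follows from the interpolation inequality.

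Finally, suppose $(x_0,t_0)$ is also $(\delta,r)$-static, so $\int\int\tau|Rc|^2\,d\nu$ and $\sup_t\int_M R\,d\nu_t$ are $\le\delta$ and $R\ge-\delta$. That $q=4\tau(h-W)$ is a strong $(\epsilon,r)$-radial function is a short computation: $(i)$ gives $\square q=-2n$; $(v)$ gives $\int_M q\,d\nu_t=2n\tau$; from $\nabla^2 q-2g=4\tau\big(Rc+\nabla^2 h-\tfrac1{2\tau}g\big)-4\tau Rc$ one gets $\int\int|\nabla^2 q-2g|^2\,d\nu\lesssim\int\int\tau\big(\tau|Rc+\nabla^2 h-\tfrac1{2\tau}g|^2+\tau|Rc|^2\big)\,d\nu\lesssim\delta$; and from $|\nabla q|^2-4q=16\tau(P-\tau R)$ together with $R\ge-\delta$, $\sup_t\int_M R\,d\nu_t\le\delta$ and property $(ii)$, one gets $\int\int\big||\nabla q|^2-4q\big|\,d\nu\lesssim\delta$. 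The weighted radial estimate then follows from the same interpolation, using the weighted $L^2$ bounds already in hand (the Hessian bound $\int\int\tau|\nabla^2 h|^2 e^{2\alpha f}\,d\nu\le C$ from Step 2, the bound $\int\int\tau|Rc|^2 e^{2\alpha f}\,d\nu\le C$ from (\ref{spacetime}), and the control of $R$, $|\nabla h|$, $h-W$ from Step 1 and (\ref{timeslice})), after choosing $\delta\le\overline\delta(\epsilon,Y,p)$ sufficiently small.
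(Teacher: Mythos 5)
Your overall architecture matches the paper (reduce to $r=1$, $t_0=0$; establish the gradient bound first; then obtain the weighted estimates by Cauchy/H\"older interpolation between the unweighted smallness from the definition and an $e^{\alpha f}$-weighted \emph{boundedness}; finally add the almost-static inequalities), but the mechanism you propose for the central gradient estimate (\ref{gradient}) has a genuine gap, and it is precisely the step you yourself flag as the obstacle.

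Your Step 1 tries to get $L^p(\nu_t)$ control of $|\nabla q|$ via ``Bamler's local smoothing/mean-value estimates for heat subsolutions (using heat kernel lower bounds on noncollapsed scales to pass between $d\nu$ and the background measure)'' and, separately, to obtain a Gaussian tail $\nu_t(\{h-W\ge Z\})\le Ce^{-cZ}$ from the maximum principle plus ``Bamler's heat kernel upper bounds which pin down $h-W$ from below near an initial time.'' Neither of these works as stated. Mean-value inequalities for heat subsolutions give \emph{local} pointwise bounds whose constants depend on local geometry, and cannot be summed against $d\nu_t$ to produce a geometry-independent global $L^p(\nu_t)$ bound. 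And the definition of a strong $(\delta,r)$-soliton potential provides no initial-time data on $h$: unlike the construction in Proposition \ref{strongsoliton} (where $q'(\cdot,t^*)=\widetilde q$ is a truncation of the true potential, hence bounded), an abstract strong soliton potential carries only integral information $(i)$--$(v)$, so nothing ``pins down $h-W$ from below.'' The paper's actual route is sharper and circumvents all this: from $(ii)$, $(v)$ and the identity $|\nabla q|^2 = 16\tau(\tau(R+|\nabla h|^2)-(h-W))-16\tau^2R+4q$ one gets the spacetime $L^2$ bound $\int_{-T}^{-\epsilon}\int_M|\nabla q|^2\,d\nu\,dt\le 10nT^2$ on a \emph{long} interval $T=2p\epsilon^{-1}$; this pins a single slice $\widehat t\in[T-1,T]$ with $\int_M|\nabla q|^2\,d\nu_{\widehat t}\le 10nT^2$; and because $\square|\nabla q|^2\le 0$, Bamler's \emph{hypercontractivity} theorem (Theorem 12.1 of \cite{bamlergen1}) transports $L^2$ at the early time $\widehat t$ to $L^p$ at the later times of interest --- the choice $T\sim p\epsilon^{-1}$ being precisely what produces the integrability gain. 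Hypercontractivity is the right global tool here and there is no substitute of the kind you propose.

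Your Step 2 for the weighted Hessian bound (integrating $\square|\nabla q|^2=-2|\nabla^2 q|^2$ against $e^{\alpha f}\,d\nu_t$ and expanding the square) is a genuinely different computation from the paper's, which instead differentiates $\int_M\tau(w'-W)e^{\alpha f}\,d\nu_t$ for the differential Harnack quantity $w'$ and exploits the perfect-square evolution $\square(16\tau(w'-W))=32\tau^2|Rc+\tfrac1{4\tau}\nabla^2q-\tfrac1{2\tau}g|^2$, then bounds the error terms using the $L^2$-Poincar\'e inequality and (\ref{spacetime}). Your route is plausible in principle (once one expands $|4\tau Rc+\nabla^2q-2g|^2$ and handles cross terms with Cauchy--Schwarz and (\ref{spacetime})), but the Harnack-quantity route is cleaner because the perfect square appears directly and one avoids estimating $\int\int\langle\nabla^2 q,Rc\rangle e^{\alpha f}\,d\nu\,dt$ separately. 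Your handling of the static case is consistent with the paper. The proof does not go through as written without replacing your Step 1 by the hypercontractivity argument.
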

\begin{proof} 
By time translation and parabolic rescaling, we can assume $r=1$ and $t_0=0$. 
Fixing $T\in [\epsilon^{-1},\delta^{-1}]$, we use properties $(ii),(v)$ of strong almost-soliton potentials to get
\begin{align*} \int_{-T}^{-\epsilon} \int_M |\nabla q|^2 d\nu_t dt &\leq \int_{-T}^{-\epsilon} \int_M 16\tau \left( \tau(R+|\nabla h|^2) -(h-W) \right) d\nu_t dt + 16T^3 \delta^{-1} +\int_{-T}^{-\epsilon}\int_M 4q d\nu_t dt \\
&\leq 16T\delta + 16T^3 \delta + 8nT^2 \leq 10n T^2
\end{align*}
assuming $\epsilon \leq \overline{\epsilon}$. If we choose $T = 2p\epsilon^{-1}$, then we can therefore find $\widehat{t}\in [2p\epsilon^{-1}-1,2p\epsilon^{-1}]$ such that 
$$\int_M |\nabla q|^2 d\nu_{\widehat{t}} \leq 10nT^2,$$
so the hypercontractivity of the heat kernel (Theorem 12.1 in \cite{bamlergen1}) gives
$$\sup_{t\in [-2\epsilon,-\frac{\epsilon}{2}]} \int_M |\nabla q|^p d\nu_t \leq CT^2 = C(p,\epsilon).$$
By Cauchy's inequality, (\ref{improved}) will follow from
$$\int_{-\epsilon^{-1}}^{-\epsilon} \int_M \left( \tau \left| Rc+\nabla^2 h -\frac{1}{2\tau}g\right|^2 +|\tau(R+|\nabla h|^2)-(h-W)| \right) e^{\alpha f}d\nu_t dt\leq C(Y,\epsilon)$$
if $\alpha \leq \overline{\alpha}(\epsilon,Y)$. Fix a cutoff function $\zeta \in C^{\infty}([-2\epsilon^{-1},-\frac{1}{2}\epsilon])$ such that $\zeta(-2\epsilon^{-1})=\zeta(-\frac{1}{2}\epsilon)=0$, $\zeta|[-\epsilon^{-1},-\epsilon]\equiv 1$, and $|\zeta'|\leq 4\epsilon^{-1}$. We compute (recalling the definition of $w'$ from the proof of Proposition \ref{strongsoliton})
\begin{align*} \frac{d}{dt}\int_M \tau (w'-W)e^{\alpha f} & = \int_M \square \left( \tau (w'-W) \right) e^{\alpha f}d\nu_t -\tau \int_M (w'-W) \square^{\ast} \left( (4\pi \tau )^{-\frac{n}{2}} e^{-(1-\alpha)f} \right) dg_t \\ &= 
2\tau^2 \int_M \left| Rc+\nabla^2 h -\frac{1}{2\tau}g \right|^2 e^{\alpha f} d\nu_t \\&\hspace{6 mm}+\alpha \tau^2 \int_M (w'-W)\left( R+(1-\alpha)|\nabla f|^2 -\frac{n}{2\tau} \right)e^{\alpha f}d\nu_t.
\end{align*}
Multiplying both sides by $\zeta$ and integrating, then rearranging gives (assuming $\alpha \leq \overline{\alpha}(Y,\epsilon)$)
\begin{align*} \int_{-\epsilon^{-1}}^{-\epsilon}\int_M \tau^2 \left| Rc+\nabla^2 h-\frac{1}{2\tau}g \right|^2 e^{\alpha f}d\nu_t
\leq & C(Y,\epsilon)\left( \int_{-2\epsilon^{-1}}^{-\frac{1}{2}\epsilon} \int_M |w'-W|^2 d\nu_t dt \right)^\frac{1}{2} \\ &\hspace{-30 mm} +C(Y,\epsilon)\left( \int_{-2\epsilon^{-1}}^{-\frac{1}{2}\epsilon} \int_M |w'-W|^2 d\nu_t dt \right)^{\frac{1}{2}}\left( \int_{-2\epsilon^{-1}}^{-\frac{1}{2}\epsilon} \int_M (R^2 +|\nabla f|^4 +1)e^{2 \alpha f}d\nu_t dt \right)^{\frac{1}{2}}.
\end{align*}
The $L^2$ Poincare inequality and property $(iv)$ of Definition 4.3 give
\begin{align*} \int_{-2\epsilon^{-1}}^{-\frac{1}{2}\epsilon} \int_M |w'-W|^2 d\nu_t dt &\leq C(Y,\epsilon) \int_{-2\epsilon^{-1}}^{-\frac{1}{2}\epsilon} \int_M \left(R^2 +|\nabla h|^4 +2|\nabla ^2 h|^2 +h^2 +1 \right) d\nu_t dt \\ &
\leq C(Y,\epsilon)\int_{-2\epsilon^{-1}}^{-\frac{1}{2}\epsilon} \int_M \left( |Rc|^2 + |\nabla h|^4 +1 \right)  d\nu_t dt.
\end{align*}
The $L^4$ estimate for $|\nabla h|$ is a consequence of (\ref{gradient}), while the Ricci curvature is bounded using (\ref{spacetime}).

Now suppose $(x_0,0)$ is also $(\delta,1)$-static. Then $q$ clearly satisfies properties $(i),(iv)$, while $(iii),(iv)$ follow from combining properties $(iii),(ii)$, respectively, of strong almost-soliton potentials with the almost-static inequalities. The remaining inequality follows from the improved estimate for strong almost-soliton potentials and the estimate
$$\int_{-2\epsilon^{-1}}^{-\frac{1}{2}\epsilon} \int_M |Rc|^2 e^{\alpha f}d\nu_t +\sup_{t\in [-2\epsilon^{-1},-\frac{1}{2}\epsilon]} \int_M R e^{\alpha f} d\nu_t \leq \Psi(\delta|Y,\epsilon),$$
which itself follows from Cauchy's inequality, the almost-static inequalities, and (\ref{spacetime}),(\ref{timeslice}).
\end{proof}

\begin{rem} It is also possible to construct regularized versions of Bamler's almost radial functions (c.f. Section 13 of \cite{bamlergen3}) when $k>0$ near points which are almost-selfsimilar, almost-static, and almost-split, but we will not need this.
\end{rem}

\section{Improved Splitting for Noncollapsed K\"ahler-Ricci Flows}

Near a point which is almost-selfsimilar and almost-split, we obtain estimates on the Ricci curvature in the direction of the almost-splitting. 
\begin{lem} \label{tech}
Suppose $(M^n,(g_t)_{t\in I},(x,0))$ is a closed, pointed Ricci flow, $\mathcal{N}_{x,0}(1)\geq -Y$, $y\in C^{\infty}(M\times[-\delta^{-1},-\delta])$ is a strong
$(1,\delta,1)$-splitting map, and $(x,0)$ is $(\delta,1)$-selfsimilar. Then 
\[
\int_{-\epsilon^{-1}}^{-\epsilon}\int_{M}|Rc(\nabla y)|d\nu_{t}dt\leq\Psi(\delta|\epsilon,Y).
\]
\end{lem}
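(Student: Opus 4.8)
The plan is to argue by contradiction and compactness, paralleling the proofs of Propositions \ref{getsplit} and \ref{equiv}. Suppose the conclusion fails for some fixed $\epsilon$: there exist $\eta_0>0$, $\delta_i\searrow 0$, closed pointed Ricci flows $(M_i^n,(g_{i,t}),(x_i,0))$ with $\mathcal{N}_{x_i,0}(1)\geq -Y$ and $(x_i,0)$ $(\delta_i,1)$-selfsimilar, and strong $(1,\delta_i,1)$-splitting maps $y_i$, such that $\int_{-\epsilon^{-1}}^{-\epsilon}\int_{M_i}|Rc(\nabla y_i)|\,d\nu_{x_i,0;t}\,dt\geq\eta_0$ for all $i$. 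By Theorem \ref{bamconvergence} and the uniform Nash entropy bound, after passing to a subsequence we obtain $\mathbb{F}$-convergence within a correspondence, with smooth convergence on the regular part via diffeomorphisms $\psi_i\colon U_i\to M_i$, to a metric flow pair $(\mathcal{X}_\infty,(\nu_{\infty;t})_{t\in(-\infty,0]})$ with Ricci flow spacetime $\mathcal{R}_\infty$; write $d\nu_{\infty;t}=(4\pi\tau)^{-n/2}e^{-f_\infty}dg_\infty$ on $\mathcal{R}_\infty$.

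Next I would extract a limiting splitting function. The normalization $\int y_i\,d\nu_{x_i,0;t}=0$, the $L^2$ Poincar\'e inequality along the conjugate heat flow (available from the Nash entropy bound), and the bound $\int\int|\langle\nabla y_i,\nabla y_i\rangle-1|\,d\nu\,dt\leq\delta_i$ give $\sup_t\int_{M_i}y_i^2\,d\nu_{x_i,0;t}\leq C(Y,\epsilon)$; combining this with $\square y_i=0$, interior parabolic estimates on $\mathcal{R}_\infty$, and the uniform lower bounds on the conjugate heat densities along $\psi_i$ from Theorem \ref{bamconvergence}$(iv)$, we may pass to a further subsequence so that $\psi_i^\ast y_i\to y_\infty$ in $C^\infty_{loc}(\mathcal{R}_\infty)$, with $\square y_\infty=0$. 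Passing $\int\int|\langle\nabla y_i,\nabla y_i\rangle-1|\,d\nu\,dt\to 0$ to the limit gives $|\nabla y_\infty|^2\equiv 1$ on $\mathcal{R}_\infty$; since $\square y_\infty=0$ forces the spacetime identity $\square|\nabla y_\infty|^2=-2|\nabla^2 y_\infty|^2$, we get $\nabla^2 y_\infty\equiv 0$ on each time slice. A unit spatially-parallel vector field yields a local isometric product, so $Rc(g_\infty)(\nabla y_\infty)=0$ on $\mathcal{R}_\infty$.

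Finally I would derive the contradiction from a tightness estimate. Using $|Rc(\nabla y_i)|\leq|Rc(g_{i,t})|\,|\nabla y_i|$, H\"older, and $\int\int|\nabla y_i|^2\,d\nu\,dt\leq\delta_i+\epsilon^{-1}$, the portion of $\int\int|Rc(\nabla y_i)|\,d\nu\,dt$ supported on $\{f\geq L\}$ is at most $C(Y,\epsilon)\,e^{-\alpha L}$, by the weighted estimate \eqref{spacetime}, whose $e^{2\alpha f}$ weight forces the conjugate-heat $|Rc|^2$-mass to concentrate away from the singular set; this is uniformly small for large $L$. On the complement $\{f<L\}$, which for a.e.\ $L$ is precompact in $\mathcal{R}_\infty$ (by Bamler's partial regularity, cf.\ Lemma \ref{cutoff}) and hence, for large $i$, contained in $\psi_i(U_i)$, the smooth convergence together with $Rc(g_\infty)(\nabla y_\infty)=0$ shows this portion tends to $0$ as $i\to\infty$. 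Letting $i\to\infty$ and then $L\to\infty$ gives $\int\int|Rc(\nabla y_i)|\,d\nu\,dt\to 0$, contradicting $\eta_0$.

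The main obstacle is the uniform tightness used in the last step: one must know that the conjugate heat measures of the $M_i$ do not lose a definite amount of $|Rc|^2$-mass into the forming singularities, uniformly in $i$, and that this loss is captured by a single compact exhaustion of $\mathcal{R}_\infty$. The weighted bound \eqref{spacetime} supplies the first part, but converting it to the second requires the precompactness in $\mathcal{R}_\infty$ of the sublevel sets $\{f_\infty<L\}$ over the bounded time interval $[-\epsilon^{-1},-\epsilon]$ together with their asymptotic exhaustion by the $\psi_i$, which has to be extracted carefully from \cite{bamlergen3}. A secondary technical point is the local parabolic compactness of the $y_i$, handled by the Poincar\'e bound and standard interior estimates.
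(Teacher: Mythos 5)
Your overall strategy (contradiction, pass to an $\mathbb{F}$-limit, show the limit kills $Rc(\nabla y)$, and then a tightness argument) matches the paper's, and your re-derivation of the limit splitting $\nabla^2 y_\infty=0$, $Rc_{g_\infty}(\nabla y_\infty)=0$ via the normalization/Poincar\'e bound, interior parabolic estimates, and the Bochner identity $\square|\nabla y_\infty|^2=-2|\nabla^2 y_\infty|^2$ is a workable (if longer) substitute for the paper's direct citation of Theorem~15.50 of \cite{bamlergen3}.

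However, the tightness step has a genuine gap. You decompose $M_i$ into $\{f\geq L\}$ and $\{f<L\}$ and claim the latter is, for a.e.\ $L$, precompact in $\mathcal{R}_\infty$ ``by Bamler's partial regularity, cf.\ Lemma \ref{cutoff}.'' This is not true. The function $f$ is the conjugate-heat-kernel potential, and bounding it only confines a point to a bounded $P^{\ast}$-parabolic neighborhood (via the Gaussian bounds, Lemma~15.9 of \cite{bamlergen3}); it says nothing about the curvature scale $r_{Rm}$. Singular points of $\mathcal{X}$ can easily satisfy $f_\infty<L$, so $\{f_\infty<L\}$ can meet $\mathcal{S}$ and is generally not precompact in $\mathcal{R}_\infty$. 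Lemma \ref{cutoff} gives precompactness of $\{\eta_r>0\}\cap P^{\ast}(\cdot)\cap \mathcal{R}_t$ — that is, bounded curvature scale intersected with a $P^\ast$-ball — which is a strictly smaller region; the two conditions are independent. Consequently, your claim that the $\{f<L\}$ contribution converges to zero by smooth convergence along $\psi_i$ fails: the $\psi_i$-images need not cover $\{f<L\}$, and the part near the forming singularity is not captured. The paper sidesteps this entirely: it fixes a compact $K\subseteq\mathcal{R}_{[-\epsilon^{-1},-\epsilon]}$, uses smooth convergence on $\psi_{i,t}(K_t)$, and controls the complement $M_i\setminus\psi_{i,t}(K_t)$ by H\"older, pairing a uniform $L^{3/2}$ bound on $|Rc(\nabla y_i)|$ (itself from $|Rc|\in L^2$ and $|\nabla y_i|\in L^4$ by \eqref{spacetime} and Bamler's splitting-map estimates) against $\nu_t^i(M_i\setminus\psi_{i,t}(K_t))^{1/3}$, which converges to $(1-\mu_t(K_t))^{1/3}$ and vanishes as $K$ exhausts $\mathcal{R}$ because $\mu_t(\mathcal{S}_t)=0$. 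Your exponential-weight bound on $\{f\geq L\}$ is a valid estimate, but it does not substitute for this H\"older step; you would still need to handle $\{f<L\}\setminus\psi_i(K)$, which brings you back to the paper's argument.
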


\begin{proof}
Suppose by way of contradiction there exist $\epsilon>0$, $Y<\infty$,
a sequence $\delta_{i}\searrow0$ and closed Ricci flows $(M_{i},(g_{i,t})_{t\in(-\delta_{i}^{-1},0]})$
along with $(\delta_{i},1)$-selfsimilar points $(x_{i},0)$ and strong
$(1,\delta_{i},1)$-splitting maps $y_{i}\in C^{\infty}(M_{i}\times[-\delta_{i}^{-1},-\delta_{i}])$ based at $(x_i,0)$ such that 
\[
\liminf_{i\to\infty}\int_{-\epsilon^{-1}}^{-\epsilon}\int_{M}|Rc_{g_{i}}(\nabla y_{i})|d\nu_{t}^{i}dt>0,
\]
where $\nu^{i}$ is the conjugate heat kernel of $(M_{i},(g_{i,t})_{t\in[-\delta_{i}^{-1},0]})$
based at $(x_{i},0)$. By passing to a subsequence, we can assume $\mathbb{F}$-convergence
\[
(M_{i},(g_{i,t})_{t\in[-\delta_{i}^{-1},0]},(\nu_{t}^{i})_{t\in[-\delta_{i}^{-1},0]})\xrightarrow[i\to\infty]{\mathbb{F}}(\mathcal{X},(\mu_{t})_{t\in(-\infty,0]})
\]
on compact time intervals, where $\mathcal{X}$ is a future-continuous metric soliton; moreover, $d\mu_{t}=(4\pi\tau)^{-\frac{n}{2}}e^{-f}dg_{t}$, on $\mathcal{R}$, where $f\in C^{\infty}(\mathcal{R})$ satisfies $Rc+\nabla^2 f=\frac{1}{2\tau}g$ on $\mathcal{R}$. Let $(U_{i})$ be a precompact exhaustion of $\mathcal{R}$,
and let $\psi_{i}:U_{i}\to M_{i}$ be time-preserving diffeomorphisms
such that $\psi_{i}^{\ast}g_i\to g$ and $\psi_{i}^{\ast}K^{i}(x_{i},0;\cdot,\cdot)\to (4\pi \tau)^{-\frac{n}{2}}e^{-f}$ in $C_{loc}^{\infty}(\mathcal{R})$, where
$K^{i}$ is the conjugate heat kernel of $(M_{i},(g_{i,t})_{t\in(-\delta_{i}^{-1},0]})$.
By Theorem 15.50 of {[}Bam3{]}, we have a splitting of Ricci flow
spacetimes $\mathcal{R}\cong\mathcal{R}'\times\mathbb{R}$ and of
metric flows $\mathcal{X}\cong\mathcal{X}'\times\mathbb{R}$, and
$\psi_{i}^{\ast}y_{i}\to y_{\infty}$ in $C_{loc}^{\infty}(\mathcal{R})$,
where $y_{\infty}:\mathcal{X}\to\mathbb{R}$ is the projection onto
the $\mathbb{R}$-factor. In particular, we have $Rc_{g_{\infty}}(\nabla y_{\infty})=0$,
hence $Rc_{\psi_{i}^{\ast}g_{i}}(\nabla(\psi_{i}^{\ast}y_{i}))\to0$
in $C_{loc}^{\infty}(\mathcal{R})$ as $i\to\infty$. Let $K\subseteq\mathcal{R}_{[-\epsilon^{-1},-\epsilon]}$
be an arbitrary compact subset, and set $K_{t}:=\mathcal{X}_{t}\cap K$,
so that
\begin{align*}
\limsup_{i\to\infty}\int_{-\epsilon^{-1}}^{-\epsilon}\int_{M}|Rc_{g_{i}}(\nabla y_{i})|d\nu_{t}^{i}dt \hspace{-30 mm}& \\
\leq & \limsup_{i\to\infty}\left(\int_{-\epsilon^{-1}}^{-\epsilon}\int_{\psi_{i,t}(K_t)}|Rc_{g_{i}}(\nabla y_{i})|d\nu_{t}^{i}dt+\int_{-\epsilon^{-1}}^{-\epsilon}\int_{M_{i}\setminus\psi_{i,t}(K_{t})}|Rc_{g_{i}}(\nabla y_{i})|d\nu_{t}^{i}dt\right)\\
\leq & \limsup_{i\to\infty}\left(\int_{-\epsilon^{-1}}^{-\epsilon}\int_{K_{t}}|Rc_{\psi_{i}^{\ast}g_{i}}(\nabla(\psi_{i}^{\ast}y_{i}))|\psi_{i}^{\ast}K^{i}(x_{i},0;\cdot,\cdot)d(\psi_{i}^{\ast}g_{i,t})dt\right)\\
 & +\limsup_{i\to\infty}\left(\int_{-\epsilon^{-1}}^{-\epsilon}\int_{M_{i}\setminus\psi_{i,t}(K_{t})}|Rc_{g_{i}}(\nabla y_{i})|^{\frac{3}{2}}d\nu_{t}^{i}dt\right)^{\frac{2}{3}}\left(\int_{-\epsilon^{-1}}^{-\epsilon}\nu_{t}^{i}\left(M_{i}\setminus\psi_{i,t}(K_{t})\right)dt\right)^{\frac{1}{3}}\\
\leq & \limsup_{i\to\infty}\left(\int_{-\epsilon^{-1}}^{-\epsilon}\int_{M_{i}}|Rc_{g_{i}}(\nabla y_{i})|^{\frac{3}{2}}d\nu_{t}^{i}dt\right)^{\frac{2}{3}}\left(\int_{-\epsilon^{-1}}^{-\epsilon}\left(1-\nu_{t}^{i}(\psi_{i,t}(K_{t}))\right)dt\right)^{\frac{1}{3}}\\
\leq & \left(\int_{-\epsilon^{-1}}^{-\epsilon}\left(1-\mu_{t}(K_{t})\right)dt\right)^{\frac{1}{2}}\limsup_{i\to\infty}\left(\int_{-\epsilon^{-1}}^{-\epsilon}\int_{M_{i}}|Rc_{g_{i}}(\nabla y_{i})|^{\frac{3}{2}}d\nu_{t}^{i}dt\right)^{\frac{2}{3}}.
\end{align*}
We then estimate
\[
\int_{-\epsilon^{-1}}^{-\epsilon}\int_{M_{i}}|Rc_{g_{i}}(\nabla y_{i})|^{\frac{3}{2}}d\nu_{t}^{i}dt\leq\left(\int_{-\epsilon^{-1}}^{-\epsilon}\int_{M}|Rc_{g_{i}}|^{2}d\nu_{t}^{i}dt\right)^{\frac{3}{4}}\left(\int_{-\epsilon^{-1}}^{-\epsilon}\int_{M}|\nabla y_{i}|^{4}d\nu_{t}^{i}dt\right)^{\frac{1}{4}}\leq C(Y,\epsilon)
\]
for sufficiently large $i\in\mathbb{N}$. Since $K\subseteq\mathcal{R}_{[-\epsilon^{-1},-\epsilon]}$
was arbitrary, we obtain
\[
\limsup_{i\to\infty}\int_{-\epsilon^{-1}}^{-\epsilon}\int_{M}|Rc_{g_{i}}(\nabla y_{i})|d\nu_{t}^{i}dt=0,
\]
a contradiction. 
\end{proof}

\begin{rem} This proof is easily adapted to show $L^p$ bounds on $Rc(\nabla y)$ for any $p\in [1,2)$, but fails for $p=2$.  This creates additional technical difficulties in the proof of Proposition \ref{meat}.
\end{rem}

Next, we use the estimates for strong almost-soliton potential functions and strong almost-splitting maps to construct new almost splitting maps on a K\"ahler-Ricci flows. We observe that many of these estimates would fail without the use of the parabolic approximation $h$. 

\begin{prop} \label{meat}
Suppose $h\in C^{\infty}(M\times[-\delta^{-1},-\delta])$ is a strong
$(\delta,1)$-soliton potential function at $(x_{0},0)$ satisfying
$$ \int_{-\delta^{-1}}^{-\delta} \int_M |\nabla (h-f)|^2 d\nu_t dt \leq \delta,$$
and assume $y\in C^{\infty}(M\times[-\delta^{-1},-\delta])$
is a strong $(1,\delta,1)$-splitting map, where $(x_{0},0)$ is $(\delta,1)$-selfsimilar.
If $\delta \leq \overline{\delta}(Y,\epsilon,p)$, then the following hold, where $q:=4\tau (h-W)$ and $z:=\frac{1}{2}\langle\nabla q,J\nabla y\rangle$:

\begin{flalign*} & \hspace{5 mm} (i) \hspace{3 mm} \int_{-\epsilon^{-1}}^{-\epsilon}\int_{M}|\nabla^{2}q|^{2}\left( |\nabla y|^{2p} + |\nabla q|^{2p} \right) d\nu_{t}dt \leq C(Y,\epsilon,p) \text{ for each } p\in \mathbb{N},&
\end{flalign*}
\begin{flalign*} & \hspace{5 mm} (ii) \hspace{3 mm}
\int_{-\epsilon^{-1}}^{-\epsilon}\int_{M}\left||\nabla z|^{2}-1\right|d\nu_{t}dt\leq\Psi(\delta|Y,\epsilon),&
\end{flalign*}
\begin{flalign*} & \hspace{5 mm} (iii) \hspace{3 mm}
\int_{-\epsilon^{-1}}^{-\epsilon}\int_{M}\left|\langle\nabla z,\nabla y\rangle\right|d\nu_{t}dt\leq\Psi(\delta|Y,\epsilon),&
\end{flalign*}
\begin{flalign*} & \hspace{5 mm} (iv) \hspace{3 mm} \int_{-\epsilon^{-1}}^{-\epsilon}\int_{M}|\square z|d\nu_{t}dt\leq\Psi(\delta|Y,\epsilon),&
\end{flalign*}
\begin{flalign*} & \hspace{5 mm} (v) \hspace{3 mm} \int_{-\epsilon^{-1}}^{-\epsilon}\int_M |\nabla z -J\nabla y|^2 d\nu_t dt \leq \Psi (\delta |Y,\epsilon).&
\end{flalign*}

\noindent In particular, for $\delta\leq\overline{\delta}(Y,\epsilon)$,
$(y,z)$ is a weak $(2,\epsilon,1)$-splitting map. 
\end{prop}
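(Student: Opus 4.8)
The strategy is to mimic the smooth soliton computation $\nabla z = J\nabla y - 2Rc(J\nabla y) = J\nabla y$ in the weak/integral sense, using the estimates from Section 3 and 4. I would first establish the $L^2$-type bounds in $(i)$. For the factor $|\nabla q|^{2p}$, I note that $\nabla q = 4\tau\nabla h$, so $|\nabla q|$ is controlled by (\ref{gradient}) in Proposition \ref{stronger}; the factor $|\nabla y|^{2p}$ is controlled by hypercontractivity applied to the strong splitting map $y$ (as in the proof of Proposition \ref{stronger}, starting from the $L^2$ bound on $\nabla y$ coming from the splitting inequality). The Hessian factor $|\nabla^2 q|^2 = 16\tau^2|\nabla^2 h|^2$ is controlled, after writing $\nabla^2 h = (Rc + \nabla^2 h - \frac{1}{2\tau}g) - Rc + \frac{1}{2\tau}g$, by the improved weighted estimate (\ref{improved}) together with (\ref{spacetime}); the weight $e^{\alpha f}$ there absorbs the polynomial growth of $|\nabla y|^{2p}$ and $|\nabla q|^{2p}$ via Young's inequality and the estimate $\tau^{-1}e^{\alpha f}\in L^1(d\nu)$ from (\ref{spacetime}). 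This is the step I expect to be the main obstacle, precisely because of the $p=2$ failure flagged in the remark after Lemma \ref{tech}: one cannot get an $L^2$ bound on $Rc(\nabla y)$, so the argument for $(ii)$--$(v)$ must avoid squaring that term, and $(i)$ is what lets us run Cauchy--Schwarz with the surviving exponents.

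Next I would compute $\nabla z$ directly. Since $z = \frac{1}{2}\langle\nabla q, J\nabla y\rangle$ and $J$ is parallel and $\square$-compatible, differentiating gives $\nabla_k z = \frac{1}{2}(\nabla^2 q)(e_k, J\nabla y) + \frac{1}{2}\langle \nabla q, J\nabla^2 y(e_k,\cdot)\rangle$. Using $\nabla^2 q = 2g + (\nabla^2 q - 2g)$ and the almost-selfsimilar/soliton bound that $\nabla^2 q - 2g$ is small in the appropriate $L^2$-weighted sense (from (\ref{improved}) after rewriting $\nabla^2 q - 2g = 4\tau(\nabla^2 h + Rc - \frac{1}{2\tau}g) - 4\tau Rc$, where the Ricci term is small in $L^1(d\nu)$ by almost-self-similarity combined with Lemma \ref{tech}-type reasoning), the first term is $\approx g(e_k, J\nabla y) = -\langle e_k, J\nabla y\rangle$ — wait, the sign: $g(e_k,J\nabla y) = \omega(\nabla y, e_k)$, i.e. the first term contributes $(J\nabla y)_k$ up to $J$-antisymmetry conventions; the point is it is exactly $\pm(J\nabla y)_k$. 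For the second term, $\nabla^2 y$ is small in $L^2(d\nu)$ because $y$ is a strong splitting map (so $\square y = 0$ and $\langle\nabla y_i,\nabla y_j\rangle \approx \delta_{ij}$ force $\int|\nabla^2 y|^2 d\nu_t dt$ small by the standard Bochner argument on weak splitting maps, cf. Theorem 12.1 of \cite{bamlergen3}), and paired against $\nabla q\in L^p$ via Cauchy--Schwarz this term is $\Psi(\delta|Y,\epsilon)$. This yields $(v)$: $\int|\nabla z - J\nabla y|^2 d\nu_t dt \leq \Psi(\delta|Y,\epsilon)$.

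Given $(v)$, statements $(ii)$ and $(iii)$ follow immediately: $|\nabla z|^2 - 1 = (|\nabla z|^2 - |J\nabla y|^2) + (|\nabla y|^2 - 1)$, where the first difference is $\langle \nabla z - J\nabla y, \nabla z + J\nabla y\rangle$, estimated by Cauchy--Schwarz using $(v)$ and the $L^2$ bounds on $\nabla z, \nabla y$ (the latter from $(i)$-type control on $\nabla z$, obtained along the way), and the second is $\Psi(\delta|Y,\epsilon)$ since $y$ is a splitting map; similarly $\langle\nabla z,\nabla y\rangle = \langle\nabla z - J\nabla y,\nabla y\rangle + \langle J\nabla y,\nabla y\rangle$ and the last term vanishes pointwise by antisymmetry of $\omega$. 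For $(iv)$, I compute $\square z$ using the commutator $[\square,\nabla] $ and $\square q = -2n$, $\square y = 0$: schematically $\square z = \frac{1}{2}\langle\nabla(\square q), J\nabla y\rangle + \frac{1}{2}\langle\nabla q, J\nabla(\square y)\rangle + (\text{curvature terms from }[\square,\nabla]) = 0 + 0 + (\text{terms involving } Rc * \nabla^2 q, Rc * \nabla q, \nabla^3 q * \nabla y \dots)$ — more carefully, $\square\langle\nabla q,J\nabla y\rangle = \langle\nabla\square q, J\nabla y\rangle + \langle\nabla q, J\nabla\square y\rangle - 2\langle\nabla^2 q, J\nabla^2 y\rangle$ (the cross Hessian term from the heat operator acting on a product), plus lower-order Ricci corrections; each surviving term is handled by Cauchy--Schwarz pairing a small $L^2$ factor ($\nabla^2 y$ small, or $\nabla^2 q - 2g$ small, or $Rc(\nabla y)$ small by Lemma \ref{tech}, noting $Rc\cdot\nabla^2 q = 2Rc + Rc(\nabla^2 q - 2g)$ so $Rc$-against-$J\nabla^2 y$ is fine since $\nabla^2 y$ is small) against a bounded $L^2$ or $L^p$ factor supplied by $(i)$. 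Finally, combining $(ii)$, $(iii)$, $(iv)$ with the splitting inequalities for $y$ itself and recentering $z$ by its $\nu_t$-mean, $(y,z)$ satisfies all defining inequalities of a weak $(2,\epsilon,1)$-splitting map once $\delta\leq\overline{\delta}(Y,\epsilon)$. The recurring technical care is to always leave the genuinely only-$L^1$-or-$L^{3/2}$ quantities ($Rc$, $Rc(\nabla y)$) unsquared and to spend the weight $e^{\alpha f}$ from (\ref{spacetime})--(\ref{improved}) on controlling the polynomial-in-$|\nabla y|,|\nabla q|$ factors.
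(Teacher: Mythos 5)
Your proposal correctly identifies the central obstruction — the $p=2$ failure of $L^p$ control on $Rc(\nabla y)$ flagged in the remark after Lemma \ref{tech} — but then the route you choose runs directly into it. You propose proving $(v)$ first by computing
$$\nabla_k z - (J\nabla y)_k = \tfrac{1}{2}(\nabla^2 q - 2g)(e_k, J\nabla y) + \tfrac{1}{2}\nabla^2 y(e_k, J\nabla q)$$
and showing each term is small in $L^2(d\nu)$. For the first term you write $\nabla^2 q - 2g = (4\tau Rc + \nabla^2 q - 2g) - 4\tau Rc$, and you explicitly say the Ricci piece is ``small in $L^1$''. But for $(v)$ you need $\int \tau^2 |Rc(\nabla y)|^2 d\nu_t\,dt$ small (note $|Rc(J\nabla y)| = |Rc(\nabla y)|$ in the K\"ahler setting), and this is precisely the $L^2$ bound that Lemma \ref{tech} does not give and that the remark says you cannot get. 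Deriving $(ii)$, $(iii)$ from $(v)$ therefore inherits the same gap.

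The paper's proof does not go through $(v)$. It proves $(ii)$ first, via a trick that never squares $Rc(\nabla y)$ in isolation: the key identity is that $|(4\tau Rc - 2g)(\nabla y)|^2 = |(4\tau Rc - 2g)(J\nabla y)|^2$ exactly (since $Rc$ commutes with $J$), so
$$|\nabla^2 q(\nabla y)|^2 - |\nabla^2 q(J\nabla y)|^2$$
factors, after a difference-of-squares, into terms of the form $\langle(\pm 4\tau Rc + \nabla^2 q \mp 2g)(V),\,(\mp 4\tau Rc + \nabla^2 q \pm 2g)(V)\rangle$ with $V \in \{\nabla y, J\nabla y\}$. Cauchy--Schwarz then pairs a \emph{small} $L^2$ factor ($4\tau Rc + \nabla^2 q - 2g$, from the soliton estimate) against a merely \emph{bounded} $L^2$ factor, so $Rc(\nabla y)$ never appears alone under an $L^2$ norm. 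Combined with the earlier $L^1$-Poincar\'e estimate $\int|\langle\nabla q,\nabla y\rangle - 2y|\,d\nu \leq \Psi$, this gives $\int||\nabla^2 q(J\nabla y)|^2 - 4|\,d\nu \leq \Psi$ and hence $(ii)$. Statement $(v)$ is then deduced \emph{from} $(ii)$ and Lemma \ref{tech}, using H\"older with exponent strictly below $2$ on $Rc(\nabla y)$. This reversal of the logical order (prove $(ii)$, then $(v)$, not the other way around) is not cosmetic: it is exactly what sidesteps the $p=2$ obstruction you flagged. Your outlines for $(i)$, $(iii)$ (via skew-symmetry of $Rc(J\cdot,\cdot)$ and $\omega$) and $(iv)$ (via $\square\nabla u = \nabla\square u - Rc(\nabla u)$, with exact cancellation of the Ricci terms) are in the right spirit, but the core of $(ii)$--$(v)$ needs the difference-of-norm-squares mechanism, not a pointwise expansion of $\nabla z - J\nabla y$.
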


\begin{proof}
$(i)$ Observe that 
$$\square |\nabla q|^{2(p+1)} \leq (p+1)|\nabla q|^{2p}\square |\nabla q|^2 =-2(p+1)|\nabla q|^{2p}|\nabla^2 q|^2 .$$
Upon integration, (\ref{gradient}) lets us estimate
$$2(p+1)\int_{-\epsilon^{-1}}^{-\epsilon} \int_M |\nabla^2 q|^2 |\nabla q|^{2p} d\nu_t dt \leq - \left. \int_M |\nabla q|^{2(p+1)}d\nu_t \right|_{t=-2\epsilon^{-1}}^{t=-\frac{1}{2}\epsilon}\leq C(Y,\epsilon,p)$$
assuming $\delta \leq \overline{\delta}(Y,\epsilon,p)$. Next, we estimate
\begin{align*} \square \left( |\nabla y|^{2(p+1)}|\nabla q|^2 \right) &\leq -2(p+1)|\nabla^2 y|^2 |\nabla y|^{2p}|\nabla q|^2 -2|\nabla y|^{2(p+1)}|\nabla^2 q|^2 -2\langle \nabla |\nabla y|^{2(p+1)}, \nabla |\nabla q|^2 \rangle \\& \leq -2|\nabla y|^{2(p+1)}|\nabla^2 q|^2 +8(p+1)|\nabla^2 y|\cdot |\nabla y|^{2p+1}|\nabla^2 q|\cdot |\nabla q|
\end{align*}
Integration on $M\times[-2\epsilon^{-1},-\frac{1}{2}\epsilon]$ then gives
\begin{align*}
\int_{-2\epsilon^{-1}}^{-\frac{1}{2}\epsilon}\int_{M}|\nabla^{2}q|^{2}|\nabla y|^{2p}d\nu_{t}dt\hspace{-30 mm}& \\ 
\leq& \left.\int_{M}|\nabla y|^{2(p+1)}|\nabla q|^{2}d\nu_{t}\right|_{t=-2\epsilon^{-1}}^{t=-\frac{1}{2}\epsilon}\\&+8(p+1)\left(\int_{-2\epsilon^{-1}}^{-\frac{1}{2}\epsilon}\int_{M}|\nabla^{2}y|^{2}|\nabla y|^{2(2p+1)}d\nu_{t}dt\right)^{\frac{1}{2}}\left(\int_{-2\epsilon^{-1}}^{-\frac{1}{2}\epsilon}\int_{M}|\nabla^{2}q|^{2}|\nabla q|^{2}d\nu_{t}dt\right)^{\frac{1}{2}}\\
\leq & C(Y,\epsilon,p)
\end{align*}
assuming $\delta\leq\overline{\delta}(Y,\epsilon,p)$, where we used Bamler's estimates for strong almost-splitting maps (Proposition 12.21 of {[}Bam3{]}).

$(ii)$ For any $t\in [-2\epsilon^{-1},-\frac{1}{2}\epsilon]$, 
\begin{align*}
\int_{M}\left(\langle\nabla q,\nabla y\rangle-2y\right)d\nu_{t}= & -(4\pi\tau)^{-\frac{n}{2}}4\tau\int_{M}\langle\nabla e^{-f},\nabla y\rangle dg_{t}+4\tau\int_{M}\langle\nabla(h-f),\nabla y\rangle d\nu_{t}\\
= & 4\tau\int_{M}(\Delta y)d\nu_{t}+4\tau\int_{M}\langle\nabla(h-f),\nabla y\rangle d\nu_{t},
\end{align*}
so we can estimate
\begin{align*}
\int_{-2\epsilon^{-1}}^{-\frac{1}{2}\epsilon}\left|\int_{M}\left(\langle\nabla q,\nabla y\rangle-2y\right)d\nu_{t}\right| dt \leq & 4\tau n\int_{-2\epsilon^{-1}}^{-\frac{1}{2}\epsilon}\int_{M}|\nabla^{2}y|^{2}d\nu_{t}dt\\&+4\tau\left(\int_{-2\epsilon^{-1}}^{-\frac{1}{2}\epsilon}\int_{M}|\nabla(h-f)|^{2}d\nu_{t}dt\right)^{\frac{1}{2}}\left(\int_{-2\epsilon^{-1}}^{-\frac{1}{2}\epsilon}\int_{M}|\nabla y|^{2}d\nu_{t}dt\right)^{\frac{1}{2}}\\
\leq & \Psi(\delta|Y,\epsilon).
\end{align*}
Using the $L^{1}$-Poincare inequality and Lemma \ref{tech}, \begin{align*}
\int_{-2\epsilon^{-1}}^{-\frac{1}{2}\epsilon}\int_{M}\left|\langle\nabla q,\nabla y\rangle-2y\right|d\nu_{t}dt  \hspace{-35 mm}& \\ \leq & \int_{-2\epsilon^{-1}}^{-\frac{1}{2}\epsilon}\left| \int_{M}\left(\langle\nabla q,\nabla y\rangle-2y\right)d\nu_{t}\right| dt +C\int_{-2\epsilon^{-1}}^{-\frac{1}{2}\epsilon}\tau\int_{M}\left|\nabla\left(\langle\nabla q,\nabla y\rangle-2y\right)\right|d\nu_{t}dt\\
\leq & \Psi(\delta|Y,\epsilon)+C(\epsilon)\int_{-2\epsilon^{-1}}^{-\frac{1}{2}\epsilon}\int_{M}\left|(4\tau Rc+\nabla^{2}q-2g)(\nabla y)\right|d\nu_{t}dt +C(\epsilon)\int_{-2\epsilon^{-1}}^{-\frac{1}{2}\epsilon}\int_{M}|Rc(\nabla y)|d\nu_{t}dt\\
 & +C(\epsilon)\left(\int_{-2\epsilon^{-1}}^{-\frac{1}{2}\epsilon}\int_{M}|\nabla^{2}y|^{2}d\nu_{t}dt\right)^{\frac{1}{2}}\left(\int_{-2\epsilon^{-1}}^{-\frac{1}{2}\epsilon}\int_{M}|\nabla q|^{2}d\nu_{t}dt\right)^{\frac{1}{2}}\\
\leq & \Psi(\delta|Y,\epsilon)+C(\epsilon)\left(\int_{-2\epsilon^{-1}}^{-\frac{1}{2}\epsilon}\int_{M}|4\tau Rc+\nabla^{2}q-2g|^{2}d\nu_{t}dt\right)^{\frac{1}{2}}\left(\int_{-2\epsilon^{-1}}^{-\frac{1}{2}\epsilon}\int_{M}|\nabla y|^{2}d\nu_{t}dt\right)^{\frac{1}{2}}\\
\leq & \Psi(\delta|Y,\epsilon).
\end{align*}
Then H\"older's inequality and (\ref{gradient}) give
\begin{align*}
\int_{-2\epsilon^{-1}}^{-\frac{1}{2}\epsilon}\int_{M}\left(\langle\nabla q,\nabla y\rangle-2y\right)^{2}d\nu_{t}dt \hspace{-20 mm}& \\ \leq& \left(\int_{-2\epsilon^{-1}}^{-\frac{1}{2}\epsilon}\int_{M}\left|\langle\nabla q,\nabla y\rangle-2y\right|d\nu_{t}dt\right)^{\frac{1}{2}}\left(\int_{-2\epsilon^{-1}}^{-\frac{1}{2}\epsilon}\int_{M}\left|\langle\nabla q,\nabla y\rangle-2y\right|^{3}d\nu_{t}dt\right)^{\frac{1}{2}}\\
\leq & \Psi(\delta|Y,\epsilon)\left(\int_{-2\epsilon^{-1}}^{-\frac{1}{2}\epsilon}\int_{M}(|\nabla q|^{3}|\nabla y|^{3}+|y|^{3})d\nu_{t}dt\right)^{\frac{1}{2}}\\
\leq & \Psi(\delta|Y,\epsilon).
\end{align*}
Next, we compute $$
\square\left(\langle\nabla q,\nabla y\rangle-2y\right)^{2}=-2\left|\nabla\left(\langle\nabla q,\nabla y\rangle-2y\right)\right|^{2}-4\left(\langle\nabla q,\nabla y\rangle-2y\right)\langle\nabla^{2}q,\nabla^{2}y\rangle. $$
Fix a cutoff function $\zeta\in C^{\infty}([-2\epsilon^{-1},-\frac{1}{2}\epsilon])$
such that $\zeta(-2\epsilon^{-1})=\zeta(-\frac{1}{2}\epsilon)=0$,
$\zeta|[-\epsilon^{-1},-\epsilon]\equiv1$, and $|\zeta'|\leq4\epsilon^{-1}$.
Then 
\begin{align*}
\int_{-2\epsilon^{-1}}^{-\frac{1}{2}\epsilon}\int_{M}\zeta'\left(\langle\nabla q,\nabla y\rangle-2y\right)^{2}d\nu_{t}dt=&-  \int_{-2\epsilon^{-1}}^{-\frac{1}{2}\epsilon}\zeta\left(\frac{d}{dt}\int_{M}\left(\langle\nabla q,\nabla y\rangle-2y\right)^{2}d\nu_{t}\right)dt\\
=&  2\int_{-2\epsilon^{-1}}^{-\frac{1}{2}\epsilon}\int_{M}\zeta\left|\nabla\left(\langle\nabla q,\nabla y\rangle-2y\right)\right|^{2}d\nu_{t}dt\\
 & +4\int_{-2\epsilon^{-1}}^{-\frac{1}{2}\epsilon}\int_{M}\zeta\left(\langle\nabla q,\nabla y\rangle-2y\right)\langle\nabla^{2}q,\nabla^{2}y\rangle d\nu_{t}dt,
\end{align*}
so that (using part $(i)$ and Proposition 12.21 of \cite{bamlergen3})
\begin{align*}
\int_{-\epsilon^{-1}}^{-\epsilon}\int_{M}\left|\nabla\left(\langle\nabla q,\nabla y\rangle-2y\right)\right|^{2}d\nu_{t}dt \hspace{-30 mm}&\\ \leq & 4\epsilon^{-1}\int_{-2\epsilon^{-1}}^{-\frac{1}{2}\epsilon}\int_{M}\left(\langle\nabla q,\nabla y\rangle-2y\right)^{2}d\nu_{t}dt +4\int_{-2\epsilon^{-1}}^{-\frac{1}{2}\epsilon}\int_{M}|\nabla q||\nabla^{2}q|\cdot|\nabla y||\nabla^{2}y|d\nu_{t}dt\\
 & +4\int_{-2\epsilon^{-1}}^{-\frac{1}{2}\epsilon}\int_{M}|\nabla^{2}q|\cdot|\nabla^{2}y||y|d\nu_{t}dt\\
\leq & \Psi(\delta|Y,\epsilon)+4\left(\int_{-2\epsilon^{-1}}^{-\frac{1}{2}\epsilon}\int_{M}|\nabla^{2}q|^{2}|\nabla q|^{2}d\nu_{t}dt\right)^{\frac{1}{2}}\left(\int_{-2\epsilon^{-1}}^{-\frac{1}{2}\epsilon}\int_{M}|\nabla^{2}y|^{2}|\nabla y|^{2}d\nu_{t}dt\right)^{\frac{1}{2}}\\
 & +4\left(\int_{-2\epsilon^{-1}}^{-\frac{1}{2}\epsilon}\int_{M}|\nabla^{2}q|^{2}d\nu_{t}dt\right)^{\frac{1}{2}}\left(\int_{-2\epsilon^{-1}}^{-\frac{1}{2}\epsilon}\int_{M}|\nabla^{2}y|^{2}|y|^{2}d\nu_{t}dt\right)^{\frac{1}{2}}\\
\le & \Psi(\delta|Y,\epsilon).
\end{align*}
Integrating
$$\square \left( |\nabla y|^2 |\nabla q|^4 \right) \leq -2|\nabla^2 y|^2 |\nabla q|^4 -16\langle \nabla^2 y(\nabla y),\nabla^2 q(\nabla q)\rangle |\nabla q|^2$$
against the conjugate heat kernel, and applying part $(i)$ and (\ref{gradient}) gives
\begin{align*} \int_{-\epsilon^{-1}}^{-\epsilon} \int_M |\nabla^2 y|^2 |\nabla q|^4 d\nu_t dt \leq & \left| \left. \int_M |\nabla y|^2 |\nabla q|^2 d\nu_t \right|_{t=-\epsilon^{-1}}^{t=-\epsilon} \right| \\&+ 8 \left( \int_{-\epsilon^{-1}}^{-\epsilon} \int_M |\nabla^2 y|^2 |\nabla y|^2 d\nu_t dt \right)^{\frac{1}{2}} \left( \int_{-\epsilon^{-1}}^{-\epsilon} \int_M |\nabla^2 q|^2 |\nabla q|^6 d\nu_t dt \right)^{\frac{1}{2}} \\ \leq & C(Y,\epsilon)
\end{align*}
assuming $\delta \leq \overline{\delta}(Y,\epsilon)$. We can use H\"older's inequality to estimate
\begin{align*} \int_{-\epsilon^{-1}}^{-\epsilon} \int_M \left| (\nabla^2 q-2g)(\nabla y)\right|^2 d\nu_t dt \hspace{-20 mm} &\\
&\leq  2\int_{-\epsilon^{-1}}^{-\epsilon}\int_M \left| \nabla (\langle \nabla q,\nabla y\rangle -2y)\right|^2 d\nu_t dt +2\int_{-\epsilon^{-1}}^{-\epsilon}\int_M |\nabla^2 y|^2 |\nabla q|^2 d\nu_t dt\\
&\leq \Psi(\delta|Y,\epsilon)+\left( \int_{-\epsilon^{-1}}^{-\epsilon}|\nabla^2 y|^2 |\nabla q|^4 d\nu_t dt\right)^{\frac{1}{2}}\left( \int_{-\epsilon^{-1}}^{-\epsilon}\int_M |\nabla^2 y|^2 d\nu_t dt\right)^{\frac{1}{2}}\\
&\leq \Psi(\delta|Y,\epsilon)
\end{align*}
and we also obtain the rough estimate
$$\int_{-\epsilon^{-1}}^{-\epsilon} \int_M \left| (\nabla^2 q+2g)(\nabla y)\right|^2 d\nu_t dt \leq 2\int_{-\epsilon^{-1}}^{-\epsilon} \int_M \left( |\nabla^2 q|^2 |\nabla y|^2 +4|\nabla y|^2 \right) d\nu_t dt \leq C(Y,\epsilon).$$
Now use H\"older's inequality, and combine estimates:
\begin{align*} \int_{-\epsilon^{-1}}^{-\epsilon}\int_{M}\left||\nabla^{2}q(\nabla y)|^2-4\right|d\nu_{t}dt \hspace{-30 mm} & \\
\leq &  \int_{-\epsilon^{-1}}^{-\epsilon}\int_M \left\langle (\nabla^2 q+2g)(\nabla y),(\nabla^2 q-2g)(\nabla y)\right\rangle d\nu_t dt+ 4\int_{-\epsilon^{-1}}^{-\epsilon} \int_M \left| 1-|\nabla y|^2 \right| d\nu_t dt \\
\leq & \Psi(\delta|Y,\epsilon).
\end{align*}
On the other hand, we can estimate
\begin{align*}
\int_{-\epsilon^{-1}}^{-\epsilon}\int_{M}\left||\nabla^{2}q(\nabla y)|^{2}-|\nabla^{2}q(J\nabla y)|^{2}\right|d\nu_{t}dt \hspace{-30 mm}&
\\ \leq&\left(\int_{-\epsilon^{-1}}^{-\epsilon}\int_{M}\left|\left|\nabla^{2}q\left(\frac{\nabla y}{\sqrt{1+|\nabla y|^2}}\right)\right|^{2}-\left|\nabla^{2}q\left(J\frac{\nabla y}{\sqrt{1+|\nabla y|^2}}\right)\right|^{2}\right|d\nu_{t}dt\right)^{\frac{1}{2}}\\
&\times\left(\int_{-\epsilon^{-1}}^{-\epsilon}\int_{M}\left||\nabla^{2}q(\nabla y)|^{2}-|\nabla^{2}q(J\nabla y)|^{2}\right|\left(1+|\nabla y|^{2}\right)d\nu_{t}dt\right)^{\frac{1}{2}}.
\end{align*}
To estimate the first integral, we observe that
\begin{align*}
\left|\left|\nabla^{2}q\left(\nabla y\right)\right|^{2}-\left|\nabla^{2}q\left(J\nabla y\right)\right|^{2}\right| \hspace{-20 mm}&\\ =&\left|\left(\left|\nabla^{2}q\left(\nabla y\right)\right|^{2}-|(4\tau Rc-2g)(\nabla y)|^{2}\right)-\left(\left|\nabla^{2}q\left(J\nabla y\right)\right|^{2}-|(4\tau Rc-2g)(J\nabla y)|^{2}\right)\right|\\
\leq & \left|\left\langle \left(-4\tau Rc+\nabla^{2}q+2g\right)(\nabla y),\left(4\tau Rc+\nabla^{2}q-2g\right)(\nabla y)\right\rangle \right|\\
 & +\left|\left\langle \left(-4\tau Rc+\nabla^{2}q+2g\right)(J\nabla y),\left(4\tau Rc+\nabla^{2}q-2g\right)(J\nabla y)\right\rangle \right|,
\end{align*}
so that 
\begin{align*} \int_{-\epsilon^{-1}}^{-\epsilon}\int_{M}\left|\left|\nabla^{2}q\left(\frac{\nabla y}{\sqrt{1+|\nabla y|^2}}\right)\right|^{2}-\left|\nabla^{2}q\left(J\frac{\nabla y}{\sqrt{1+|\nabla y|^2}}\right)\right|^{2}\right|d\nu_{t}dt \hspace{-80 mm} \\
\leq& 2\left(\int_{-\epsilon^{-1}}^{-\epsilon}\int_{M}|4\tau Rc+\nabla^{2}q-2g|^{2}d\nu_{t}dt\right)^{\frac{1}{2}} \left(\int_{-\epsilon^{-1}}^{-\epsilon}\int_{M}|4\tau Rc-\nabla^{2}q-2g|^{2}d\nu_{t}dt\right)^{\frac{1}{2}}\\
\leq & \Psi(\delta|Y,\epsilon). \end{align*}
For the second integral, we only need the course upper bound 
\begin{align*} \int_{-\epsilon^{-1}}^{-\epsilon}\int_{M}\left||\nabla^{2}q(\nabla y)|^{2}-|\nabla^{2}q(J\nabla y)|^{2}\right|\left(1+|\nabla y|^{2}\right)d\nu_{t}dt&\leq\int_{-\epsilon^{-1}}^{-\epsilon}\int_{M}4|\nabla^{2}q|^{2}\left(|\nabla y|^{4}+|\nabla y|^2 \right)d\nu_{t}dt\\
&\leq C(Y,\epsilon)\end{align*}
by part $(i)$. Combining expressions, we finally obtain
\begin{align*} \int_{-\epsilon^{-1}}^{-\epsilon} \int_M \left| |\nabla z|^2-1 \right| d\nu_t dt \leq &  \frac{1}{4}\int_{-\epsilon^{-1}}^{-\epsilon} \int_M \left| |\nabla^2 q(J\nabla y)|^2-4 \right| d\nu_t dt +\frac{1}{4} \int_{-\epsilon^{-1}}^{-\epsilon} \int_M |\nabla^2 y|^2 |\nabla q|^2 d\nu_t dt \\ &+ \frac{1}{2}\left( \int_{-\epsilon^{-1}}^{-\epsilon} \int_M |\nabla^2 q|^2 |\nabla q|^2 d\nu_t dt \right)^{\frac{1}{2}}\left( \int_{-\epsilon^{-1}}^{-\epsilon} \int_M |\nabla^2 y|^2 |\nabla q|^2 d\nu_t dt \right)^{\frac{1}{2}}\\ \leq & \Psi(\delta|Y,\epsilon).
\end{align*}

\noindent $(iii)$ Because $X\mapsto Rc(JX,X)$ and $X\mapsto g(JX,X)$
are skew-symmetric, we have
\begin{align*}
\langle\nabla z,\nabla y\rangle= & \frac{1}{2}\nabla^{2}q(J\nabla y,\nabla y)+\frac{1}{2}\langle\nabla_{\nabla y}J\nabla y,\nabla q\rangle\\
= & \frac{1}{2}\left(4\tau Rc+\nabla^{2}q-2g\right)(J\nabla y,\nabla y)-\frac{1}{2}\nabla^{2}y(\nabla y,J\nabla q),\end{align*}
which allows us to estimate (again using Proposition 12.21 of \cite{bamlergen3})
\begin{align*} \int_{-\epsilon^{-1}}^{-\epsilon}\int_{M}\left|\langle\nabla z,\nabla y\rangle\right|d\nu_{t}dt \leq & \int_{-\epsilon^{-1}}^{-\epsilon}\int_{M}\left|\langle4\tau Rc+\nabla^{2}q-2g,\nabla y\otimes J\nabla y\rangle\right|d\nu_{t}dt\\
 & +\int_{-\epsilon^{-1}}^{-\epsilon}\int_{M}|\nabla^{2}y|\cdot|\nabla y|\cdot|\nabla q|d\nu_{t}dt\\
\leq & \left(\int_{-\epsilon^{-1}}^{-\epsilon}\int_{M}\left|4\tau Rc+\nabla^{2}q-2g\right|^{2}d\nu_{t}dt\right)^{\frac{1}{2}}\left(\int_{-\epsilon^{-1}}^{-\epsilon}\int_{M}|\nabla y|^{4}d\nu_{t}dt\right)^{\frac{1}{2}}\\
 & +\left(\int_{-\epsilon^{-1}}^{-\epsilon}\int_{M}|\nabla^{2}y|^{2}|\nabla y|^{2}d\nu_{t}dt\right)^{\frac{1}{2}}\left(\int_{-\epsilon^{-1}}^{-\epsilon}\int_{M}|\nabla q|^{2}d\nu_{t}dt\right)^{\frac{1}{2}}\\
\leq & \Psi(\delta|Y,\epsilon).
\end{align*}
$(iv)$ We compute
\begin{align*}
\square\langle\nabla q,J\nabla y\rangle= & 2Rc(\nabla q,J\nabla y)+\langle\square\nabla q,J\nabla y\rangle+\langle\nabla q,J\square\nabla y\rangle-\langle\nabla^{2}q,\nabla(J\nabla y)\rangle\\
= & 2Rc(\nabla q,J\nabla y)-\langle Rc(\nabla q),J\nabla y\rangle-\langle\nabla q,JRc(\nabla y)\rangle-\langle\nabla^{2}q,\nabla(J\nabla y)\rangle\\
= & -\langle\nabla^{2}q,\nabla(J\nabla y)\rangle,
\end{align*}
so that 
$$\int_{-\epsilon^{-1}}^{-\epsilon}\int_{M}|\square z|d\nu_{t}dt\leq\left(\int_{-\epsilon^{-1}}^{-\epsilon}\int_{M}|\nabla^{2}q|^{2}d\nu_{t}dt\right)^{\frac{1}{2}}\left(\int_{-\epsilon^{-1}}^{-\epsilon}\int_{M}|\nabla^{2}y|^{2}d\nu_{t}dt\right)^{\frac{1}{2}}\leq\Psi(\delta|Y,\epsilon).$$
$(v)$ Using part $(ii)$ and Lemma \ref{tech}, we estimate
\begin{align*} \int_{-\epsilon^{-1}}^{-\epsilon} \int_M \left| \nabla z-J\nabla y \right|^2 d\nu_t dt \hspace{-30 mm}& \\
&= \int_{-\epsilon^{-1}}^{-\epsilon} \int_M \left( |\nabla z| ^2 + |\nabla y|^2 - 2\langle \nabla z,J\nabla y\rangle \right)d\nu_t dt\\ & \leq \int_{-\epsilon^{-1}}^{-\epsilon} \int_M \left( 2 - \nabla^2 q(J\nabla y,J\nabla y) \right) d\nu_t dt + \int_{-\epsilon^{-1}}^{-\epsilon} \int_M |\nabla^2 y| \cdot |\nabla y|\cdot |\nabla q| d\nu_t dt +\Psi(\delta |Y,\epsilon) \\
&\leq  \int_{-\epsilon^{-1}}^{-\epsilon} \int_M \left| 4\tau Rc+\nabla^2 q-2g \right| \cdot |\nabla y|^2 d\nu_t dt \\ & \hspace{4 mm}+ \int_{-\epsilon^{-1}}^{-\epsilon}\int_M |Rc(\nabla y)|\cdot |\nabla y| d\nu_t dt +\Psi(\delta |Y,\epsilon) \\ &\leq \int_{-\epsilon^{-1}}^{-\epsilon}\int_M |Rc(\nabla y)|\cdot \left| |\nabla y| - 1 \right| d\nu_t dt + \int_{-\epsilon^{-1}}^{-\epsilon} \int_M |Rc(\nabla y)|d\nu_t dt +\Psi(\delta|Y,\epsilon) \\ &\leq \Psi(\delta |Y,\epsilon).
\end{align*}
\end{proof}

Next, we prove an elementary lemma which will allow us to form almost splitting maps using a linear combination of almost splitting maps along with the new almost-splitting maps constructed in Lemma \ref{meat}.

\begin{lem} \label{vectorspace} Given $N,k\in \mathbb{N}$, there exists $C=C(N)<\infty$ such that the following holds. Suppose $(\mathcal{V},\langle \cdot, \cdot \rangle )$ is a real inner product space of dimension at most $N$, and let $J$ be a complex structure on $\mathcal{V}$ compatible with the inner product: $\langle Jv,Jw\rangle =\langle v,w\rangle$ for all $v,w\in \mathcal{V}$. If $v_1,...,v_{2k+1}\in \mathcal{V}$ are orthonormal, then there are $(a_{ij})_{1\leq i\leq 2k+2}^{1\leq j \leq 2k+1}$ and $(b_{ij})_{1\leq i\leq 2k+2}^{1\leq j\leq 2k+1}$ with $|a_{ij}|+|b_{ij}|\leq C$ such that 
$$\widetilde{v}_i := \sum_{j=1}^{2k+1} \left(a_{ij}v_j + b_{ij}Jv_j\right), \hspace{6 mm} 1\leq i \leq 2k+2,$$
are orthonormal.
\end{lem}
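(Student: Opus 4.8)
The plan is to avoid the naive approach of applying Gram--Schmidt to a maximal linearly independent subset of $\{v_1,\dots,v_{2k+1},Jv_1,\dots,Jv_{2k+1}\}$, whose coefficient bounds would degenerate with the number of vectors (hence with $N$). Instead I would keep the given orthonormal system $v_1,\dots,v_{2k+1}$ untouched and adjoin a single unit vector lying in $\mathrm{span}\{Jv_1,\dots,Jv_{2k+1}\}$ which is orthogonal to all of the $v_i$. The coefficients of such a vector in the basis $\{Jv_j\}$ then automatically have absolute value at most $1$, so one can take $C=1$, independently of $N$.

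First I would record two elementary facts. Since $J$ is compatible with $\langle\cdot,\cdot\rangle$ and $J^2=-\mathrm{id}$, the vectors $Jv_1,\dots,Jv_{2k+1}$ are again orthonormal, and the matrix $M\in\R^{(2k+1)\times(2k+1)}$ with entries $M_{ij}:=\langle Jv_i,v_j\rangle$ is skew-symmetric, because $\langle Jv_i,v_j\rangle=\langle J(Jv_i),Jv_j\rangle=-\langle v_i,Jv_j\rangle=-\langle Jv_j,v_i\rangle$. The key observation is that a real skew-symmetric matrix of odd order $2k+1$ is singular: $\det M=\det M^T=(-1)^{2k+1}\det M=-\det M$, so $\det M=0$ and $\ker M\neq\{0\}$.

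Given this, I would choose $\beta=(\beta_1,\dots,\beta_{2k+1})\in\ker M$ with $|\beta|=1$ and set
$$\widetilde v_i:=v_i \quad (1\le i\le 2k+1), \qquad \widetilde v_{2k+2}:=\sum_{j=1}^{2k+1}\beta_j Jv_j.$$
Then $\widetilde v_1,\dots,\widetilde v_{2k+1}$ are orthonormal by hypothesis; one has $|\widetilde v_{2k+2}|^2=\sum_{j,\ell}\beta_j\beta_\ell\langle Jv_j,Jv_\ell\rangle=|\beta|^2=1$; and for $1\le i\le 2k+1$ one computes $\langle\widetilde v_{2k+2},v_i\rangle=\sum_j\beta_j M_{ji}=(M^T\beta)_i=-(M\beta)_i=0$. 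Hence $\widetilde v_1,\dots,\widetilde v_{2k+2}$ are orthonormal, and they have the required form with $a_{ij}=\delta_{ij}$, $b_{ij}=0$ for $i\le 2k+1$, and $a_{2k+2,j}=0$, $b_{2k+2,j}=\beta_j$; since $|\beta_j|\le|\beta|=1$, every $|a_{ij}|+|b_{ij}|\le 1$, so $C=1$ suffices.

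There is essentially no serious obstacle here once one spots the singularity of odd-order skew-symmetric matrices; the only thing to be careful about is the choice of which linear combination to adjoin so that the coefficient bound is uniform, and the construction above makes this transparent. (One could also phrase this more invariantly: $W:=\mathrm{span}\{v_j,Jv_j\}$ is $J$-invariant, hence of even dimension, and therefore properly contains $\mathrm{span}\{v_j\}$; the point of the matrix computation is merely to produce the extra unit vector with controlled coordinates.)
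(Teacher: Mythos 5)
Your proof is correct, and it takes a genuinely different and in fact sharper route than the paper's. The paper argues by contradiction and compactness: it reduces to showing there is some $c(n)>0$ such that, for any orthonormal $(v_1,\dots,v_{2k+1})$, at least one of the orthogonal projections $w_i:=Jv_i-\sum_j\langle Jv_i,v_j\rangle v_j$ satisfies $|w_i|\geq c(n)$, and it gets this lower bound by passing to a convergent subsequence of putative counterexamples and noting that an orthonormal tuple whose $J$-images all lie in its own span would generate an odd-dimensional complex subspace. That argument is soft and gives a constant depending on the ambient dimension $N$. Your argument instead notices that the Gram matrix $M_{ij}=\langle Jv_i,v_j\rangle$ is real skew-symmetric of odd order $2k+1$, hence singular, so any unit vector $\beta\in\ker M$ produces $\widetilde v_{2k+2}=\sum_j\beta_j Jv_j$ directly orthogonal to all the $v_i$ and of unit length; keeping $\widetilde v_i=v_i$ for $i\leq 2k+1$ then yields $C=1$, independent of $N$. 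This is more explicit, avoids the compactness detour, and strengthens the lemma's conclusion. One cosmetic remark: the lemma as stated allows $C$ to depend on $N$, so your uniform bound is a bonus rather than a requirement, but since later the lemma is applied with $\mathcal V\subseteq L^2$-span of the $\nabla y_j, J\nabla y_j$ (so $N=4k+2$), the distinction is immaterial to the paper's use.
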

\begin{proof} It suffices to show the existence of $c(n)>0$ such that for any orthonormal tuple $(v_1,...,v_{2k+1})$ in $\mathcal{V}$, there exists $i\in \{1,...,2k+1\}$ such that
$$w_i:= Jv_i -\sum_{j=1}^{2k+1}\langle Jv_i,v_j \rangle v_j$$
satisfies $|w_i|\geq c(n)$. Suppose by way of contradiction there exist orthonormal tuples $(v_1^{\alpha} ,...,v_{2k+1}^{\alpha})_{\alpha \in \mathbb{N}}$ such that $\max_{1\leq i\leq 2k+1}|w_i^{\alpha}| \to 0$ as $\alpha\to \infty$. We can pass to subsequences so that $\lim_{\alpha \to \infty} v_i^{\alpha}=v_i^{\infty}$, where $(v_1^{\infty},...,v_{2k+1}^{\infty})$ is an orthonormal tuple. Then, for each $i\in \{1,...,2k+1\}$, we have
$$0=\lim_{\alpha \to \infty} w_i^{\alpha} = Jv_i^{\infty} -\sum_{j=1}^{2k+1}\langle Jv_i^{\infty} ,v_j^{\infty} \rangle v_j^{\infty}.$$
That is, $Jv_1,...,Jv_{2k+1}$ are in the $\mathbb{R}$-linear span of $v_1,...,v_{2k+1}$. This means that the $\mathbb{R}$-linear span of $v_1,...,v_{2k+1}$, equipped with the restriction of the complex structure, is a complex vector space of real dimension $2k+1$, a contradiction. 
\end{proof}

We finally establish the improved splitting for K\"ahler-Ricci flows.

\begin{prop} \label{quantthm1} For any $\epsilon>0$, $k\in \{0,...,n\}$, $Y<\infty$, the following holds whenever $\delta \leq \overline{\delta}(\epsilon,Y)$. Suppose $(M^{2n},(g_t)_{t\in I})$ is a K\"ahler-Ricci flow with $\mathcal{N}_{x_0,t_0}(r^2)\geq -Y$ for some $r>0$. If $(x_0,t_0)\in M\times I$ is $(\delta,r)$-selfsimilar and strongly $(2k+1,\delta,r)$-split, then $(x_0,t_0)$ is weakly $(2k+2,\epsilon,r)$-split. 
\end{prop}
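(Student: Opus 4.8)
The plan is to argue by contradiction, extracting in the limit a K\"ahler metric soliton that splits $\mathbb{R}^{2k+1}$ --- on which the elementary identity $\nabla z = J\nabla y$ of the introduction holds exactly --- and then transferring the functions so constructed back to the approximating flows by means of Propositions~\ref{strongsoliton} and \ref{meat}. So suppose the statement fails: there are $\epsilon>0$, $Y<\infty$, $k$, a sequence $\delta_i\searrow 0$, and K\"ahler-Ricci flows $(M_i^{2n},(g_{i,t})_{t\in I_i})$ with points $(x_i,t_i)$, $\mathcal{N}_{x_i,t_i}(r_i^2)\geq -Y$, which are $(\delta_i,r_i)$-selfsimilar and strongly $(2k+1,\delta_i,r_i)$-split but not weakly $(2k+2,\epsilon,r_i)$-split; after translating and rescaling I may take $r_i=1$, $t_i=0$. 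Write $d\nu^i=(4\pi\tau)^{-n/2}e^{-f^i}dg_i$ for the conjugate heat kernel based at $(x_i,0)$ and let $y^i=(y^i_1,\dots,y^i_{2k+1})$ be the strong splitting maps. Choosing $\epsilon_i\searrow 0$ slowly enough that $\delta_i\leq\overline{\delta}(\epsilon_i,Y)$, Proposition~\ref{strongsoliton} produces strong $(\epsilon_i,1)$-soliton potentials $h^i$ with $\int_{-\epsilon_i^{-1}}^{-\epsilon_i}\int_{M_i}|\nabla(h^i-f^i)|^2 d\nu^i_t dt+\sup_t\int_{M_i}(h^i-f^i)^2 d\nu^i_t\leq\epsilon_i$. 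Since each component $y^i_j$ is a strong $(1,\delta_i,1)$-splitting map, Proposition~\ref{meat} applies (with common small parameter $\max(\delta_i,\epsilon_i)\to0$): setting $q^i:=4\tau(h^i-W_i)$ and $z^i_j:=\frac{1}{2}\langle\nabla q^i,J_i\nabla y^i_j\rangle$, we obtain on $[-\epsilon^{-1},-\epsilon]$, with $\Psi_i\to 0$,
\[
\int_{-\epsilon^{-1}}^{-\epsilon}\!\!\int_{M_i}|\square z^i_j|\,d\nu^i_t\,dt+\int_{-\epsilon^{-1}}^{-\epsilon}\!\!\int_{M_i}|\nabla z^i_j-J_i\nabla y^i_j|^2\,d\nu^i_t\,dt\leq\Psi_i,
\]
together with the uniform bounds $\int_{-\epsilon^{-1}}^{-\epsilon}\int_{M_i}(|\nabla y^i_j|^4+|\nabla z^i_j|^2)\,d\nu^i_t\,dt\leq C(Y,\epsilon)$ (the first from Proposition 12.21 of \cite{bamlergen3}).

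Next I would pass to a subsequence so that $(M_i,(g_{i,t}),(\nu^i_t))\xrightarrow[i\to\infty]{\mathbb{F}}(\mathcal{X},(\mu_t))$ with smooth convergence on the regular part $\mathcal{R}$ via diffeomorphisms $\psi_i$ as in Theorem~\ref{bamconvergence}. By Theorem 15.50 of \cite{bamlergen3} there is a splitting $\mathcal{R}\cong\mathcal{R}'\times\mathbb{R}^{2k+1}$ with $\psi_i^*y^i_j\to y_{\infty,j}$ the $j$-th Euclidean coordinate, and by Theorem~\ref{bamconvergence} also $\psi_i^*g_i\to g_\infty$, $\psi_i^*J_i\to J_\infty$ in $C^\infty_{loc}(\mathcal{R})$. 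Since $\nabla y_{\infty,j}$ and $J_\infty$ are parallel on each (connected) time slice of $\mathcal{R}$ and the configuration is self-similar in time, the functions $\langle\nabla y_{\infty,j},J_\infty\nabla y_{\infty,l}\rangle\equiv c_{jl}$ are constant, with $c_{jj}=0$ and $c_{lj}=-c_{jl}$ by $g_\infty$-compatibility of $J_\infty$ (this can also be seen directly from the pointwise bound $|\square\langle\nabla y^i_j,J_i\nabla y^i_l\rangle|\leq C|\nabla^2y^i_j||\nabla^2y^i_l|$, which follows from $\square\nabla y^i_j=-Rc(\nabla y^i_j)$ and the K\"ahler identity $Rc\circ J=J\circ Rc$, together with the $L^1$-Poincar\'e inequality). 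By the standard compact-exhaustion argument on $\mathcal{R}$ (as in Lemma~\ref{tech} and Claim 2 of the proof of Proposition~\ref{getsplit}), using the uniform $L^4$ bound for $\nabla y^i_j$, we get $\int_{-\epsilon^{-1}}^{-\epsilon}\int_{M_i}|\langle\nabla y^i_j,J_i\nabla y^i_l\rangle-c_{jl}|\,d\nu^i_t\,dt\to 0$, while $\int_{-\epsilon^{-1}}^{-\epsilon}\int_{M_i}|\langle\nabla y^i_j,\nabla y^i_l\rangle-\delta_{jl}|\,d\nu^i_t\,dt\leq\delta_i\to 0$ by definition of a strong splitting map.

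Finally, I would feed the linear algebra in. Applying Lemma~\ref{vectorspace} to the orthonormal frame $\nabla y_{\infty,1},\dots,\nabla y_{\infty,2k+1}$ in a fiber of $T\mathcal{R}$ with the $g_\infty$-compatible complex structure $J_\infty$ yields constants $(a_{lj}),(b_{lj})$, $1\leq l\leq 2k+2$, $1\leq j\leq 2k+1$, with $|a_{lj}|+|b_{lj}|\leq C(n)$, such that $\widetilde v_l:=\sum_j(a_{lj}\nabla y_{\infty,j}+b_{lj}J_\infty\nabla y_{\infty,j})$ are orthonormal; expanding with $\langle\nabla y_{\infty,j},\nabla y_{\infty,p}\rangle=\langle J_\infty\nabla y_{\infty,j},J_\infty\nabla y_{\infty,p}\rangle=\delta_{jp}$, $\langle\nabla y_{\infty,j},J_\infty\nabla y_{\infty,p}\rangle=c_{jp}$, this reads $\sum_{j,p}\big(a_{lj}a_{mp}\delta_{jp}+a_{lj}b_{mp}c_{jp}+b_{lj}a_{mp}c_{pj}+b_{lj}b_{mp}\delta_{jp}\big)=\delta_{lm}$. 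Using these \emph{fixed} coefficients, define on $M_i$ the functions $\widetilde y^i_l:=\sum_j(a_{lj}y^i_j+b_{lj}z^i_j)$. Since $\square y^i_j=0$ we have $\square\widetilde y^i_l=\sum_j b_{lj}\square z^i_j$, so $\int_{-\epsilon^{-1}}^{-\epsilon}\int_{M_i}|\square\widetilde y^i_l|\,d\nu^i_t\,dt\to 0$. Writing $\widehat v^i_l:=\sum_j(a_{lj}\nabla y^i_j+b_{lj}J_i\nabla y^i_j)$, we have $\int_{-\epsilon^{-1}}^{-\epsilon}\int_{M_i}|\nabla\widetilde y^i_l-\widehat v^i_l|^2\,d\nu^i_t\,dt\leq C\sum_j\int_{-\epsilon^{-1}}^{-\epsilon}\int_{M_i}|\nabla z^i_j-J_i\nabla y^i_j|^2\,d\nu^i_t\,dt\to0$, and then, using $\langle J_iX,J_iY\rangle=\langle X,Y\rangle$, $\langle J_iX,Y\rangle=-\langle X,J_iY\rangle$, and the two $L^1$-convergences above, $\int_{-\epsilon^{-1}}^{-\epsilon}\int_{M_i}|\langle\widehat v^i_l,\widehat v^i_m\rangle-\delta_{lm}|\,d\nu^i_t\,dt\to0$, since the $L^1$-limit of $\langle\widehat v^i_l,\widehat v^i_m\rangle$ is exactly the expansion displayed above, hence $\delta_{lm}$. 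Therefore $\int_{-\epsilon^{-1}}^{-\epsilon}\int_{M_i}|\langle\nabla\widetilde y^i_l,\nabla\widetilde y^i_m\rangle-\delta_{lm}|\,d\nu^i_t\,dt\to0$, so $(\widetilde y^i_1,\dots,\widetilde y^i_{2k+2})$ is a weak $(2k+2,\epsilon,1)$-splitting map at $(x_i,0)$ for all large $i$, contradicting the choice of the sequence. The one genuinely substantial ingredient is estimate $(v)$ of Proposition~\ref{meat}, where the parabolic regularization $h$ and Lemma~\ref{tech} are needed to control $\nabla z-J\nabla y$ in $L^2$ in the absence of an $L^2$ bound on $Rc(\nabla y)$; granting that, the argument above is the linear algebra of Lemma~\ref{vectorspace} together with standard limiting arguments.
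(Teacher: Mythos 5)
Your proof is correct and uses the same key ingredients as the paper (Proposition~\ref{strongsoliton}, Proposition~\ref{meat}, Lemma~\ref{vectorspace}), but the overall architecture differs in two genuine ways. First, you argue by contradiction, passing to an $\mathbb{F}$-limit and invoking Theorem 15.50 of \cite{bamlergen3} to get exact constants $c_{jl}=\langle\nabla y_{\infty,j},J_\infty\nabla y_{\infty,l}\rangle$, whereas the paper argues directly on a single flow without ever extracting a limit. Second --- and this is the more substantive point --- the two treatments of Lemma~\ref{vectorspace} are not the same: the paper applies it to the abstract real vector space $\mathcal{V}\subset\mathfrak{X}(M\times[-\epsilon^{-1},-\epsilon])$ spanned by $\nabla y_j,J\nabla y_j$ equipped with the spacetime $L^2$ inner product, while you apply it fiberwise on the limit spacetime, where the inner products are exactly the constants $c_{jl}$. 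Either version requires knowing that $\langle\nabla y_p,J\nabla y_q\rangle$ is close in $L^1$ (not merely in $L^2$-average) to a constant; the paper's phrase ``It follows that $\widetilde y$ is a weak splitting map'' compresses this step, whereas you spell it out explicitly via the compact-exhaustion argument (with the parenthetical alternative via $\square\langle\nabla y_j,J\nabla y_l\rangle=-2\langle\nabla^2 y_j,J\nabla^2 y_l\rangle$ and the $L^1$-Poincar\'e inequality, both of which are correct). The trade-off is standard: your compactness argument is cleaner to state but yields a non-effective $\overline{\delta}(\epsilon,Y)$, while the paper's direct route would give effective constants if all the estimates were tracked. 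One minor slip: the sign in $\square\nabla y_j=-Rc(\nabla y_j)$ should be $+Rc(\nabla y_j)$ for $\square=\partial_t-\Delta$ with $\square y_j=0$; this does not affect the conclusion since the Ricci terms cancel in $\square\langle\nabla y_j,J\nabla y_l\rangle$ regardless.
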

\begin{proof} By parabolic rescaling and time translation, we can assume $t_0=0$ and $r=1$. For ease of notation, write $\nu_t := \nu_{x_0,0;t}$. Let $y:M\times [-\delta^{-1},\delta]\to \mathbb{R}^{2k+1}$ be a strong $(2k+1,\delta,1)$-splitting map, and let $q\in C^{\infty}(M\times [-\delta^{-1},-\delta])$ be a strong $(\delta,1)$-soliton potential, both based at $(x_0,0)$. For each $i \in \{1,...,2k+1\}$, Proposition \ref{meat} states that the functions $z_i := \frac{1}{2}\langle \nabla q , J\nabla y_i \rangle$ are weak $(1,\Psi(\delta|Y),1)$-splitting maps based at $(x_0,0)$ which satisfy
$$\int_{-\epsilon^{-1}}^{-\epsilon}\int_M |\nabla z_i -J\nabla y_i|^2 d\nu_t dt \leq \Psi(\delta|Y,\epsilon).$$ 
By replacing $y$ with $A\circ y+b$ for some $A\in \mathbb{R}^{(2k+1)\times (2k+1)}$ with $|A-I_{2k+1}|\leq \Psi(\delta|Y,\epsilon)$ and $|b|\leq \Psi(\delta|Y,\epsilon)$, we can assume that
$$\int_{-\epsilon^{-1}}^{-\epsilon}\int_M \langle \nabla y_i ,\nabla y_j \rangle d\nu_t dt =\delta_{ij}.$$
Similar to the proof of Proposition 10.8 in \cite{bamlergen3}, we consider the finite-dimensional real vector space $\mathcal{V}$ spanned by $\nabla y_i,J\nabla y_i \in \mathfrak{X}(M\times [-\epsilon^{-1},-\epsilon])$, equipped with the restricted $L^2$ inner product
$$\langle X_1 , X_2 \rangle_{L^2} = \int_{-\epsilon^{-1}}^{-\epsilon}\int_M \langle X_1 ,X_2 \rangle d\nu_t dt$$
and the obvious complex structure. Lemma \ref{vectorspace} then provides $a_{ij},b_{ij}$, where $1\leq i \leq 2k+2$, $1\leq j\leq 2k+1$, such that
$$\widetilde{V}_i := \sum_{j=1}^{2k+1}a_{ij}\nabla y_j +b_{ij}J\nabla y_j$$
are orthonormal in $\mathcal{V}$, and $|a_{ij}|,|b_{ij}|\leq C(k)$. It follows that
$$\widetilde{y}_i :=\sum_{j=1}^{2k+1}a_{ij} y_j +b_{ij}z_j$$
define a weak $(2k+2,\Psi(\delta|Y,\epsilon),1)$-splitting map $\widetilde{y}=(\widetilde{y}_1,...,\widetilde{y}_{2k+2})$ based at $(x_0,0)$.
\end{proof}

\begin{proof}[Proof of Theorem \ref{mainthm2}] We first verify the existence of a lower bound of the Nash entropy on all of $P^{\ast}(x_{\infty},A,-A^2)$. Given $y\in P^{\ast}(x_{\infty},A,-A^2)$, we can find a sequence $(y_i,t_i)\in M_i \times (-T_i,0]$ such that $(y_i,t_i)\xrightarrow[i\to\infty]{\mathfrak{C}} y_{\infty}$. Lemma 15.8 of \cite{bamlergen3} implies that $(y_i,t_i)\in P^{\ast}(x_i,2A,-(2A)^2)$, and Bamler's Nash entropy oscillation estimate (Corollary 5.11 in \cite{bamlergen1}) then gives $\mathcal{N}_{y_i,t_i}(1) \geq -Y'(Y,A)$. Taking the limit as $i\to \infty$, we obtain (via Theorem 15.45 in \cite{bamlergen3}) $\mathcal{N}_y(1)\geq -Y'(Y,A)$. The inclusion 
$$\widehat{\mathcal{S}}_{r_1,r_2}^{\epsilon,2j+1} \cap P^{\ast}(x_{\infty};A,-A^2) \subseteq \widehat{\mathcal{S}}_{r_1,r_2}^{\delta(\epsilon,Y,A),2j} \cap P^{\ast}(x_{\infty};A,-A^2)$$
thus follows from Proposition \ref{quantthm1}. The remaining claim is then a consequence of the inclusions between quantitative strata and weak quantitative strata (Lemma 20.3 of \cite{bamlergen3} and Proposition \ref{equiv}).
\end{proof}

\begin{proof}[Proof of Theorem \ref{mainthm1}] Taking $r_1=0$ and $r_2=\epsilon$ in Theorem \ref{mainthm2} gives
$$\widehat{\mathcal{S}}_{0,\epsilon}^{\epsilon,2j+1} \cap P^{\ast}(x_{\infty},A,-A^2) \subseteq \widehat{\mathcal{S}}_{0,\epsilon}^{\delta(\epsilon,Y,A),2j}\cap P^{\ast}(x_{\infty};A,-A^2).$$
Then taking the union over $\epsilon>0$ gives
$$\mathcal{S}^{2j+1}\cap P^{\ast}(x_{\infty},A,-A^2) \subseteq \mathcal{S}^{2j+2} \cap P^{\ast}(x_{\infty},A,-A^2).$$
Finally, taking $A\nearrow \infty$ gives the claim.
\end{proof}

\section{An Isometric Action on Tangent Flows}

Suppose $(M,g,J,f)$ is a (not necessarily complete) shrinking gradient K\"ahler-Ricci soliton, and let $\omega:=g(J\cdot,\cdot)$ be the corresponding K\"ahler form. Then the Ricci soliton equation gives
\begin{align*} (\mathcal{L}_{J\nabla f}J)(W)&=	\mathcal{L}_{J\nabla f}(JW)-J([J\nabla f,W])
=	\nabla_{J\nabla f}(JW)-\nabla_{JW}(J\nabla f)-J\left(\nabla_{J\nabla f}W-\nabla_{W}(J\nabla f)\right)
\\&=	-J\nabla_{JW}\nabla f-\nabla_{W}\nabla f= JRc(JW)+Rc(W)-\frac{1}{2}(J^2 W+W)=0 \end{align*}
for any vector field $W\in \mathfrak{X}(\mathcal{R})$, and
\[
\mathcal{L}_{J\nabla f}\omega=di_{J\nabla f}\omega=d\left(\omega(J\nabla f,\cdot)\right)=-d\left(g(\nabla f,\cdot)\right)=-d(df)=0,
\]
so that $J\nabla f$ is a real holomorphic Killing vector field on $M$.  We now prove the completeness of the flow of this vector field for tangent flows.

\begin{prop} \label{action}
Suppose $(M_{i}^{2n},(g_{i,t})_{t\in[-\epsilon_{i}^{-1},0]})$ are closed K\"ahler-Ricci
flows, and that $(x_{i},0)$ are $(\epsilon_{i},1)$-selfsimilar,
where $\epsilon_{i}\searrow0$.
Assume 
\[
(M,(g_{i,t})_{t\in(-\epsilon_i^{-1},0)},(\nu_{x_i,0;t})_{t\in[-\epsilon_i^{-1},0)})\xrightarrow[i\to\infty]{\mathbb{F}}(\mathcal{X},(\nu_{x_{\infty};t})_{t\in(-\infty,0)})
\]
on compact time intervals
where $\mathcal{X}$ is a metric soliton modeled on a singular shrinking
K\"ahler-Ricci soliton $(X,d,\mathcal{R}_X,g_X,f_X)$ as in Theorem \ref{tangentflows}. Set $q:=4\tau (f_X-W) \in C^{\infty}(\mathcal{R})$, where $\mathcal{R}\subseteq \mathcal{X}$ is the regular part of the metric flow. Then $J\nabla q$ is
complete, and the heat kernel satisfies the following infinitesimal
symmetry for all $(x_{1},x_{0})\in\mathcal{R}\times\mathcal{R}$ with
$\mathfrak{t}(x_{0})<\mathfrak{t}(x_{1})$:

\[
\langle\nabla_{x_{1}}K(x_{1};x_{0}),J\nabla q(x_1) \rangle+\langle\nabla_{x_{0}}K(x_{1};x_{0}),J\nabla q(x_0)\rangle=0.
\]
Moreover, the flow of $J\nabla q$ extends to a 1-parameter action by isometries on all of $X$. 
\end{prop}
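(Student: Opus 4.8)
The plan is to construct the vector field $J\nabla q$ on the regular part $\mathcal{R}\subseteq\mathcal{X}$, establish the infinitesimal heat kernel symmetry, and then use that symmetry to show the flow of $J\nabla q$ is complete and extends to isometries of the full singular space $X$. First I would note that since $\mathcal{X}$ is a metric soliton modeled on the singular shrinking K\"ahler GRS $(X,d,\mathcal{R}_X,g_X,f_X)$, by Theorem \ref{tangentflows} the spacetime $\mathcal{R}$ is identified with $\mathcal{R}_X\times(-\infty,0)$ with metric $|t|g_X$, time vector field $\partial_t-\nabla f_X$, and $f(\cdot,t)$ corresponding to $f_X$; the function $q=4\tau(f_X-W)$ is then smooth on $\mathcal{R}$, and the computation in the preamble of Section 6 shows $J\nabla q$ is a real holomorphic Killing field on each time slice $(\mathcal{R}_X,g_X)$ (up to the $\tau$-scaling, which only reparametrizes). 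Because $\mathcal{L}_{\partial_{\mathfrak{t}}}J=0$ and $J\nabla f_X$ is $\partial_{\mathfrak{t}}$-invariant by the soliton structure, $J\nabla q$ descends to a well-defined vector field on $\mathcal{R}$ commuting with $\mathfrak{t}$.

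Next I would establish the infinitesimal symmetry of the conjugate heat kernel. The idea is that if $X$ generates a flow, invariance of the whole Ricci flow spacetime structure (the metric and the conjugate heat kernel $v$ of the metric soliton, which is a function of $f_X$) forces $K$ to be invariant under the diagonal action. Concretely, I would differentiate the reproduction formula, or equivalently verify that the function $(x_1,x_0)\mapsto \langle\nabla_{x_1}K, J\nabla q(x_1)\rangle + \langle\nabla_{x_0}K,J\nabla q(x_0)\rangle$ satisfies a conjugate heat equation in $x_0$ (and heat equation in $x_1$) with zero initial data, using that $J\nabla q$ is Killing, holomorphic, and commutes with $\partial_{\mathfrak{t}}$, together with the soliton identity $Rc(g_X)+\nabla^2 f_X = \tfrac{1}{2\tau}g_X$ restricted to $\mathcal{R}$. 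This is essentially the statement that $\mathcal{L}_{J\nabla q}$ annihilates the heat kernel operator because it preserves both $g$ and $f$; the almost-selfsimilar estimates (\ref{spacetime}), (\ref{timeslice}) and Proposition \ref{stronger} provide the integrability needed to justify the integration by parts on the noncompact, possibly incomplete $\mathcal{R}$.

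For completeness of $J\nabla q$ and extension to $X$: the integral curves of $J\nabla q$ lie in single time slices and preserve $g_X$, hence preserve $d$; the only way the flow could fail to be complete is if an integral curve ran into the singular set $\mathcal{S}_X = X\setminus\mathcal{R}_X$ in finite time. I would rule this out using that $J\nabla q$ is Killing — so $|J\nabla q|$ is constant along its own integral curves — hence each integral curve has bounded speed and, being contained in a fixed level set, stays in a compact subset of $\mathcal{R}_X$ for all finite time (here one invokes the singular-space volume and the fact that Killing fields on the regular part of such singular spaces extend; alternatively one uses the heat kernel symmetry to transport the flow). Then the flow $(\sigma_s)$ on $\mathcal{R}_X$ consists of isometries, and since $(X,d)$ is the metric completion of $(\mathcal{R}_X, d_{g_X})$, each $\sigma_s$ extends uniquely to an isometry of $(X,d)$, giving the desired $1$-parameter action.

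The main obstacle I expect is controlling the behavior near the singular set: establishing the integrability/decay needed to integrate by parts in the heat-kernel symmetry argument on the incomplete manifold $\mathcal{R}$, and ruling out integral curves of $J\nabla q$ escaping to $\mathcal{S}_X$ in finite time. The weighted $L^p$ estimates of the form (\ref{spacetime})--(\ref{timeslice}) and Proposition \ref{stronger}, together with the cutoff functions $\eta_r$ of Lemma \ref{cutoff} that vanish near the singular set, should be the key tools: one runs the differential identity against $\eta_r$, controls the error terms by the weighted estimates, and lets $r\to 0$. The completeness statement, once the symmetry is known, should follow more softly, since the heat kernel symmetry effectively says the flow is already defined on a dense set of "test measures" and must therefore close up.
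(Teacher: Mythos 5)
Your outline of the infinitesimal symmetry is in the right spirit, but the paper's route is more concrete and avoids a circularity that your version would face: rather than differentiating the reproduction formula directly on the possibly incomplete $\mathcal{R}$ (where the flow of $J\nabla q$ is only partially defined, and one would need a priori control of its domain), the paper works on the closed approximating flows $M_i$, where the flow $\zeta_h^i$ of $J_i\nabla q_i$ is automatically complete. There one defines $u'^{,i}(\cdot,t_0,h)=u^i(\zeta_h^i(\cdot),t_0)$, evolves by $\square u''^{,i}=0$, and uses the K\"ahler identities plus the strong almost-soliton potential estimates to show that the commutator defect $\partial_h u''^{,i}-\langle\nabla u''^{,i},J_i\nabla q_i\rangle$ tends to zero in $L^1(d\nu^i)$; taking $C^\infty_{loc}$ limits and comparing the two expressions for $\partial_h|_{h=0}u''^{,\infty}$ then gives the symmetry. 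Your proposed integration-by-parts-with-cutoffs is roughly what is needed in spirit, but the paper does the integration on the smooth approximants where no cutoffs are needed in the symmetry step.

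The completeness argument is where you have a genuine gap. You argue that $|J\nabla q|$ is constant along its own integral curves and that, being contained in a level set of $f_X$ (or of $|J\nabla q|$), the curve stays in a compact subset of $\mathcal{R}_X$. But compactness in $X$ does not imply compactness in $\mathcal{R}_X$: a level set of $f_X$ is a compact subset of $X$, yet it can intersect the singular set $\mathcal{S}_X=X\setminus\mathcal{R}_X$, so an integral curve with bounded speed can perfectly well hit $\mathcal{S}_X$ in finite time. This is precisely the difficulty, and bounded speed alone does not resolve it. The paper instead invokes Bamler's criterion (Theorem 14.54(b) of \cite{bamlergen3}), which reduces completeness of the vector field on $\mathcal{R}$ to showing that the quantity $\int_{t_0}^{t_0'}\int_{\mathcal{R}_t} f_s e^{-f_s}\,dg_t\,dt$, where $K(\gamma(s);\cdot)=(4\pi\tau)^{-n/2}e^{-f_s}$, is constant in $s$ along the integral curve $\gamma$. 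Proving this conservation law is nontrivial: one uses the already-established infinitesimal heat-kernel symmetry, the cutoffs $\eta_r$ of Lemma \ref{cutoff} together with a further truncation $\overline{\eta}_\delta$, a claim controlling the supports via $P^*$-parabolic neighborhoods and the Gaussian heat-kernel bounds of Lemma 15.9 of \cite{bamlergen3}, and the smallness of $\{r_{Rm}<r\}$ to kill the boundary terms as $r\to 0$, then $\delta\searrow 0$. Your appeal to ``Killing fields on regular parts of singular spaces extend'' is not a proof, and your alternative of ``using the heat kernel symmetry to transport the flow'' is the right instinct but is missing the specific conserved quantity and the cutoff machinery that makes it rigorous. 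The final extension of $\phi_s$ to an isometry of $(X,d)$ by the length-space-plus-density argument is handled the same way in the paper.
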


\begin{rem}
This proof is modeled on Theorem 15.50 in \cite{bamlergen3}.
\end{rem}

\begin{proof}
Let $(U_{i})$ be a precompact exhaustion of $\mathcal{R},$with open
embeddings $\psi_{i}:U_{i}\to M_{i}\times(-\epsilon_i^{-1},0)$ realizing the
$\mathbb{F}$-convergence on the regular part as in Theorem \ref{bamconvergence}. By Proposition \ref{strongsoliton} and the proof of Theorem 15.69 in \cite{bamlergen3}, we can find almost-GRS potential functions $h_{i}\in C^{\infty}(M_{i}\times[-\epsilon_{i}^{-1},-\epsilon_{i}])$
such that, $h_{i}\circ\psi_{i}\to f_X$ in $C_{loc}^{\infty}(\mathcal{R})$, where we identify $\mathcal{R}\cong \mathcal{R}_X\times(-\infty,0)$ as in Theorem \ref{tangentflows}. Now
fix $t_{0}\in(-\infty,0)$ and $u^{\infty}\in C_{c}^{\infty}(\mathcal{R}_{t_{0}})$,
and let $u^{i}\in C^{\infty}(M_{i}\times\{t_{0}\})$ be an approximating
sequence: $u^{i}\circ\psi_{i}\to u^{\infty}$ in $C_{loc}^{\infty}(\mathcal{R})$.
Let $(\zeta_{h}^{i})_{h\in(-\alpha,\alpha)}$ be the flow of $J_{i}\nabla q_{i}$,
and define $u'^{,i}\in C^{\infty}(M_{i}\times\{t_{0}\}\times(-\alpha,\alpha))$
by $u'^{,i}(x,t_{0},h):=u^{i}(\zeta_{h}^{i}(x),t_{0})$, so that $u'^{,i}(\cdot,t_{0},0)=u^{i}(\cdot,t_{0})$
and $\partial_{h}u'^{,i}(x,t_{0},h)=\langle\nabla u^{i},J_{i}\nabla q_{i}\rangle(\zeta_{h}^{i}(x),t_{0})$.
Next, let $u''^{,i}\in C^{\infty}(M_{i}\times[t_{0},0]\times(-\alpha,\alpha))$
be given by $u''^{,i}(\cdot,t_{0},\cdot)=u'^{,i}$ and $\square u''^{,i}(\cdot,\cdot,h)=0$
for all $h\in(-\alpha,\alpha)$. Letting $\omega_i, \rho_i \in \Omega^2(M_i)$ denote the K\"ahler and Ricci 2-forms, respectively, we have $$\langle\nabla^{2}u''^{,i},g_{i}(J_{i}\cdot,\cdot)\rangle=\langle\nabla^{2}u''^{,i},\omega_{i}\rangle=0,$$
$$\langle \nabla^2 u''^{,i},Rc_{g_i}(J_i \cdot, \cdot)\rangle =\langle \nabla^2 u''^{,i} ,\rho_i \rangle =0,$$
so we can estimate
\begin{align*}
\square\left|\partial_{h}u''^{,i}-\langle\nabla u''^{,i},J_{i}\nabla q_{i}\rangle\right|\leq & 2\left|\langle\nabla^{2}u''^{,i},\nabla(J_{i}\nabla q_{i})\rangle\right|=2\left|\left\langle \nabla^{2}u''^{,i},\left(4\tau Rc(g_i)+\nabla^{2}q_{i}-2g_{i}\right)(J_{i}\cdot,\cdot)\right\rangle \right|\\
\leq & 2|\nabla^{2}u''^{,i}|\cdot|4\tau Rc(g_i)+\nabla^{2}q_{i}-2g_{i}|.
\end{align*}
Let $\nu^{i}:=\nu_{x_{i},0}$, and integrate $\square|\nabla u''^{,i}|^{2}=-2|\nabla^{2}u''^{,i}|^{2}$
against $\nu^{i}$ to obtain
\[
2\int_{t_{0}}^{t_{1}}\int_{M_i}|\nabla^{2}u''^{,i}|^{2}d\nu_{t}^{i}dt\leq\int_{M_i}|\nabla u'^{,i}|^{2}d\nu_{t_{0}}^i.
\]
However, we know that $u'^{,i}\circ\psi_{i}\to u'^{,\infty}$ in $C_{loc}^{\infty}(\mathcal{R}_{t_{0}})$,
where $u'^{,\infty}(x)=u^{\infty}(\zeta_{h}^{\infty}(x))$, and $(\zeta_{h}^{\infty})$
is the (partially defined) flow of $J\nabla q$. In particular, we
can estimate, for any $t_{1}\in(t_{0},0)$,
\begin{align*}
\sup_{t\in[t_{0},t_{1}]}\int_{M_i}\left|\partial_{h}u''^{,i}-\langle\nabla u''^{,i},J_{i}\nabla q_{i}\rangle\right|d\nu_{t}^i\leq & \left(\int_{t_{0}}^{t_{1}}\int_{M_i}|4\tau Rc(g_i )+\nabla^{2}q_{i}-2g_{i}|^{2}d\nu_{t}^{i}dt\right)^{\frac{1}{2}}\left(\int_{M_i}|\nabla u'^{,i}|^{2}d\nu_{t_{0}}^i\right)^{\frac{1}{2}}\\
\leq & \Psi(\epsilon_{i}|t_{0},t_{1}).
\end{align*}
Because $u''^{,i}\circ\psi_{i}\to u''^{,\infty}$ in $C_{loc}^{\infty}(\mathcal{R}_{[t_{0},0)})$,
where $u''^{,\infty}(x,h)=\int_{\mathcal{R}_{t_{0}}}K(x;y)u'(y,h)dg_{t_{0}}(y)$,
we obtain $\partial_{h}u''^{,\infty}=\langle\nabla u''^{,\infty},J\nabla q\rangle$.
We can therefore compute, for all $x\in\mathcal{R}_{t_{1}}$,
\begin{align*}
\partial_{h}|_{h=0}u''^{,\infty}(x,0)= & \left. \frac{\partial}{\partial h} \right|_{h=0}\int_{\mathcal{R}_{t_{0}}}K(x;y)u'(y,h)dg_{t_{0}}(y)=\int_{\mathcal{R}_{t_{0}}}K(x;y)\langle\nabla u,J\nabla q\rangle(y)dg_{t_{0}}(y)\\
= & -\int_{\mathcal{R}_{t_{0}}}u(y)\left(\langle\nabla_{y}K(x;y),J\nabla q(y)\rangle+ K(x;y)\text{div}(J\nabla q)(y)\right)dg_{t_{0}}(y)\\
= & -\int_{\mathcal{R}_{t_{0}}}u(y)\langle\nabla_{y}K(x;y),J\nabla q(y)\rangle dg_{t_{0}}(y)
\end{align*}
since 
$$\text{div}(J\nabla q)=\langle \nabla (J\nabla q),g\rangle =- \langle 2\omega-4\tau \rho,g \rangle =0.$$
On the other hand, we have
\[
\partial_{h}|_{h=0}u''^{,\infty}(x,0)=\langle\nabla u''^{,\infty}(x,0),J\nabla q(x)\rangle=\int_{\mathcal{R}_{t_{0}}}\langle\nabla_{x}K(x;y),J\nabla q(x)\rangle u(y)dg_{t_{0}}(y),
\]
and the infinitesimal symmetry follows.

By Theorem 15.45$(c)$ in \cite{bamlergen3}, any $x\in\mathcal{R}_{t_{1}}$ satisfies $\lim_{\tau'\searrow0}\frac{2}{\tau'}\int_{\frac{\tau'}{2}}^{\tau'}\mathcal{N}_{x}(\tau'')d\tau''=0$.
Suppose $\gamma:I^{\ast}\to\mathcal{R}_{t_{1}}$ is an integral curve
of $J\nabla q$, and fix $\tau'>0$ sufficiently small so that if
$t_{0}:=t_{1}-\tau'$ and $t_{0}':=t_{1}-\frac{1}{2}\tau'$, then
there exists $x_{0}\in\mathcal{R}_{t_{0}}$ which exists until time
$t_{0}'$. Write $K(\gamma(s);\cdot)=(4\pi\tau)^{-\frac{n}{2}}e^{-f_{s}}$,
where $f_s\in C^{\infty}(\mathcal{R}_{[t_{0},t_{0}']})$, $s\in I^{\ast}$.
By Theorem 14.54$(b)$ of \cite{bamlergen3}, the completeness of
$J\nabla q$ will follow from showing the following identity:
\[
\frac{d}{ds}\int_{t_{0}}^{t_{0}'}\int_{\mathcal{R}_{t}}f_{s}e^{-f_{s}}dg_{t}dt=0.
\]
For $r>0$, let $\eta_{r}\in C^{\infty}(\mathcal{R})$ be the cutoff
functions from Lemma \ref{cutoff}. Fix $\delta>0$
and a cutoff $\overline{\eta}_{\delta}\in C^{\infty}([0,\infty))$
with $\overline{\eta}_{\delta}|[0,\delta]\equiv0$ and $\overline{\eta}_{\delta}(a)=a$
for all $a\in[2\delta,\infty)$.\\

\noindent \textbf{Claim:} There exists $A=A(\delta)<\infty$ such that $\text{supp}(\overline{\eta}_{\delta}\circ e^{-f_s})\subseteq P^{\ast}(x_{\infty},A,-A^2)$ for all $s\in I^{\ast}\cap [-\sigma^{-1},\sigma^{-1}]$.

We recall the following Gaussian estimate for the conjugate heat kernel on $\mathcal{X}$ (Lemma 15.9 of \cite{bamlergen3}):
$$e^{-f(y)}\leq C(T) \exp\left( -\frac{\left(d_{W_1}^{\mathcal{X}_{\mathfrak{t}(y)}}(\nu_{x_{\infty};t_0},\delta_y)\right)^2}{10\mathfrak{t}(y)}\right)$$
for all $y\in \mathcal{R}_{[-T,0)}$. We let $y=\gamma(s)$, and observe that $s\mapsto f(\gamma(s))$ is constant, so there exists $\Lambda \in (|t_0|^{\frac{1}{2}},\infty)$ such that 
$$d_{W_1}^{\mathcal{X}_{t_1}}(\nu_{x_{\infty};t_0},\delta_{\gamma(s)}) \leq \Lambda$$
for all $s\in I^{\ast}$. This implies $\gamma(I^{\ast})\subseteq P^{\ast}(x_{\infty},\Lambda,-\Lambda^2)$. We may therefore apply Lemma 15.9 of \cite{bamlergen3} to conclude
$$f_s(y)\geq -C + \frac{1}{10 \tau'}\left( d_{W_1}^{\mathcal{X}_t}(\nu_{\gamma(s);t},\delta_y)\right)^2$$
for any $t\in [t_0,t_0']$ and $y\in \mathcal{R}_t$. Thus, there exists $A'=A'(\delta)<\infty$ such that 
$$\text{supp}(\overline{\eta}_{\delta}\circ e^{-f_s})\subseteq P^{\ast}(\gamma(s),A',-(A')^2)$$
for all $s\in I^{\ast}\cap [-\sigma^{-1},\sigma^{-1}]$. The Claim then follows from Proposition 3.40 of \cite{bamlergen2}, which describes inclusion properties of $P^{\ast}$-parabolic neighborhoods. $\square$

By the Claim and Lemma 2.11$(iv)$, we see that $$\bigcup_{s\in [-\sigma^{-1},\sigma^{-1}]} \text{supp}((\overline{\eta}_{\delta}\circ e^{-f_s})\eta_r) \cap \mathcal{R}_{[t_0,t_0']}$$
is relatively compact in $\mathcal{R}_{[t_0,t_0']}$ for any fixed $\delta, r>0$. Thus, for any $s_{1},s_{2}\in I^{\ast}$, the infinitesimal symmetry of $K$ gives
\begin{align} \nonumber
\left.\int_{t_{0}}^{t_{0}'}\int_{\mathcal{R}_{t}}f_{s}(\overline{\eta}_{\delta}\circ e^{-f_{s}})\eta_{r}dg_{t}dt\right|_{s=s_{1}}^{s=s_{2}}= &- \int_{s_{1}}^{s_{2}}\int_{t_{0}}^{t_{0}'}\int_{\mathcal{R}_{t}}\left\langle J\nabla q,\nabla\left(f_{s}(\overline{\eta}_{\delta}\circ e^{-f_{s}})\right)\right\rangle \eta_{r}dg_{t}dtds\\
\nonumber
= & \int_{s_{1}}^{s_{2}}\int_{t_{0}}^{t_{0}'}\int_{\mathcal{R}_{t}}\left(\left\langle J\nabla q,\nabla\eta_{r}\right\rangle +\text{div}(J\nabla q)\eta_{r}\right)f_{s}(\overline{\eta}_{\delta}\circ e^{-f_{s}})dg_{t}dtds\\
= & \int_{s_{1}}^{s_{2}}\int_{t_{0}}^{t_{0}'}\int_{\mathcal{R}_{t}}\left\langle J\nabla q,\nabla\eta_{r}\right\rangle f_{s}(\overline{\eta}_{\delta}\circ e^{-f_{s}})dg_{t}dtds. \label{RHS}
\end{align}
We recall that $f_{s}$ is bounded uniformly (in $s$) from below on $\mathcal{R}_{[t_0,t_0']}$, so $f_s (\overline{\eta}_{\delta}\circ e^{-f_s})$ is uniformly bounded on $\mathcal{R}_{[t_0,t_0']}$.  We note that
$$\bigcup_{s\in [-\sigma^{-1},\sigma^{-1}]} \text{supp}\left(\overline{\eta}_{\delta}\circ e^{-f_s}\right)\cap \mathcal{R}_{[t_0,t_0']} \subseteq L$$
for any fixed $\delta>0$, where $L \subseteq \mathcal{X}_{[t_0,t_0']}$ is compact. Integrating the estimate $|\nabla \sqrt{f-W}|\leq \frac{1}{2\sqrt{\tau}}$ along almost-minimizing curves in $\mathcal{R}_t$ we obtain $\sup_L |f| <\infty$, and so $\sup_L |\nabla f|<\infty$. Thus, we can bound the right hand side of (\ref{RHS}) by
\[
C\int_{t_{0}}^{t_{0}'}\int_{L\cap\mathcal{R}_{t}}|\nabla f|\cdot|\nabla\eta_{r}|dg_{t}dt\leq C\int_{t_{0}}^{t_{0}'}\int_{L\cap\mathcal{R}_{t}}|\nabla\eta_{r}|dg_{t}dt \leq Cr^2,
\]
where $C<\infty$ is independent of $r>0$, and the last inequality follows from the estimate for $\{r_{Rm} <r\}$ in Lemma 15.27 of \cite{bamlergen3}. We can therefore take $r\to 0$ to obtain
\[
\int_{t_{0}}^{t_{0}'}\int_{\mathcal{R}_{t}}f_{s_{1}}(\overline{\eta}_{\delta}\circ e^{-f_{s_{1}}})dg_{t}dt=\int_{t_{0}}^{t_{0}'}\int_{\mathcal{R}_{t}}f_{s_{2}}(\overline{\eta}_{\delta}\circ e^{-f_{s_{2}}})dg_{t}dt.
\]
Finally, we take $\delta\searrow0$ and appeal to the dominated convergence
theorem to get the desired identity.

Now let $(\phi_s)_{s\in \mathbb{R}}$ be the flow on $\mathcal{R}_X$ generated by $J\nabla q$ (restricted to a time slice of $\mathcal{R}$).
For any $x_{1},x_{2}\in X$, if $\epsilon>0$ and $\gamma:[0,1]\to X$
is a curve with image in the regular set $\mathcal{R}_{X}$ of
$X$ and $\text{length}(\gamma)<d(x_{1},x_{2})+\epsilon$, then
$\phi_{s}\circ\gamma$ is a curve in $\mathcal{R}_{X}$ from $\phi_{s}(x_{1})$
to $\phi_{s}(x_{2})$ with $\text{length}(\gamma)$, so $d(\phi_{s}(x_{1}),\phi_{s}(x_{2}))<d(x_{1},x_{2})+\epsilon$;
taking $\epsilon\searrow0$, and replacing $x_{1},x_{2}$ with $\phi_{-s}(x_{1}),\phi_{-s}(x_{2})$
implies that $\phi_{s}:(\mathcal{R}_X,d)\to(\mathcal{R}_{X},d)$
is an isometry for all $s\in\mathbb{R}$. We can therefore extend
to a unique isometry $\phi_{s}:(X,d)\to(X,d)$, whose image is closed
and contains $\mathcal{R}_{X}$, hence is bijective. 

\end{proof}

The proof strategy for the following proposition is roughly similar to that of Theorem 2 in \cite{gangliucone}. 

\begin{prop} \label{locallyfree} Let $\mathcal{X}$ be as in Proposition 6.1, and assume $Rc(g_X)=0$, so that $X=C(Y)$ is a metric cone with vertex $\{ o\}$. Then the 1-parameter group of isometries $(\phi_{s})_{s\in \mathbb{R}}$ of $C(Y)$ acts locally freely on the link $Y$.
\end{prop}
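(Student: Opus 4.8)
The plan is to show that $(\phi_s)$ has no fixed point on the link $Y=\partial B(o,1)$; since every isotropy group of an $\mathbb{R}$-action is a closed subgroup of $\mathbb{R}$, this is equivalent to local freeness of the action on $Y$. First note that $\phi_s$ preserves $q$, hence preserves $r:=\sqrt{q}=d(o,\cdot)$ (by Theorem \ref{tangentflows}, $q=d^2(o,\cdot)$ in the static case), so $\phi_s$ restricts to an isometric action on every level set $\{r=c\}$, and a point $y\in Y$ is fixed by the whole group exactly when the corresponding point of $\{r=1\}\subseteq X$ lies in $F:=\mathrm{Fix}((\phi_s)_{s\in\mathbb{R}})$. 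On the regular part this is clear: since $r$ is a distance function, $|J\nabla q|=|\nabla q|=2r>0$ on $\mathcal{R}_X\setminus\{o\}$, so every orbit through $\mathcal{R}_X\setminus\{o\}$ is nonconstant and $F\cap(\mathcal{R}_X\setminus\{o\})=\varnothing$ (for $c=1$ this is just that the Reeb field of the Sasaki--Einstein link is nowhere zero). It remains to rule out a fixed point $p\in\mathcal{S}_X$ with $\lambda_0:=r(p)>0$.

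To do this I would blow up at $p$, in the spirit of Liu's proof of Theorem~2 of \cite{gangliucone}. Along a subsequence $\delta_j\searrow0$, the rescaled pointed spaces $(X,\delta_j^{-1}d,p)$ converge to a tangent cone $(C_pX,\hat d,\hat p)$ --- a metric cone with vertex $\hat p$, with $C^\infty_{\mathrm{loc}}$-convergence on the regular parts (invoking Bamler's theory and the cone statements cited from \cite{bamlergen3}). Since each $\phi_s$ fixes $p$, the time-rescaled isometries $\phi_{\delta_j u}$ of $(X,\delta_j^{-1}d)$ are equicontinuous and fix $p$, so they subconverge to a one-parameter group $(\hat\phi_u)_{u\in\mathbb{R}}$ of isometries of $C_pX$ fixing $\hat p$, smooth on $\mathcal{R}_{C_pX}$ with generating Killing field $\hat K$. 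The key point is that $|\hat K|$ is a positive constant on $\mathcal{R}_{C_pX}$: the $\phi_s$-orbit of a point $z\in\mathcal{R}_X$ near $p$ has $d$-speed $|J\nabla q|=2r$ and, since $\phi_s$ fixes $p$, stays on $\partial B(p,d(z,p))$, so its speed lies in $2[\lambda_0-d(z,p),\lambda_0+d(z,p)]$; after the time rescaling (which exactly cancels the $\delta_j$-factor in the speed) and passing to the limit, the orbit of \emph{every} regular point of $C_pX$ has speed $v_0:=2\lambda_0>0$, independent of its distance to $\hat p$.

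This gives the contradiction. A nowhere-zero Killing field of constant norm has geodesic integral curves ($\nabla_{\hat K}\hat K=\frac12\nabla|\hat K|^2=0$); since $\hat\phi_u$ fixes $\hat p$, every such orbit stays at a constant distance $\rho_0>0$ from the vertex. But in the smooth warped-product metric $d\hat r^2+\hat r^2 h$ on $\mathcal{R}_{C_pX}=(0,\infty)\times\mathcal{R}_L$, the radial component of the acceleration of a unit-speed curve lying in a sphere $\{\hat r=\rho_0\}$ equals $-\rho_0^{-1}\neq0$, so no nonconstant geodesic of $C_pX$ can be contained in a metric sphere about $\hat p$. Hence $F\subseteq\{o\}$, and $(\phi_s)$ acts locally freely on $Y$.

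The main obstacle I anticipate is the blow-up step: justifying the existence of the tangent cone $C_pX$ and of the limiting isometric action $(\hat\phi_u)$, and enough regularity of the convergence on the regular sets to carry the identity $|J\nabla q|=2r$ over to $C_pX$ --- all of which relies on Bamler's compactness and partial regularity theory, together with the static/cone structure results quoted from \cite{bamlergen3}. One must also check that orbits of regular points stay in the regular set (so that the warped-product computation is legitimate), which follows from the construction of the flow on $\mathcal{R}_X$ in Proposition \ref{action}.
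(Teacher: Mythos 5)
Your proposal is correct, and it follows a genuinely different route from the paper's. Both arguments begin identically (suppose $z\in Y$ is fixed, blow up at $z$, land on the static tangent cone $C(Z)$), but the mechanism for extracting a contradiction differs substantially. The paper's proof stays inside Bamler's smooth-approximation framework: it takes strong almost-radial functions $q_i$ on the approximating K\"ahler--Ricci flows, shows that their rescalings near $(z_i,-1)$ are almost-splitting maps, invokes Proposition~\ref{meat} to turn $J\nabla q_i/(2\sqrt{a_i})$ into a new (strong) almost-splitting direction, and then passes to the blow-up to identify the limiting vector field $V_\infty$ with the gradient $\nabla y_\infty$ of a genuine splitting coordinate on $C(Z)$. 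The contradiction is that $\nabla y_\infty$ is complete and escapes every compact set, while $V_\infty$ annihilates $d(o_Z,\cdot)$. Your argument never constructs a splitting map: it works directly with the limiting isometric flow $(\hat\phi_u)$ and its Killing field $\hat K$, observes that $|\hat K|\equiv 2\lambda_0$ is constant (the Lipschitz estimate $|r-\lambda_0|\le d(\cdot,p)$ forces this after rescaling), uses the elementary fact that a constant-norm Killing field has geodesic integral curves, and notes that a nonconstant geodesic of a Riemannian cone cannot lie on a metric sphere about the vertex. This is conceptually lighter -- the contradiction step is purely Riemannian and needs neither Proposition~\ref{meat} nor the almost-splitting machinery of \cite{bamlergen3}. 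What the paper's route buys instead is that it never has to justify convergence of the rescaled one-parameter isometry groups $(\phi_{\delta_j u})$ to a smooth limit flow on $\mathcal{R}_{C(Z)}$, nor the $C^\infty_{\mathrm{loc}}$ convergence of the generating fields there: you correctly flag these as the points that need care. Those steps are standard (Gromov--Arzel\`a--Ascoli for the isometries, elliptic bootstrap for smooth convergence on regular parts, the static blow-up structure from Theorem 2.16 of \cite{bamlergen3}), so they are technical rather than fundamental gaps, but spelling them out would make your argument comparable in length to the paper's. A tiny slip: for a Killing field one has $\nabla_{\hat K}\hat K = -\tfrac12\nabla|\hat K|^2$, not $+\tfrac12$, though both vanish when $|\hat K|$ is constant, so your conclusion is unaffected.
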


\begin{rem}
The rough idea to assume by way of contradiction that a point $z\in C(Y)\setminus \{o\}$ is fixed by the action $(\phi_{s})$, so that $\phi_s$ preserves the distance to $z$. Let $q_i$ be a sequence of almost-radial functions based at $(x_i,0)$, and let $(z_i,-1) \in M_i\times [\epsilon_i^{-1},0]$ converge to $(z,-1)$. At sufficiently small scales near $(z_i,-1)$, appropriate rescalings of $q_i$ look like almost-splitting functions, so Proposition \ref{meat} gives almost-splitting functions $y_i$ with $\nabla y_i \approx J\nabla q_i$. By a diagonal argument, after parabolic rescaling of flows, we get convergence of $y_i$ to a function $y_{\infty}$ on the tangent cone $C(Z)$ at $z$ which induces a metric splitting. On the other hand, $\nabla y_i \approx J\nabla q_i$ implies that the flow of $\nabla y_{\infty}$ preserves the distance to the vertex of $C(Z)$, a contradiction.
\end{rem}

\begin{proof}
Fix a correspondence $\mathfrak{C}$ realized the $\mathbb{F}$-convergence to $\mathcal{X}$. It suffices to show that there is no point $z\in\partial B(o,1)$
satisfying $\phi_{s}(z)=z$ for all $s\in\mathbb{R}$. Suppose by
way of contradiction such a point exists. For any $x\in\mathcal{R}_{C(Y)}$,
we then have $d(\phi_{s}(x),z)=d(x,z)$ for all $s\in\mathbb{R}$.
Choose a sequence $z_{i}\in M_{i}$ such that 
\[
(z_{i},-1)\xrightarrow[i\to\infty]{\mathfrak{C}}(z,-1)\in C(Y)\times(-\infty,0)=\mathcal{X}_{<0}.
\]
By Proposition \ref{stronger}, there is a sequence $\delta_{i}\searrow0$ such that if $W_i :=\mathcal{N}_{x_i,0}(1)$, then $q_i :=4\tau(h_i-W_i)$ are strong $(\delta_i,1)$-conical functions based at $(x_i,0)$ which satisfy
\begin{equation} \label{superq}
\int_{-\delta_{i}^{-1}}^{-\delta_{i}}\int_{M_{i}}\left(\left|\nabla^{2}q_{i}-2g_{i}\right|^{2}+\left||\nabla q_{i}|^{2}-4q_{i}\right|\right)e^{\alpha f_{i}}d\nu_{x_{i},0;t}^{i}dt\leq\delta_{i}
\end{equation}
for some $\alpha>0$, where we have written $\nu_{x_{i},0;t}^{i}=(4\pi\tau)^{-\frac{n}{2}}e^{-f_{i}}dg_{i,t}$. By the proof of Theorem 15.80 in \cite{bamlergen3}, we can therefore pass to a subsequence so that $q_i \circ \psi_i \to q_{\infty}:=d^2(\cdot,o)$ in $C_{loc}^{\infty}(\mathcal{R}_{<0})$ as $i \to \infty$, where $\psi_i$ are as in Theorem \ref{bamconvergence}. This implies
\[
\liminf_{i\to\infty}\frac{1}{\tau^{\ast}}\int_{-1-2\tau^{\ast}}^{-1-\tau^{\ast}}\int_{M_i}(q_{i})_{+}d\nu_{z_{i},-1;t}^idt\geq\frac{1}{\tau^{\ast}}\int_{-1-2\tau^{\ast}}^{-1-\tau^{\ast}}\int_{\mathcal{R}_{C(Y)}}q_{\infty} d\nu_{z,-1;t}dt.
\]

\noindent \textbf{Claim:} $\lim_{t\nearrow -1} \int_{\mathcal{R}_t} q_{\infty} d\nu_{z,-1;t}=q_{\infty}(z)=1$.\\

Now choose sequences $t_j \nearrow -1$, $y_j \in X$ such that $(y_j,t_j)$ are $H_n$-centers of $(z,-1)$. Because $\min(q_{\infty},4)$ is 2-Lipschitz, then have
$$\left| \int_{\mathcal{R}_t} \min(q_{\infty},4) d\nu_{z,-1;t_j}-\min(q_{\infty}(y_j),4) \right| \leq 2 \sqrt{H_n (1+t_j)} \to 0$$
as $j\to \infty$. However, Claim 22.9(d) of \cite{bamlergen3} implies that the natural topology agrees on $\mathcal{X}$ agrees with the product topology on $C(Y)\times (-\infty,0)$; because $(y_j,t_j) \to (z,-1)$ in the natural topology, we have $y_j \to z$ in $C(Y)$, hence
$$\lim_{j\to \infty}q_{\infty}(y_j)=q_{\infty}(z)=1.\hspace{20 mm} \square$$

We can therefore find $\gamma>0$ such that, for any $\tau^{\ast}\in (0,1)$, we have
$$\frac{1}{\tau^{\ast}} \int_{-1-2\tau^{\ast}}^{-1-\tau^{\ast}}\int_{M_i} (q_i)_+ d\nu_{z_i,-1;t}^idt \geq \gamma^2$$
for $i=i(\tau^{\ast},\gamma)\in \mathbb{N}$ sufficiently large. Because $(z_i,-1)\xrightarrow[i\to \infty]{\mathfrak{C}}(z,-1)$, there exists $A<\infty$ such that $(z_i,-1)\in P^{\ast}(x_{\infty};A,-A^2)$ for all $i\in \mathbb{N}$. We can therefore use Bamler's conjugate heat kernel comparison theorem (Proposition 8.1 in \cite{bamlergen3}) and (\ref{superq}) to obtain
$$\int_{-1-(\delta_i')^{-1}}^{-1-\delta_i'} \int_{M_i} \left( |\nabla^2 q_i - 2g_i|^2 +\left| |\nabla q_i|^2-4q_i \right| \right) d\nu_{z_i,-1;t}^idt \leq \delta_i'$$
for some sequence $\delta_i'\searrow 0$. We may then proceed as in the proof of Proposition 13.19 of \cite{bamlergen3} to conclude that, for any $\epsilon>0$, 
$$\frac{1}{2\sqrt{a_i}}q_i:M_i \times [-1-(\gamma \beta)^2 \epsilon^{-1},-1-(\gamma\beta)^2\epsilon]\to \mathbb{R}$$ are $(1,\epsilon,\gamma\beta)$-splitting maps for $\beta \leq \overline{\beta}(\epsilon)$, where
$$a_i:= \int_{M_i} q_i d\nu_{z_i,-1;t_1 -(\beta \gamma)^2}^i$$
satisfies 
$$\frac{1}{2}\gamma^2 \leq a_i \leq C\int_{M_i} q_i e^{\alpha f_i}d\nu_t^i \leq C\left(\int_{M_i} q_i^2 d\nu_t^i \right)^{\frac{1}{2}} \left( \int_{M_i} e^{2\alpha f_i} d\nu_t^i \right)^{\frac{1}{2}} \leq C(Y,\epsilon,\beta,\gamma).$$
In fact, the lower bound follows from the estimate for $|\square q_i|$, (c.f. the proof of Proposition 12.1 of \cite{bamlergen3}), while the upper bound follows from the $L^2$-Poincare inequality and and property $(iv)$ of strong almost-radial functions.

We now apply Proposition 12.1 of \cite{bamlergen3}
to obtain strong $(1,\epsilon'',\gamma \beta)$-splitting maps $y_i ''$ with
$$\beta^{-2} \int_{-1-(\beta \gamma)^2 \epsilon''^{-1}}^{-1-(\beta \gamma)^2 \epsilon''} \int_{M_i} \left|\nabla \left( \frac{q_i}{2\sqrt{a_i}} -y_i'' \right) \right|^2 d\nu_{z_i,-1;t}^idt \leq \Psi(\beta | Y,\epsilon'')$$
for sufficiently large $i\in \mathbb{N}$. Next, apply Proposition \ref{meat} to obtain a weak $(1,\epsilon',\gamma \beta)$-splitting map $y_i '$ satisfying
$$\beta^{-2} \int_{-1-(\beta \gamma)^2 \epsilon'^{-1}}^{-1-(\beta \gamma)^2 \epsilon'} \int_{M_i} \left| \frac{J\nabla q_i}{2\sqrt{a_i}} - \nabla y_i '\right|^2 d\nu_{z_i,-1;t}^idt \leq \Psi(\beta | Y,\epsilon ')$$
for sufficiently large $i\in \mathbb{N}$, assuming $\epsilon'' \leq \overline{\epsilon}'(\epsilon '')$. Another application of Proposition 12.1 of \cite{bamlergen3} yields strong $(1,\epsilon,\gamma \beta)$-splitting maps $y_i^{\beta} : M_i \times [-1-\beta^2 \epsilon^{-1}, -1-\beta^2 \epsilon]\to \mathbb{R}$ satisfying
$$\beta^{-2} \int_{-1-(\beta \gamma)^2 \epsilon^{-1}}^{-1-(\beta \gamma)^2 \epsilon} \int_{M_i} \left| \frac{J\nabla q_i}{2\sqrt{a_i}} -\nabla y_i^{\beta}\right|^2 d\nu_{z_i,-1;t}^idt\leq \Psi(\beta|Y,\epsilon)$$
for large $i=i(\beta)\in \mathbb{N}$, assuming $\epsilon '\leq \overline{\epsilon} '(\epsilon)$. We also pass to a subsequence so that $a_i \to a \in (0,\infty)$. Then 
$$(\psi_i^{-1})_{\ast}\left( J_i\frac{\nabla q_i}{2\sqrt{a_i}} \right) \to V$$
in $C_{loc}^{\infty}(\mathcal{R}_{C(Y)})$, where $V:=\frac{1}{2\sqrt{a}}J\nabla q_{\infty}$.
\\

Using Theorem \ref{tangentflows}, choose a sequence $\beta_j \searrow 0$ such that we have $\mathbb{F}$-convergence of the corresponding parabolic rescalings to a tangent flow of $\mathcal{X}$ based at $(z,-1)$:
$$\left( \mathcal{X}^{-1,\beta_j^{-1}},(\nu_{(z,0);t}^{-1,\beta_j^{-1}})_{t\in [-2,0]} \right) \xrightarrow[i\to \infty]{\mathbb{F}} \left( \mathcal{Y},(\nu_{y_{\infty};t})_{t\in [-2,0]}\right),$$
where $\mathcal{Y}$ is a static metric flow modeled on a a Ricci flat cone $C(Z)$. By Theorem 2.16 of \cite{bamlergen3}, there is a precompact exhaustion $(W_{j})$ of $\mathcal{R}_{C(Z)}$
along with diffeomorphisms $\eta_{j}:W_{j}\to \mathcal{R}_{C(Y)}$
such that $\eta_{j}^{\ast}(\beta_{j}^{-2}g_{C(Y)})\to g_{C(Z)}$ in $C_{loc}^{\infty}(\mathcal{R}_{C(Z)})$, and so that for any $\epsilon>0$ and $D<\infty$, 
$$\eta_{j}:\left( W_{j}\cap B(o_{Z},D), d_{C(Z)} \right) \to \left(C(Y),\beta_j^{-1}d_{C(Y)}\right)$$
are $\epsilon$-Gromov-Hausdorff maps to $B(z,\beta_j D)$ for sufficiently
large $j=j(\epsilon,D)\in\mathbb{N}$. Define $\widetilde{g}_j:=\beta_j^{-2}g_{C(Y)}$ and $V_j := \beta_j V$, so that $|V_j|_{\widetilde{g}_j}\leq 10$ on $\mathcal{R}_{C(Y)}\cap B(o,10)$; by $\Delta V_j=0$ and elliptic regularity, we can pass to a subsequence so that $(\eta_j^{-1})_{\ast}V_j \to V_{\infty}$ in $C_{loc}^{\infty}(\mathcal{R}_{C(Z)})$. Moreover, if we define $r_j := \beta_j^{-1}d(z,\cdot)$, then $r_j \circ \eta_j \to r_{\infty} :=d(o_Z,\cdot)$ locally uniformly on $\mathcal{R}_{C(Z)}$; this follows from the Gromov-Hausdorff convergence $(X,\beta_j^{-1}d,z)\to (C(Z),d_{C(Z)},o_Z)$ (Theorem 2.16 of \cite{bamlergen3}). Because $r_j$ are Lipschitz, we know $V_j r_j$ is well-defined almost everywhere on $\mathcal{R}_{C(Y)}$. Because $r_j \circ \phi_s =r_j$, we may then conclude $V_j r_j=0$ almost everywhere. Given $\chi \in C_c^{\infty}(\mathcal{R}_{C(Z)})$, we have
\begin{align*} \int_{\mathcal{R}_{C(Z)}} \phi V_{\infty}r_{\infty} dg_{C(Z)}&= \int_{\mathcal{R}_{C(Z)}} r_{\infty}\text{div}(\phi V_{\infty}) dg_{C(Z)} =\lim_{j\to \infty} \int_{\mathcal{R}_{C(Y)}} r_j \text{div}\left( (\phi \circ \eta_j^{-1})V_j \right) dg_{C(Y)} \\
&= \lim_{j \to \infty} \int_{\mathcal{R}_{C(Y)}} (\phi \circ \eta_j^{-1})V_j r_j dg_{C(Y)}=0
\end{align*}
since $r_j \circ \eta_j \to r_{\infty}$ uniformly and $(\eta_j^{-1})_{\ast}V_j \to V_{\infty}$ in $C_{loc}^{\infty}$.  Thus $V_{\infty}r_{\infty}=0$ almost everywhere in $\mathcal{R}_{C(Z)}$, so the flow of $V_{\infty}$ preserves $r_{\infty}$.

By Bamler's change of basepoint theorem (Theorem 6.40 in \cite{bamlergen2}), we have
$$(M_i,(g_{i,t})_{t\in[-\delta_i^{-1},-1]},(\nu_{z_i,-1;t})_{t\in[-\delta_i^{-1},-1]})\xrightarrow[i\to\infty]{\mathbb{F},\mathfrak{C}}(\mathcal{X},(\nu_{z,-1;t})_{t\in(-\infty,-1]})$$
on compact time intervals. For each $j\in \mathbb{N}$, we can therefore choose $i(j)\in \mathbb{N}$ such that $\eta_j(W_j)\subseteq U_{i(j)}$,
$$||\psi_{i(j)}^{\ast} g_{i(j)}-g_{C(Y)}||_{C^j(U_{i(j)} \times [-2,-1-\beta_j^{4}],g_{C(Y)})}+||\psi_{i(j)}^{\ast} q_{i(j)}-q_{\infty}||_{C^j(U_{i(j)}\times [-2,-1-\beta_j^{4}],g_{C(Y)})}\leq \beta_j^{4},$$
$$d_{\mathbb{F}}\left( (M_{i(j)},(g_{i(j),t})_{t\in[-\beta_j^{-4},0]},(\nu_{z_{i(j)},-1;t})_{t\in[-\beta_j^{-4},-1]}),(\mathcal{X}_{[\beta_j^{-4},-1]},(\nu_{z,-1;t})_{t\in[-\beta_j^{-4},-1]}) \right) < \beta_j^4,$$
$$\int_{-1-\epsilon_j^{-1} \beta_j^2}^{-1-\epsilon_j \beta_j^2} \int_{M_{i(j)}} \left| \frac{J\nabla q_{i(j)}}{2\sqrt{a_{i(j)}}} -\nabla y_{i(j)}^{\beta_j}\right|^2 d\nu_{z_{i(j)},-1;t}dt\leq \epsilon_j \beta_j^2$$
for some sequence $\epsilon_j \searrow 0$, where we now view $\eta_j$ as maps $W_j \times [-2,-1]\to \mathcal{R}_{C(Y)}\times [-2,-1]$ which are constant in time. 
. Define parabolic rescalings $\widehat{g}_{j,t} := \beta_j^{-2}g_{i(j),-1+\beta_j^{2}t}$, $\widehat{\nu}_t^j:=\nu_{z_{i(j)},-1;-1+\beta_j^{2}t}$, $(1,\epsilon_j,1)$-splitting maps $$\widehat{y}_j(\cdot,t):=\beta_j^{-1} y_{i(j)}^{\beta_j}(\cdot,-1+\beta_j^{2}t)$$
based at $(z_{i(j)},0)$ in the rescaled flow, $\widehat{a}_j:=\beta_j^{-2}a_{i(j)}$, and
$$\widehat{q}_j(\cdot,t):=\beta_j^{-2}q_{i(j)}(\cdot,-1+\beta_j^2 t),$$
Then
$$(M_{i(j)},(\widehat{g}_{j,t})_{t\in [-2,0]},(\widehat{\nu}_t^j)_{t\in[-2,0]})\xrightarrow[i\to\infty]{\mathbb{F}}(\mathcal{Y},(\nu_{o_Z,0;t})_{t\in [-2,0]}),$$
and $\psi_{i(j)}\circ \eta_j$ realizes smooth convergence on $\mathcal{R}_{C(Z)}$; for example, $(\psi_{i(j)}\circ \eta_j)^{\ast}\widehat{g}_j \to g_{C(Z)}$ in $C_{loc}^{\infty}(\mathcal{R}_{C(Z)})$.
so the proof of Theorem 15.50 in \cite{bamlergen3} shows that $\mathcal{Y}=\mathcal{Y}'\times \mathbb{R}$ splits as a metric flow, and $(\psi_{i(j)}\circ \eta_j)^{\ast}\widehat{y}_j \to y_{\infty}$, where $y_{\infty}: \mathcal{Y}\to \mathbb{R}$ denotes the projection onto the $\mathbb{R}$-factor. On the other hand, our assumptions guarantee that  
$$\left\Vert \frac{1}{2\sqrt{\widehat{a}_j}}(\psi_{i(j)}^{-1})_{\ast}\nabla^{\widehat{g}_j}\widehat{q}_j(\cdot,t)-V_j\right\Vert_{C^{j-1}(U_{i(j)},\widetilde{g}_j)}\leq C\beta_j^{2},$$
which implies
$$\frac{1}{2\sqrt{\widehat{a}_j}}((\psi_{i(j)} \circ \eta_j)^{-1})_{\ast} \nabla^{\widehat{g}_j}\widehat{q}_j(\cdot,t)\to V_{\infty}$$
in $C_{loc}^{\infty}(\mathcal{R}_{C(Z)})$. Here, we again view $\eta_j$ as a map $W_j \to U_{i(j)}$ which is constant in time.  Let $\widehat{K}_j$ denote the heat kernel of the rescaled flows. For any compact subset $K\subseteq \mathcal{R}_{C(Z)}$, we then have
\begin{align*} \int_{-2}^{-1}\int_K |V_{\infty}-\nabla y_{\infty}|^2 d\nu_{o_Z,-1;t}dt \hspace{-40 mm}& \\
&= \lim_{j\to \infty} \int_{-2}^{-1}\int_K \left| \left( (\psi_{i(j)}\circ \eta_j)^{-1} \right)_{\ast} \left( \frac{\nabla^{\widehat{g}_j}\widehat{q}_j}{2\sqrt{\widehat{a}_j}} - \nabla^{\widehat{g}_j}\widehat{y}_j \right) \right|^2 (\psi_{i(j)}\circ \eta_j)^{\ast}\widehat{K}_j(z_{i(j)},0;\cdot,t)d\left( (\psi_{i(j)}\circ \eta_j)^{\ast} \widehat{g}_j \right) dt \\ 
&= \lim_{j \to \infty} \int_{-2}^{-1} \int_{(\psi_{i(j)}\circ \eta_j)(K)} \left| \frac{\nabla^{\widehat{g}_j}\widehat{q}_j}{2\sqrt{\widehat{a}_j}} -\nabla^{\widehat{g}_j} \widehat{y}_j \right|^2 d\widehat{\nu}_t^j dt \\ 
&\leq \liminf_{j\to \infty} \beta_j^{-2} \int_{-1-2\beta_j^2}^{-1-\beta_j^2} \int_{M_{i(j)}} \left| \frac{\nabla^{g_{i(j)}}q_j}{2\sqrt{a_{i(j)}}} - \nabla^{g_{i(j)}}y_{i(j)} \right|^2 d\nu_{z_{i(j)},-1;t}dt=0,
\end{align*}
where we used that 
$$\left( (\psi_{i(j)}\circ \eta_j)^{-1} \right)_{\ast} \nabla^{\widehat{g}_j}\widehat{y}_j = \nabla^{(\psi_{i(j)}\circ \eta_j)^{\ast} \widehat{g}_j} (\psi_{i(j)} \circ \eta_j)^{\ast}\widehat{y}_j \to \nabla^{g_{C(Z)}} y_{\infty}$$
in $C_{loc}^{\infty}(\mathcal{R}_{C(Z)})$. Thus $V_{\infty}=\nabla y_{\infty}$. However, $\nabla y_{\infty}$ is a complete vector field on $\mathcal{R}_{C(Z)}$ which leaves any compact set in finite time, whereas the flow of $V_{\infty}$ preserves any geodesic ball centered at $o_Z$, a contradiction.
\end{proof}

\begin{proof}[Proof of Theorem \ref{mainthm3}] This is a consequence of Propositions \ref{action} and \ref{locallyfree}.
\end{proof}

\printbibliography

@misc{bamlerkleiner,
      title={Uniqueness and stability of Ricci flow through singularities}, 
      author={Richard H. Bamler and Bruce Kleiner},
      year={2018},
      eprint={1709.04122},
      archivePrefix={arXiv},
      primaryClass={math.DG}
}

@misc{bamlergen1,
      title={Entropy and heat kernel bounds on a Ricci flow background}, 
      author={Richard H Bamler},
      year={2020},
      eprint={2008.07093},
      archivePrefix={arXiv},
      primaryClass={math.DG}
}

@misc{bamlergen2,
      title={Compactness theory of the space of Super Ricci flows}, 
      author={Richard H Bamler},
      year={2020},
      eprint={2008.09298},
      archivePrefix={arXiv},
      primaryClass={math.DG}
}

@misc{bamlergen3,
      title={Structure theory of non-collapsed limits of Ricci flows}, 
      author={Richard H Bamler},
      year={2020},
      eprint={2009.03243},
      archivePrefix={arXiv},
      primaryClass={math.DG}
}

@article {cheegercoldingwarped,
    AUTHOR = {Cheeger, Jeff and Colding, Tobias H.},
     TITLE = {Lower bounds on {R}icci curvature and the almost rigidity of
              warped products},
   JOURNAL = {Ann. of Math. (2)},
  FJOURNAL = {Annals of Mathematics. Second Series},
    VOLUME = {144},
      YEAR = {1996},
    NUMBER = {1},
     PAGES = {189--237},
      ISSN = {0003-486X},
   MRCLASS = {53C21 (53C20 53C23)},
  MRNUMBER = {1405949},
MRREVIEWER = {Joseph E. Borzellino},
       DOI = {10.2307/2118589},
       URL = {https://doi-org.proxy.library.cornell.edu/10.2307/2118589},
}

@article {cheegercolding1,
    AUTHOR = {Cheeger, Jeff and Colding, Tobias H.},
     TITLE = {On the structure of spaces with {R}icci curvature bounded
              below. {I}},
   JOURNAL = {J. Differential Geom.},
  FJOURNAL = {Journal of Differential Geometry},
    VOLUME = {46},
      YEAR = {1997},
    NUMBER = {3},
     PAGES = {406--480},
      ISSN = {0022-040X},
   MRCLASS = {53C21 (53C20)},
  MRNUMBER = {1484888},
MRREVIEWER = {William P. Minicozzi, II},
       URL = {http://projecteuclid.org.proxy.library.cornell.edu/euclid.jdg/1214459974},
}

@article {cheegercolding2,
    AUTHOR = {Cheeger, Jeff and Colding, Tobias H.},
     TITLE = {On the structure of spaces with {R}icci curvature bounded
              below. {II}},
   JOURNAL = {J. Differential Geom.},
  FJOURNAL = {Journal of Differential Geometry},
    VOLUME = {54},
      YEAR = {2000},
    NUMBER = {1},
     PAGES = {13--35},
      ISSN = {0022-040X},
   MRCLASS = {53C21 (49Q15 53C20 53C23)},
  MRNUMBER = {1815410},
MRREVIEWER = {Man Chun Leung},
       URL = {http://projecteuclid.org.proxy.library.cornell.edu/euclid.jdg/1214342145},
}

@article {cheegercoldingtian,
    AUTHOR = {Cheeger, J. and Colding, T. H. and Tian, G.},
     TITLE = {On the singularities of spaces with bounded {R}icci curvature},
   JOURNAL = {Geom. Funct. Anal.},
  FJOURNAL = {Geometric and Functional Analysis},
    VOLUME = {12},
      YEAR = {2002},
    NUMBER = {5},
     PAGES = {873--914},
      ISSN = {1016-443X},
   MRCLASS = {53C21 (53C20)},
  MRNUMBER = {1937830},
MRREVIEWER = {Zhongmin Shen},
       DOI = {10.1007/PL00012649},
       URL = {https://doi-org.proxy.library.cornell.edu/10.1007/PL00012649},
}

@article {cheegernaberquant,
    AUTHOR = {Cheeger, Jeff and Naber, Aaron},
     TITLE = {Lower bounds on {R}icci curvature and quantitative behavior of
              singular sets},
   JOURNAL = {Invent. Math.},
  FJOURNAL = {Inventiones Mathematicae},
    VOLUME = {191},
      YEAR = {2013},
    NUMBER = {2},
     PAGES = {321--339},
      ISSN = {0020-9910},
   MRCLASS = {53C21 (32Q20 53C23 53C25)},
  MRNUMBER = {3010378},
MRREVIEWER = {Leonid V. Kovalev},
       DOI = {10.1007/s00222-012-0394-3},
       URL = {https://doi-org.proxy.library.cornell.edu/10.1007/s00222-012-0394-3},
}

@article {coldingnaber,
    AUTHOR = {Colding, Tobias Holck and Naber, Aaron},
     TITLE = {Sharp {H}\"{o}lder continuity of tangent cones for spaces with a
              lower {R}icci curvature bound and applications},
   JOURNAL = {Ann. of Math. (2)},
  FJOURNAL = {Annals of Mathematics. Second Series},
    VOLUME = {176},
      YEAR = {2012},
    NUMBER = {2},
     PAGES = {1173--1229},
      ISSN = {0003-486X},
   MRCLASS = {53C21 (53C20)},
  MRNUMBER = {2950772},
MRREVIEWER = {Yu Ding},
       DOI = {10.4007/annals.2012.176.2.10},
       URL = {https://doi-org.proxy.library.cornell.edu/10.4007/annals.2012.176.2.10},
}

@article {fik,
    AUTHOR = {Feldman, Mikhail and Ilmanen, Tom and Knopf, Dan},
     TITLE = {Rotationally symmetric shrinking and expanding gradient
              {K}\"{a}hler-{R}icci solitons},
   JOURNAL = {J. Differential Geom.},
  FJOURNAL = {Journal of Differential Geometry},
    VOLUME = {65},
      YEAR = {2003},
    NUMBER = {2},
     PAGES = {169--209},
      ISSN = {0022-040X},
   MRCLASS = {53C44 (32Q20 53C25)},
  MRNUMBER = {2058261},
MRREVIEWER = {Xi Ping Zhu},
       URL = {http://projecteuclid.org.proxy.library.cornell.edu/euclid.jdg/1090511686},
}

@article {gangliucone,
    AUTHOR = {Liu, Gang},
     TITLE = {On the tangent cone of {K}\"{a}hler manifolds with {R}icci
              curvature lower bound},
   JOURNAL = {Math. Ann.},
  FJOURNAL = {Mathematische Annalen},
    VOLUME = {370},
      YEAR = {2018},
    NUMBER = {1-2},
     PAGES = {649--667},
      ISSN = {0025-5831},
   MRCLASS = {53C55 (32Q15 53C23)},
  MRNUMBER = {3747498},
MRREVIEWER = {Matthew B. Stenzel},
       DOI = {10.1007/s00208-017-1536-0},
       URL = {https://doi-org.proxy.library.cornell.edu/10.1007/s00208-017-1536-0},
}

@article {giansingset,
    AUTHOR = {Gianniotis, Panagiotis},
     TITLE = {The size of the singular set of a type {I} {R}icci flow},
   JOURNAL = {J. Geom. Anal.},
  FJOURNAL = {Journal of Geometric Analysis},
    VOLUME = {27},
      YEAR = {2017},
    NUMBER = {4},
     PAGES = {3099--3119},
      ISSN = {1050-6926},
   MRCLASS = {53C44 (35K55 58J35)},
  MRNUMBER = {3708007},
MRREVIEWER = {Valentina-Mira Wheeler},
       DOI = {10.1007/s12220-017-9797-0},
       URL = {https://doi-org.proxy.library.cornell.edu/10.1007/s12220-017-9797-0},
}

@article {gianquant,
    AUTHOR = {Gianniotis, Panagiotis},
     TITLE = {Regularity theory for type {I} {R}icci flows},
   JOURNAL = {Calc. Var. Partial Differential Equations},
  FJOURNAL = {Calculus of Variations and Partial Differential Equations},
    VOLUME = {58},
      YEAR = {2019},
    NUMBER = {6},
     PAGES = {Paper No. 200, 24},
      ISSN = {0944-2669},
   MRCLASS = {53E20 (58J35)},
  MRNUMBER = {4029728},
MRREVIEWER = {Weimin Sheng},
       DOI = {10.1007/s00526-019-1644-7},
       URL = {https://doi-org.proxy.library.cornell.edu/10.1007/s00526-019-1644-7},
}

@article {jiangnaberl2,
    AUTHOR = {Jiang, Wenshuai and Naber, Aaron},
     TITLE = {{$L^2$} curvature bounds on manifolds with bounded {R}icci
              curvature},
   JOURNAL = {Ann. of Math. (2)},
  FJOURNAL = {Annals of Mathematics. Second Series},
    VOLUME = {193},
      YEAR = {2021},
    NUMBER = {1},
     PAGES = {107--222},
      ISSN = {0003-486X},
   MRCLASS = {53B20 (35A21 53C21)},
  MRNUMBER = {4199730},
MRREVIEWER = {Nan Li},
       DOI = {10.4007/annals.2021.193.1.2},
       URL = {https://doi-org.proxy.library.cornell.edu/10.4007/annals.2021.193.1.2},
}

@article {klott1,
    AUTHOR = {Kleiner, Bruce and Lott, John},
     TITLE = {Singular {R}icci flows {I}},
   JOURNAL = {Acta Math.},
  FJOURNAL = {Acta Mathematica},
    VOLUME = {219},
      YEAR = {2017},
    NUMBER = {1},
     PAGES = {65--134},
      ISSN = {0001-5962},
   MRCLASS = {53C44},
  MRNUMBER = {3765659},
       DOI = {10.4310/ACTA.2017.v219.n1.a4},
       URL = {https://doi.org/10.4310/ACTA.2017.v219.n1.a4},
}

@article {szek2,
    AUTHOR = {Liu, Gang and Sz\'{e}kelyhidi, G\'{a}bor},
     TITLE = {Gromov-{H}ausdorff limits of {K}\"{a}hler manifolds with {R}icci
              curvature bounded below {II}},
   JOURNAL = {Comm. Pure Appl. Math.},
  FJOURNAL = {Communications on Pure and Applied Mathematics},
    VOLUME = {74},
      YEAR = {2021},
    NUMBER = {5},
     PAGES = {909--931},
      ISSN = {0010-3640},
   MRCLASS = {53C55 (53C21)},
  MRNUMBER = {4230063},
       DOI = {10.1002/cpa.21900},
       URL = {https://doi.org/10.1002/cpa.21900},
}
\vspace{ 6 mm}
\noindent M. Hallgren, Department of Mathematics, Cornell University, Ithaca NY 14850\\
\textit{Email:} meh249@cornell.edu\\

\noindent W. Jian, Institute of Mathematics, Academy of Mathematics and Systems Science, Chinese Academy of Sciences, Beijing, 100190, China.\\
\textit{Email:} wangjian@amss.ac.cn\\

\end{document}